\newtheorem{assumption}{Assumption}
\DeclareMathOperator*{\argmin}{argmin}
\begin{document}

\title{Moreau Envelope Augmented Lagrangian Method for Nonconvex Optimization with Linear Constraints%\thanks{Grants or other notes
%about the article that should go on the front page should be
%placed here. General acknowledgments should be placed at the end of the article.}
\thanks{We thank Kaizhao Sun for discussions that help us complete this paper, as well as presenting to us an additional approach to ensure boundedness. The work of J. Zeng is partly supported by National Natural Science Foundation of China (No. 61977038) and the Thousand Talents Plan of Jiangxi Province
(No. jxsq2019201124). The work of D.-X. Zhou is partly supported by Research Grants Council of Hong Kong (No. CityU 11307319), Laboratory for AI-powered Financial Technologies, and the Hong Kong Institute
for Data Science.}
}
%\subtitle{Do you have a subtitle?\\ If so, write it here}

%\titlerunning{Short form of title}        % if too long for running head

\author{Jinshan Zeng \and
        Wotao Yin \and
        Ding-Xuan Zhou %etc.
}

%\authorrunning{Short form of author list} % if too long for running head

\institute{J. Zeng \at
              School of Computer and Information Engineering, Jiangxi Normal University, Nanchang, China.\\
              Liu Bie Ju Centre for Mathematical Sciences, City University of Hong Kong, Hong Kong.\\
              \email{jinshanzeng@jxnu.edu.cn}           %  \\
%             \emph{Present address:} of F. Author  %  if needed
           \and
           W. Yin \at
           Department of Mathematics, University of California, Los Angeles, CA. \\
           \email{wotaoyin@math.ucla.edu}
           \and
           D.X. Zhou \at
           School of Data Science, Department of Mathematics, and Liu Bie Ju Centre for Mathematical Sciences, City University of Hong Kong, Hong Kong. \\
           \email{mazhou@cityu.edu.hk}
}

\date{Received: date / Accepted: date}
% The correct dates will be entered by the editor

\maketitle

\begin{abstract}
The augmented Lagrangian method (ALM) is one of the most useful methods for constrained optimization. Its convergence has been well established under convexity assumptions or smoothness assumptions, or under both assumptions. ALM may experience oscillations and divergence when the underlying problem is simultaneously nonconvex and nonsmooth. In this paper, we consider the linearly constrained problem with a nonconvex (in particular, weakly convex) and nonsmooth objective. We modify ALM to use a Moreau envelope of the augmented Lagrangian and establish its convergence under conditions that are weaker than those in the literature. We call it the \textit{Moreau envelope augmented Lagrangian (MEAL)} method. We also show that the iteration complexity of MEAL is $o(\varepsilon^{-2})$ to yield an $\varepsilon$-accurate first-order stationary point. We establish its whole sequence convergence (regardless of the initial guess) and a rate when a Kurdyka-{\L}ojasiewicz property is assumed. Moreover, when the subproblem of MEAL has no closed-form solution and is difficult to solve, we propose two practical variants of MEAL, an inexact version called \textit{iMEAL} with an approximate proximal update, and a linearized version called \textit{LiMEAL} for the constrained problem with a composite objective. Their convergence is also established.

\keywords{Nonconvex nonsmooth optimization \and augmented Lagrangian method \and Moreau envelope \and proximal augmented Lagrangian method \and Kurdyka-{\L}ojasiewicz inequality
%First keyword \and Second keyword \and More
}
% \PACS{PACS code1 \and PACS code2 \and more}
% \subclass{MSC code1 \and MSC code2 \and more}
\end{abstract}

%\section{Introduction}
%\label{intro}
%Your text comes here. Separate text sections with
%\section{Section title}
%\label{sec:1}
%Text with citations \cite{Armand14}\cite{RefB} and \cite{RefJ}.
%\subsection{Subsection title}
%\label{sec:2}
%as required. Don't forget to give each section
%and subsection a unique label (see Sect.~\ref{sec:1}).
%\paragraph{Paragraph headings} Use paragraph headings as needed.
%\begin{equation}
%a^2+b^2=c^2
%\end{equation}

\section{Introduction}

In this paper, we consider the following optimization problem with linear constraints
\begin{equation}
\label{Eq:problem}
\begin{array}{ll}
\mathrm{minimize}_{x\in \mathbb{R}^n} & f(x) \\
\mathrm{subject \ to} & Ax=b,
\end{array}
\end{equation}
where
$f: \mathbb{R}^n \rightarrow \mathbb{R}$ is a proper, lower-semicontinuous \textit{weakly convex} function, which is possibly nonconvex
%(in particular, \textit{weakly convex})
and nonsmooth,
$A\in \mathbb{R}^{m\times n}$ and $b\in \mathbb{R}^m$ are some given matrix and vector, respectively.
A function $f$ is said to be \textit{weakly convex} with a modulus $\rho>0$ if $f(x)+\frac{\rho}{2}\|x\|^2$ is convex on $\mathbb{R}^n$,
where $\|\cdot\|$ is the Euclidean norm.
The class of weakly convex functions is broad \cite{Nurminskii73}, including all convex functions, smooth but nonconvex functions with Lipschitz continuous gradient, and their composite forms (say, $f(x)=h(x)+g(x)$ with both $h$ and $g$ being weakly convex, and $f(x)=g(h(x))$ with $g$ being convex and Lipschitz continuous and $h$ being a smooth mapping with Lipschitz Jacobian \cite[Lemma 4.2]{Drusvyatskiy-Paquette19}).

The augmented Lagrangian method (ALM) is a well-known algorithm for constrained optimization by Hestenes \cite{Hestenes69} and Powell \cite{Powell69}.
ALM has been extensively studied and has a large body of literature (\cite{Bertsekas73,Birgin10,Conn91,Conn96,Rockafellar73-ALM} just to name a few),
yet \emph{no ALM algorithm can solve the underlying problem (\ref{Eq:problem}) without at least one of the following assumptions}: convexity \cite{Bertsekas73,Bertsekas76,Fernadez12,Polyak-Tretyakov73,Rockafellar73-ALM}, or smoothness \cite{Andreani08,Andreani10,Andreani19,Andreani18,Curtis15}, or solving nonconvex subproblems to their global minima \cite{Birgin10,Birgin18}, or an auto-updated penalty sequence staying bounded on the problem at hand \cite{Birgin20,Grapiglia-Yuan19}.
Indeed, without these assumptions, ALM may oscillate and even diverge unboundedly on simple quadratic programs~\cite{Wang19,Zhang-Luo18} on weakly convex objectives. An example is given Sec. \ref{sc:exp1} below.

At a high level, we introduce a Moreau-envelope modification of the ALM for solving \eqref{Eq:problem} and show the method can converge under weaker conditions. %Our main results are given in Theorem \ref{Theorem:Convergence-MEAL} and Proposition \ref{Proposition:globalconv-MEAL} below.
In particular, convexity is relaxed to weak convexity; nonsmooth functions are allowed; the subproblems can be solved inexactly to some extent; linearization can be applied to the Lipschitz-differential function in the objective; and, there is no assumption on the rank of $A$. On the other hand, we introduce two alternative subgradient properties in Definition \ref{Def:implicit-Lip-bounded-subgrad} below
%\footnote{In this paper, we use either certain \textit{implicit Lipschitz subgradient property}  or \textit{implicit bounded subgradient property} (see, Definition \ref{Def:implicit-Lip-bounded-subgrad}) to yield the convergence of proposed methods, where the \textit{implicit Lipschitz subgradient property} is weaker than the smoothness assumptions used in the literature.}
as our main assumption. By also assuming either a bounded energy sequence or bounded primal-dual sequence, we derive certain subsequence rates of convergence.
%We do not first establish boundedness since there are multiple ways to establish it.
We introduce a novel way to establish those boundedness properties based on a feasible coercivity assumption and a local-stability assumption on the subproblem. Finally, with the additional assumption of Kurdyka-{\L}ojasiewicz (K{\L}) inequality, we establish global convergence.
Overall, this paper shows that the Moreau envelope technique makes ALM applicable to more problems.
\subsection{Proposed Algorithms}
\label{sc:algorithms}
To present our algorithm, define the augmented Lagrangian:
\begin{equation}
\label{Eq:augmented-Lagrangian}
{\cal L}_{\beta} (x,\lambda) := f(x)+\langle \lambda, Ax-b \rangle + \frac{\beta}{2}\|Ax-b\|^2,
\end{equation}
and the \emph{Moreau envelope}
%\footnote{The Moreau envelope is built over the primal variable $x$ for each fixed dual variable $\lambda$.% is fixed when updating the primal variable.}
of ${\cal L}_{\beta}(x,\lambda)$:
\begin{equation}
\label{Eq:Moreauenvelope_AL}
\phi_{\beta}(z,\lambda) = \min_{x} \left\{{\cal L}_{\beta}(x,\lambda)+\frac{1}{2\gamma}\|x-z\|^2\right\},
\end{equation}
where $\lambda \in \mathbb{R}^m$ is a multiplier vector, $\beta>0$ is a penalty parameter, and $\gamma>0$ is a proximal parameter. The Moreau envelope applies to the primal variable $x$ for each fixed dual variable $\lambda$.

We introduce \textit{Moreau Envelope Augmented Lagrangian method} (dubbed \textit{MEAL}) as follows: given an initialization $(z^0,\lambda^0)$, $\gamma>0$, a sequence of penalty parameters $\{\beta_k\}$ and a step size $\eta \in (0,2)$, for $k=0,1,\ldots,$ run
\begin{equation}
\label{alg:MEAL}
\mathrm{(MEAL)} \quad
\left\{
\begin{array}{l}
 z^{k+1} = z^k - \eta \gamma \nabla_z \phi_{\beta_k}(z^k,\lambda^k),\\
 \lambda^{k+1} = \lambda^k + \beta_k \nabla_\lambda \phi_{\beta_k}(z^k,\lambda^k).
\end{array}
\right.
\end{equation}
The penalty parameter $\beta_k$ can either vary or be fixed.

Introduce
\begin{equation*}
x^{k+1}= \mathrm{Prox}_{\gamma,{\cal L}_{\beta_k}(\cdot,\lambda^{k})}(z^k) := \argmin_x \left\{ {\cal L}_{\beta_k}(x,\lambda^k)+\frac{1}{2\gamma}\|x-z^k\|^2\right\}, \ \forall k\in \mathbb{N},
\end{equation*}
which yields $\nabla_z \phi_{\beta_k}(z^k,\lambda^k) = \gamma^{-1} (z^k-x^{k+1})$ and $\nabla_\lambda \phi_{\beta_k}(z^k,\lambda^k) = A x^{k+1} - b$.
Then, MEAL \eqref{alg:MEAL} is equivalent to:
\begin{equation}
\label{alg:MEAL-reformulation}
\mathrm{(MEAL\ Reformulated)} \quad
\left\{
\begin{array}{l}
x^{k+1} = \mathrm{Prox}_{\gamma,{\cal L}_{\beta_k}(\cdot,\lambda^{k})}(z^k),\\
z^{k+1} = z^k -\eta (z^k - x^{k+1}),\\
\lambda^{k+1} = \lambda^k + \beta_k (Ax^{k+1}-b).
\end{array}
\right.
\end{equation}
Next, we provide two practical variants of MEAL that do not require an accurate computation of $\mathrm{Prox}_{\gamma,{\cal L}_{\beta}}$.

\paragraph{Inexact MEAL (iMEAL)}
We call $x^{k+1}$ an $\epsilon_k$-accurate stationary point of the $x$-subproblem in \eqref{alg:MEAL-reformulation} if there exists
\begin{equation}\label{iMealCond}
s^k \in \partial_x {\cal L}_{\beta_k}(x^{k+1},\lambda^k) + \gamma^{-1}(x^{k+1}-z^k)\quad\text{such that}~
\|s^k\| \leq \epsilon_k.
\end{equation}
\textit{iMEAL} is described as follows: given an initialization $(z^0,\lambda^0)$, $\gamma>0$, $\eta \in (0, 2)$, and  two positive sequences $\{\epsilon_k\}$ and $\{\beta_k\}$, for $k=0,1,\ldots,$ run
\begin{equation}
\label{alg:iMEAL}
\mathrm{(iMEAL)} \quad
\left\{
\begin{array}{l}
\mathrm{find \ an} \ x^{k+1} \ \mathrm{to\ satisfy} \ \eqref{iMealCond},\\
z^{k+1} = z^k -\eta (z^k - x^{k+1}),\\
\lambda^{k+1} = \lambda^k + \beta_k (Ax^{k+1}-b).
\end{array}
\right.
\end{equation}

\paragraph{Linearized MEAL (LiMEAL)}
When problem \eqref{Eq:problem} has the following form
\begin{equation}
\label{Eq:problem-CP}
\begin{array}{ll}
\mathop{\mathrm{minimize}}_{x\in \mathbb{R}^n} & f(x):= h(x) + g(x)\\
\mathrm{subject \ to} & Ax=b,
\end{array}
\end{equation}
where $h:\mathbb{R}^n \rightarrow \mathbb{R}$ is Lipschitz-continuous differentiable and $g:\mathbb{R}^{n} \rightarrow \mathbb{R}$ is weakly convex and has an easy proximal operator (in particular, admitting a closed-form solution) \cite{Hajinezhad-Hong19,Wang19,Xu-Yin-BCD13,Zeng-DGD18},
we shall use $\nabla h$.
Write $f^k(x):= h(x^k)+\langle \nabla h(x^k), x - x^k\rangle + g(x)$
and
$
{\cal L}_{\beta,{f^k}}(x,\lambda):= f^k(x)+\langle \lambda, Ax-b \rangle + \frac{\beta}{2}\|Ax-b\|^2.
$
We describe
\textit{LiMEAL} for \eqref{Eq:problem-CP} as: given $(z^0,\lambda^0)$, $\gamma>0$, $\eta \in (0,2)$ and $\{\beta_k\}$, for $k=0,1,\ldots,$ run
\begin{equation}
\label{alg:LiMEAL}
\mathrm{(LiMEAL)} \quad
\left\{
\begin{array}{l}
x^{k+1} = \mathrm{Prox}_{\gamma,{\cal L}_{\beta_k,{f^k}}(\cdot,\lambda^k)}(z^k),\\
z^{k+1} = z^k - \eta(z^k-x^{k+1}),\\
\lambda^{k+1} = \lambda^k + \beta_k (Ax^{k+1}-b).
\end{array}
\right.
\end{equation}
Since one can choose to use $h$ or not in LiMEAL, LiMEAL is more general than MEAL.

\subsection{Relation to ALM and Proximal ALM}
\label{sc:relation-existing-methods}

Like ALM, MEAL alternatively updates primal and dual variables; but unlike ALM, MEAL applies the update to the Moreau envelope of augmented Lagrangian.
By \cite{Rockafellar-var97}, the Moreau envelope $\phi_{\beta_k}(z,\lambda^k)$ provides a smooth approximation of ${\cal L}_{\beta_k}(x,\lambda^k)$ from below and shares the same minima.
The smoothness of Moreau envelope alleviates the possible oscillation that arises when ALM is applied to certain nonconvex optimization problems. %; see the example in \cite[Proposition 1]{Wang19}.

For the problems satisfying the conditions in this paper, ALM may require a sequence of possibly unbounded $\{\beta_k\}$. When $\beta_k$ is large, the ALM subproblem is ill-conditioned.
Therefore, bounding $\beta_k$ is practically desirable \cite{Birgin-book14,Conn91}.
MEAL and its practical variants can use a fixed penalty parameter under a novel subgradient assumption in Definition \ref{Def:implicit-Lip-bounded-subgrad} later.

Proximal ALM was introduced in \cite{Rockafellar76-PALM}. Its variants were recently studied in \cite{Hajinezhad-Hong19,Hong17-Prox-PDA,Zhang-Luo20,Zhang-Luo18}.
These methods
add a proximal term to the augmented Lagrangian.
Under the reformulation \eqref{alg:MEAL-reformulation},
proximal ALM \cite{Rockafellar76-PALM} for problem \eqref{Eq:problem} is a special case of MEAL with the step size $\eta =1$. %and a sequence of nondecreasing penalty parameters.
In \cite{Hong17-Prox-PDA}, a proximal primal-dual algorithm called \textit{Prox-PDA} was proposed for problem \eqref{Eq:problem}.
Certain non-Euclidean matrix norms were adopted in Prox-PDA to guarantee the strong convexity of the ALM subproblem.
A proximal linearized version of Prox-PDA for the composite optimization problem \eqref{Eq:problem-CP} was studied in \cite{Hajinezhad-Hong19}. These methods are closely related to MEAL, but their convergence conditions in the literature are stronger.

Recently, \cite{Zhang-Luo20,Zhang-Luo18} modified proximal inexact ALM for the linearly constrained problems with an additional bounded box constraint set or polyhedral constraint set, denoted by ${\cal C}$. Our method is partially motivated by their methods.
Their problems are equivalent to the composite optimization problems \eqref{Eq:problem-CP} with $g(x) = \iota_{\cal C}(x)$, where $\iota_{\cal C}(x)=0$ when $x\in {\cal C}$ and $+\infty$ otherwise.
In this setting, the methods in \cite{Zhang-Luo20,Zhang-Luo18} can be regarded as prox-linear versions of LiMEAL \eqref{alg:LiMEAL}, that is, yielding $x^{k+1}$ via a prox-linear scheme \cite{Xu-Yin-BCD13} instead of the minimization scheme as used in LiMEAL \eqref{alg:LiMEAL}, together with an additional dual step size and a sufficiently small primal step size in \cite{Zhang-Luo20,Zhang-Luo18}. Specifically, in the case of $g(x) = \iota_{\cal C}(x)$, the updates of $x^{k+1}$ in methods in \cite{Zhang-Luo20,Zhang-Luo18} are yielded by
\begin{align*}
    x^{k+1} = \mathrm{Proj}_{\cal C}(x^k - s\nabla K(x^k,z^k,\lambda^k)),
\end{align*}
where $K(x^k,z^k,\lambda^k)) = {\cal L}_{\beta^k,f}(x,\lambda^k) + \frac{1}{2\gamma}\|x-z^k\|^2$, and $\mathrm{Proj}_{\cal C}(x)$ is the projection of $x$ onto ${\cal C}$.
Besides the difference, LiMEAL can handle proximal functions beyond the indicator function and permits a wider choice $\eta \in (0,2)$.

\subsection{Other Related Literature}

On convex and constrained problems,
locally linear convergence\footnote{Locally linear convergence means exponentially fast convergence to a local minimum from a sufficiently close initial point.} of ALM has been extensively studied in the literature \cite{Bertsekas73,Bertsekas76,Bertsekas82,Conn00,Fernadez12,Nocedal99,Polyak-Tretyakov73}, mainly under the second order sufficient condition (SOSC) and constraint conditions such as the linear independence constraint qualification (LICQ).
Global convergence (i.e., convergence regardless of the initial guess) of ALM and its variants were studied in~\cite{Andreani07,Armand17,Birgin05,Birgin12,Birgin10,Conn91,Conn96,Rockafellar73-ALM,Tretykov73}, mainly under constraint qualifications and assumed boundedness of nondecreasing penalty parameters.
On nonconvex and constrained problems, convergence of ALM was recently studied in \cite{Andreani08,Andreani10,Andreani19,Andreani18,Birgin10,Birgin18,Curtis15}, mainly under the following assumptions: solving nonconvex subproblems to their approximate global minima or stationary points \cite{Birgin10,Birgin18}, or
%(observing)
boundedness of the nondecreasing penalty sequence \cite{Birgin20,Grapiglia-Yuan19}.
Most of them require \textit{Lipschitz differentiability} of the objective.

Convergence of proximal ALM and its variants was established under the assumptions of either convexity in \cite{Rockafellar76-PALM} or smoothness (in particular, Lipschitz differentiablity) in \cite{Hajinezhad-Hong19,Hong17-Prox-PDA,Jiang19,Xie-Wright19,Zhang-Luo20,Zhang-Luo18}.
Besides proximal ALM, other related works for nonconvex and constrained problems include \cite{Bian15,Haeser19,Nouiehed18,ONeill20}, which also assume smoothness of the objective, plus either gradient or Hessian information.

\subsection{Contribution and Novelty}
MEAL, iMEAL and LiMEAL achieve the same order of iteration complexity $o({\varepsilon^{-2}})$ to reach an $\varepsilon$-accurate first-order stationary point, slightly better than those in the ALM literature~\cite{Hajinezhad-Hong19,Hong17-Prox-PDA,Xie-Wright19,Zhang-Luo18,Zhang-Luo20} while also requiring weaker conditions. Our methods have convergence guarantees for a broader class of objective functions, for example, nonsmooth and nonconvex functions like %$|x^2-1|$, $|xy-1|$ (where $x,y\in \mathbb{R}$),
the smoothly clipped absolute deviation (SCAD) regularization \cite{Fan-SCAD} and minimax concave penalty (MCP) regularization \cite{Zhang-MCP},  which are underlying the applications of %low-rank matrix/tensor factorization, phase retrieval, blind deconvolution, robust principal component analysis,
statistical learning and beyond \cite{Wang19}. %\cite{Davis-Drusvyatskiy19,Drusvyatskiy18,Wang19}.

Note that we only assume the feasibility of $Ax=b$, which is weaker than the commonly-used hypotheses such as: the strict complementarity condition in \cite{Zhang-Luo18},  certain rank assumption (such as $\mathrm{Im}(A)\subseteq \mathrm{Im}(B)$ when considering the two- (multi-) block case $Ax+By=0$) in \cite{Wang19}, and the linear independence constrained qualification (LICQ) in \cite{Bertsekas82,Nocedal99} (which implies the full-rank assumption in the linear constraint case).

Our analysis is noticeably different from those in the literature~\cite{Rockafellar76-PALM,Hajinezhad-Hong19,Hong17-Prox-PDA,Jiang19,Zhang-Luo18,Zhang-Luo20,Xie-Wright19,Wang19}. We base our analysis on new potential functions. The Moreau envelope in the potential functions is partially motivated by~\cite{Davis-Drusvyatskiy19}. Our overall potential functions are new and tailored for MEAL, iMEAL, and LiMEAL and include the augmented Lagrangian with additional terms. The technique of analysis may have its own value for further generalizing and improving ALM-type methods.

\subsection{Notation and Organization}

We let $\mathbb{R}$ and $\mathbb{N}$ denote the sets of real and natural numbers, respectively. Given a matrix $A$, $\mathrm{Im}(A)$ denotes its image, and $\tilde{\sigma}_{\min}(A^TA)$ denotes the smallest positive eigenvalue of $A^TA$. $\|\cdot\|$ is the Euclidean norm for a vector.
Given any two nonnegative sequences $\{\xi_k\}$ and $\{\zeta_k\}$, we write $\xi_k = o(\zeta_k)$ if $\lim_{k\rightarrow \infty} \frac{\xi_k}{\zeta_k}=0$, and $\xi_k = {\cal O}(\zeta_k)$ if there exists a positive constant $c$ such that $\xi_k \leq c \zeta_k$ for all sufficiently large $k$.

In the rest of this paper,
Section \ref{sc:preliminary} presents background and preliminary techniques.
Section \ref{sc:convergence-MEAL} states convergence results of MEAL and iMEAL.
Section \ref{sc:LiMEAL} presents the results of LiMEAL.
Section \ref{sc:main-proofs} includes main proofs.
Section \ref{sc:discussion} provides sufficient conditions for certain boundedness assumptions in above results along with comparisons with the related work. Section \ref{sc:experiment} provides some numerical experiments to demonstrate the effectiveness of proposed methods.
We conclude this paper in Section \ref{sc:conclusion}.

\section{Background and Preliminaries}
\label{sc:preliminary}

This paper uses extended-real-valued functions, for example, $h:\mathbb{R}^n \to \mathbb{R} \cup \{+\infty\}$.
Write the domain of $h$ as $\mathrm{dom}(h):=\{x\in \mathbb{R}^n: h(x)<+\infty\}$ and its range as $\mathrm{ran}(h):= \{y: y=h(x), \forall x\in \mathrm{dom}(h)\}$.
For each $x\in \mathrm{dom}(h)$, the \textit{Fr\'{e}chet subdifferential} of $h$ at $x$, written as $\widehat{\partial}h(x)$, is the set of vectors $v\in \mathbb{R}^n$ satisfying
\[
\liminf_{u\neq x, u\rightarrow x} \ \frac{h(u)-h(x)-\langle v,u-x\rangle}{\|x-u\|} \geq 0.
\]
When $x\notin \mathrm{dom}(h),$ we define
%write
$\widehat{\partial} h(x) = \emptyset.$
The \emph{limiting-subdifferential} (or simply \emph{subdifferential}) of $h$~\cite{Mordukhovich-2006} at $x\in \mathrm{dom}(h)$ is defined
%written
as
\begin{equation}
\label{Def:limiting-subdifferential}
\partial h(x) := \{v\in \mathbb{R}^n: \exists x^t \to x,\; h(x^t)\to h(x), \;  \widehat{\partial} h(x^t) \ni v^t \to v\}.
\end{equation}
A necessary (but not sufficient) condition for $x\in \mathbb{R}^n$ to be a minimizer of $h$ is $0 \in \partial h(x)$.
A point that satisfies this inclusion is called \textit{limiting-critical} or simply \textit{critical}.
The distance between a point $x$ and a subset ${\cal S}$ of $\mathbb{R}^n$ is defined
%written
as $\mathrm{dist}(x, {\cal S}) = \inf_u \{\|x-u\|: u\in {\cal S}\}$.
%We say $h$ is $\rho$-weakly convex if the function $h(x)+\frac{\rho}{2}\|x\|^2$ is convex.

\subsection{Moreau Envelope}
\label{sc:moreau-envelope}

Given a function $h: \mathbb{R}^n \rightarrow \mathbb{R}$, define its \textit{Moreau envelope} \cite{Moreau65,Rockafellar-var97}:
\begin{equation}
\label{Eq:Moreau-envelope}
{\cal M}_{\gamma,h}(z) = \min_{x} \left\{ h(x) + \frac{1}{2\gamma}\|x-z\|^2\right\},
\end{equation}
where $\gamma>0$ is a parameter. Define its associated proximity operator
\begin{equation}
\label{Eq:prox-operator}
\mathrm{Prox}_{\gamma,h}(z) = \argmin_{x} \left\{h(x) + \frac{1}{2\gamma}\|x-z\|^2\right\}.
\end{equation}
If $h$ is $\rho$-weakly convex and $\gamma\in (0,\rho^{-1})$, then $\mathrm{Prox}_{\gamma,h}$ is monotone, single-valued, and Lipschitz, and ${\cal M}_{\gamma,h}$ is differentiable with
\begin{equation}
\label{Eq:Moreau-gradient}
\nabla {\cal M}_{\gamma,h}(z) = \gamma^{-1}\left(z-\mathrm{Prox}_{\gamma,h}(z)\right)\in \partial h(\mathrm{Prox}_{\gamma,h}(z));
\end{equation}
see \cite[Proposition 13.37]{Rockafellar-var97}.
From~\cite{Drusvyatskiy18,Drusvyatskiy-Paquette19}, we also have
\begin{align*}
&{\cal M}_{\gamma,h}(\mathrm{Prox}_{\gamma,h}(z)) \leq h(z), \nonumber\\
&\|\mathrm{Prox}_{\gamma,h}(z)-z\| = \gamma \|\nabla {\cal M}_{\gamma,h}(z)\|, \nonumber\\
&\mathrm{dist}(0,\partial h(\mathrm{Prox}_{\gamma,h}(z))) \leq \|\nabla {\cal M}_{\gamma,h}(z)\|. %\label{Eq:nearly-stationary}
\end{align*}
The first relation above presents Moreau envelope as a smooth lower approximation of $h$. By the second and third relations, small $\|\nabla {\cal M}_{\gamma,h}(z)\|$ implies that $z$ is \textit{near} its proximal point $\mathrm{Prox}_{\gamma,h}(z)$ and $z$ is \textit{nearly stationary} for $h$ \cite{Davis-Drusvyatskiy19}.
Therefore, $\|\nabla {\cal M}_{\gamma,h}(z)\|$ can be used as a \textit{continuous stationarity measure}.
Hence, replacing the augmented Lagrangian with its Moreau envelope
not only generates a strongly convex subproblem
but also yields a stationarity measure.

\subsection{Implicit Regularity Properties}% Based on Moreau Envelope}
\label{sc:implicit-property}

Let $h$ be a proper, lower semicontinuous, $\rho$-weakly convex function.
Given a $\gamma \in (0,\rho^{-1})$, define the \textit{generalized inverse mapping} of $\mathrm{Prox}_{\gamma,h}$:
\begin{align}
\label{Eq:prox-operator-inverse}
\mathrm{Prox}_{\gamma,h}^{-1}(x):=\{w:\mathrm{Prox}_{\gamma,h}(w) = x\}, \quad \forall x\in \mathrm{ran}(\mathrm{Prox}_{\gamma,h}).
\end{align}
%Based on \eqref{Eq:prox-operator-inverse}, we introduce the following definitions, which impose implicitly some regularity properties including Lipschitz continuity and boundedness on subgradient of a weakly convex function.
In the definition below, we introduce two important regularity properties.
\begin{definition}
\label{Def:implicit-Lip-bounded-subgrad}
Let $h$ be a proper, lower semicontinuous and $\rho$-weakly convex function.
\begin{enumerate}
\item[(a)] We say $h$ satisfies the \textbf{implicit Lipschitz subgradient} property if for any $\gamma \in (0,\rho^{-1})$, there exists $L>0$ (depending on $\gamma$) such that for any $u,v\in \mathrm{ran}(\mathrm{Prox}_{\gamma,h})$,
    \[\|\nabla {\cal M}_{\gamma,h}(w) - \nabla {\cal M}_{\gamma,h}(w') \| \leq L\|u-v\|, \ \forall w\in \mathrm{Prox}_{\gamma,h}^{-1}(u), w'\in \mathrm{Prox}_{\gamma,h}^{-1}(v);\]

\item[(b)] We say $h$ satisfies the \textbf{implicit bounded subgradient} property if for any $\gamma \in (0,\rho^{-1})$, there exists $\hat{L}>0$ (depending on $\gamma$) such that for any $u\in \mathrm{ran}(\mathrm{Prox}_{\gamma,h})$,
    \[\|\nabla {\cal M}_{\gamma,h}(w)\| \leq \hat{L}, \ \forall w\in \mathrm{Prox}_{\gamma,h}^{-1}(u).\]
\end{enumerate}
\end{definition}
Since $\nabla {\cal M}_{\gamma,h}(x) \in \partial h(\mathrm{Prox}_{\gamma,h}(x))$ for any $x\in \mathbb{R}^n$,
we have $\nabla {\cal M}_{\gamma,h}(w) \in \partial h(u), \forall u \in \mathrm{ran}(\mathrm{Prox}_{\gamma,h})$ and $w\in \mathrm{Prox}_{\gamma,h}^{-1}(u)$.
Hence, the \textit{implicit Lipschitz subgradient} and \textit{implicit bounded subgradient} imply, respectively, the \textit{Lipschitz continuity} and \textit{boundedness} only on the components of $\partial h$ that are Moreau envelope gradients, but not on other components of $\partial h$.
When $h$ is differentiable, \textit{implicit Lipschitz subgradient} implies \textit{Lipschitz gradient}.
Having \textit{implicit bounded subgradients} is weaker than having bounded $\partial h$,
which is commonly assumed in the analysis of nonconvex algorithms (cf. \cite{Davis-Drusvyatskiy19,Hajinezhad-Hong19,Zeng-DGD18}). Nonsmooth and nonconvex functions like % $|x^2-1|$, $|xy-1|$, $x, y\in \mathbb{R}$,
the SCAD regularization and MCP regularization which appear in  statistical learning \cite{Wang19}, have \textit{implicit bounded subgradients}. %They have not been covered in the ALM literature.

\subsection{Kurdyka-{\L}ojasiewicz Inequality}
\label{sc:KL-ineq}

The Kurdyka-{\L}ojasiewicz (K{\L}) inequality \cite{Bolte-KL2007a,Bolte-KL2007b,Kurdyka-KL1998,Lojasiewicz-KL1963,Lojasiewicz-KL1993}
is a property that leads to global convergence of nonconvex algorithms in the literature (see, \cite{Attouch13,Bolte2014,Wang19,Xu-Yin-BCD13,Zeng-BCD19,Zeng-ADMM19}).
The following definition of Kurdyka-{\L}ojasiewicz property is adopted from \cite{Bolte-KL2007a}.

\begin{definition}
\label{Def-KLProp}
A function	$h:\mathbb{R}^n \rightarrow  \mathbb{R}\cup \{+\infty\}$ is said to have the {Kurdyka-{\L}ojasiewicz  property} at $x^*\in \mathrm{dom}(\partial h)$ if there exist a neighborhood ${\cal U}$ of $x^*$, a constant $\nu>0$, and a continuous concave function $\varphi(s) = cs^{1-\theta}$ for some $c>0$ and $\theta \in [0,1)$ such that the Kurdyka-{\L}ojasiewicz inequality holds: for all $x \in {\cal U} \cap \mathrm{dom}(\partial h)$ and $h(x^*) < h(x) < h(x^*)+\nu$,
\begin{equation}
\varphi'(h(x)-h(x^*)) \cdot\mathrm{dist}(0,\partial h(x))\geq 1, 	\label{Eq:KLIneq}
\end{equation}
(we use the conventions: $0^0=1, \infty/\infty=0/0=0$),
where $\theta$ is called the K{\L} exponent of $h$ at $x^*$. Proper lower semicontinuous functions satisfying the K{\L} inequality at every point of $\mathrm{dom}(\partial h)$ are called K{\L} functions.
\end{definition}
This property was firstly introduced by \cite{Lojasiewicz-KL1993} on real analytic functions \cite{Krantz2002-real-analytic} for $\theta \in \left[ \tfrac{1}{2},1\right) $, was then extended to functions defined on the o-minimal structure in \cite{Kurdyka-KL1998}, and was later extended to nonsmooth subanalytic functions in \cite{Bolte-KL2007a}.
K{\L} functions include real analytic functions \cite{Krantz2002-real-analytic},
semialgebraic functions \cite{Bochnak-semialgebraic1998},
tame functions defined in some o-minimal structures \cite{Kurdyka-KL1998}, continuous subanalytic functions \cite{Bolte-KL2007a}, definable functions \cite{Bolte-KL2007b}, locally strongly convex functions \cite{Xu-Yin-BCD13}, as well as many
deep-learning training models~\cite{Zeng-BCD19,Zeng-ADMM19}.

\section{Convergence of MEAL}
\label{sc:convergence-MEAL}

This section presents the convergence results of MEAL and iMEAL. We postpone their proofs to Section \ref{sc:main-proofs}.

\subsection{Assumptions and Stationarity Measure}
\label{sc:MEAL-assump}

\begin{assumption}
\label{Assump:feasibleset}
The set ${\cal X}:=\{x:Ax=b\}$ is nonempty.
\end{assumption}

\begin{assumption}
\label{Assump:MEAL}
The objective $f$ in problem \eqref{Eq:problem} satisfies:
\begin{enumerate}
\item[(a)] $f$ is proper lower semicontinuous and $\rho$-weakly convex; and for any $\gamma\in (0,\rho^{-1})$, \textbf{either (b) or (c):}
\item[(b)] $f$ satisfies the \textbf{implicit Lipschitz subgradient} property with a constant $L_f>0$ (possibly depending on $\gamma$); or,
\item[(c)] $f$ satisfies the \textbf{implicit bounded subgradient} property with a constant $\hat{L}_f>0$ (possibly depending on $\gamma$).
\end{enumerate}
\end{assumption}

We do not assume the following hypotheses:
the strict complementarity condition used in \cite{Zhang-Luo18},  any rank assumption (such as $\mathrm{Im}(A)\subseteq \mathrm{Im}(\mathrm{B})$ when considering the two- (multi-)block case $Ax+By=0$) used in \cite{Wang19}, the linear independence constrained qualification (LICQ) used in \cite{Bertsekas82,Nocedal99} (implying the full-rank assumption in the linear constraint case).
Assumption \ref{Assump:MEAL} is mild as discussed in Section \ref{sc:implicit-property}.
%and satisfied by some important nonconvex nonsmooth functions that have not been considered in the literature on ALM.

According to \eqref{Eq:Moreauenvelope_AL} and the update \eqref{alg:MEAL} of MEAL, we have
\begin{align}
\label{Eq:stationary-MEAL}
\nabla \phi_{\beta_k}(z^k,\lambda^k)=
\left(
\begin{array}{c}
(\eta\gamma)^{-1} (z^k - z^{k+1})\\
\beta_k^{-1}(\lambda^{k+1}-\lambda^k)
\end{array}
\right)
\in
\left(
\begin{array}{c}
\partial f(x^{k+1}) + A^T\lambda^{k+1}\\
Ax^{k+1}-b
\end{array}
\right).
\end{align}
Let
\begin{align}
\label{Eq:measure-MEAL}
\xi_{\mathrm{meal}}^k := \min_{0\leq t \leq k} \|\nabla \phi_{\beta_t}(z^t,\lambda^t)\| , \ \forall k\in \mathbb{N}.
\end{align}
Then according to \eqref{Eq:stationary-MEAL}, the bound $\xi_{\mathrm{meal}}^k \leq \varepsilon$ implies
\begin{align*}
\min_{0\leq t \leq k} \mathrm{dist}\left\{0,\left(
\begin{array}{c}
\partial f(x^{t+1}) + A^T\lambda^{t+1}\\
Ax^{t+1}-b
\end{array}
\right)\right\} \leq \xi^k_{\mathrm{meal}} \leq \varepsilon,
\end{align*}
that is, MEAL achieves $\varepsilon$-accurate first-order stationarity for problem \eqref{Eq:problem} within $k$ iterations.
Hence, $\xi_{\mathrm{meal}}^k$ is a valid stationarity measure of MEAL.
%Based on \eqref{Eq:measure-MEAL}, given an $\varepsilon$, the iteration complexity $T_{\varepsilon}$ used in this paper is
Define iteration complexity:
\begin{align}
    \label{Eq:itercomplexity-meal}
    T_{\varepsilon} = \inf\left\{t\geq 1: \|\nabla \phi_{\beta_t}(z^t,\lambda^t)\| \leq \varepsilon \right\}.
\end{align}
Comparing $T_{\varepsilon}$ to the common iteration complexity
\begin{align*}
    \hat{T}_{\varepsilon}= \inf\left\{t\geq 1: \mathrm{dist}(0,\partial f(x^t)+A^T\lambda^t) \leq \epsilon \ \text{and}\ \|Ax^t-b\| \leq \varepsilon \right\},
\end{align*}
we get $T_{\varepsilon}\ge \hat{T}_{\varepsilon}$.

If $f$ is differentiable, $\mathrm{dist}(0,\partial f(x^t)+A^T\lambda^t)$ reduces to $\|\nabla f(x^t)+A^T\lambda^t\|$.

\subsection{Convergence Theorems of MEAL}
\label{sc:MEAL-convergence}

We present the quantities used to state the convergence results of MEAL.
Let
\begin{align}
\label{Eq:function-P}
{\cal P}_{\beta}(x,z,\lambda) = {\cal L}_{\beta}(x,\lambda) + \frac{1}{2\gamma}\|x-z\|^2,
\end{align}
for some $\beta, \gamma>0.$
Then according to \eqref{alg:MEAL-reformulation}, MEAL can be interpreted as a primal-dual update with respect to ${\cal P}_{\beta_k}(x,z,\lambda)$ at the $k$-th iteration, that is, updating $x^{k+1}$, $z^{k+1}$, and $\lambda^{k+1}$ by minimization, gradient descent, and gradient ascent respectively.

Based on \eqref{Eq:function-P}, we introduce the following \text{Lyapunov functions} for MEAL:
\begin{align}
\label{Eq:Lyapunov-seq-MEAL-S1}
{\cal E}_{\mathrm{meal}}^k
:= {\cal P}_{\beta_k}(x^k,z^k,\lambda^k)+ 2\alpha_k \|z^k - z^{k-1}\|^2, \ \forall k\geq 1,
\end{align}
associated with the \textit{implicit Lipschitz subgradient} assumption and
\begin{align}
\label{Eq:Lyapunov-seq-MEAL-S2}
\tilde{\cal E}_{\mathrm{meal}}^k := {\cal P}_{\beta_k}(x^k,z^k,\lambda^k) + 3\alpha_k \|z^k-z^{k-1}\|^2, \ \forall k\geq 1,
\end{align}
associated with the \textit{implicit bounded subgradient} assumption,
where
\begin{align}
\label{Eq:alphak}
\alpha_k := \frac{\beta_k+\beta_{k+1} + \gamma\eta(1-\eta/2) }{2c_{\gamma,A}\beta_k^2}, \ \forall k \in \mathbb{N},
\end{align}
and $c_{\gamma,A}:= \gamma^2 \tilde{\sigma}_{\min}(A^TA)$.
When $\beta$ is fixed, we also fix
\begin{align}
\label{Eq:alpha}
\alpha := \frac{2\beta+\gamma \eta(1-\eta/2)}{2c_{\gamma,A}\beta^2}.
\end{align}

\begin{theorem}[Iteration Complexity of MEAL]
\label{Theorem:Convergence-MEAL}
Suppose that Assumptions \ref{Assump:feasibleset} and \ref{Assump:MEAL}(a) hold. Pick $\gamma \in (0,\rho^{-1})$ and $\eta \in (0,2)$.
Let $\{(x^k,z^k,\lambda^k)\}$ be a sequence generated by MEAL \eqref{alg:MEAL-reformulation}.
The following claims hold:
\begin{enumerate}
\item[(a)]
%Fix $\beta_k\equiv\beta$
Set $\beta$
sufficiently large such that in \eqref{Eq:alpha}, $\alpha < \min\left\{\frac{1-\gamma \rho}{4\gamma(1+\gamma L_f)^2},  \frac{1}{8\gamma}(\frac{2}{\eta}-1)\right\}$.
Under Assumption \ref{Assump:MEAL}(b), if $\{{\cal E}_{\mathrm{meal}}^k\}$ is lower bounded, then $\xi^k_{\mathrm{meal}} = o(1/\sqrt{k})$ for $\xi_{\mathrm{meal}}^k$ in \eqref{Eq:measure-MEAL}.

\item[(b)] Pick any $K\geq 1$. Set $\{\beta_k\}$ so that in \eqref{Eq:alphak},  $\alpha_k \equiv \frac{\alpha^*}{K}$ for some positive constant $\alpha^* \leq \min\left\{ \frac{1-\rho \gamma}{6\gamma}, \frac{1}{12\gamma}\left(\frac{2}{\eta} -1\right)\right\}$.
Under Assumption \ref{Assump:MEAL}(c), if $\{\tilde{\cal E}_{\mathrm{meal}}^k\}$ is lower bounded, then
$\xi_{\mathrm{meal}}^K \leq \tilde{c}_1/\sqrt{K}$ for some constant $\tilde{c}_1>0$.
\end{enumerate}
\end{theorem}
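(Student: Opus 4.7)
The plan is to derive a uniform one-step descent inequality for the Lyapunov sequence and then sum it to control the stationarity measure $\xi^k_{\mathrm{meal}}$. First I would decompose
\[
{\cal P}_{\beta_{k+1}}(x^{k+1},z^{k+1},\lambda^{k+1}) - {\cal P}_{\beta_k}(x^k,z^k,\lambda^k)
\]
into three pieces: an $x$-block change (from the proximal minimization), a $z$-block change (from the gradient step on the smooth Moreau envelope), and a $\lambda$-block change (from the dual ascent), plus a $\beta_k \to \beta_{k+1}$ correction. The $x$-block contributes a descent of order $-\tfrac{\gamma^{-1}-\rho}{2}\|x^{k+1}-x^k\|^2$ because ${\cal P}_{\beta_k}(\cdot,z^k,\lambda^k)$ is $(\gamma^{-1}-\rho)$-strongly convex; the $z$-update contributes a descent proportional to $(\tfrac{2}{\eta}-1)\|z^{k+1}-z^k\|^2/(2\gamma\eta)$; and the $\lambda$-update contributes an ascent of $\beta_k^{-1}\|\lambda^{k+1}-\lambda^k\|^2$. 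The crux is therefore to control this last ascent.

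The central lemma I would prove is a Moreau-envelope representation of the dual increment. Optimality of $x^{k+1}$ in the subproblem yields
\[
\gamma^{-1}(z^k-x^{k+1}) - A^{T}\lambda^{k+1} \in \partial f(x^{k+1}),
\]
which shows that $x^{k+1}=\mathrm{Prox}_{\gamma,f}(z^k-\gamma A^{T}\lambda^{k+1})$ and consequently
\[
\gamma^{-1}(z^k-x^{k+1}) - A^{T}\lambda^{k+1} = \nabla{\cal M}_{\gamma,f}\bigl(z^k-\gamma A^{T}\lambda^{k+1}\bigr).
\]
Subtracting the analogue at the previous iteration and using that $\lambda^{k+1}-\lambda^k=\beta_k(Ax^{k+1}-b)\in\mathrm{Im}(A)$ (which is legitimate by Assumption \ref{Assump:feasibleset} so that $b\in\mathrm{Im}(A)$), the bound $\|A^{T}v\|^2\ge\tilde\sigma_{\min}(A^{T}A)\|v\|^2$ for $v\in\mathrm{Im}(A)$ gives
\[
\|\lambda^{k+1}-\lambda^k\|^2 \leq \tfrac{2(1+\gamma L_f)^2}{c_{\gamma,A}}\|x^{k+1}-x^k\|^2 + \tfrac{2}{c_{\gamma,A}}\|z^k-z^{k-1}\|^2
\]
under the implicit Lipschitz subgradient property. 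Under the implicit bounded subgradient property, the same manipulation produces an extra additive constant proportional to $\hat L_f^2$.

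Next I would insert the correction term $2\alpha_k\|z^k-z^{k-1}\|^2$ (resp. $3\alpha_k\|z^k-z^{k-1}\|^2$) into the energy function precisely to absorb the $\|z^k-z^{k-1}\|^2$ residual produced above; the specific formula for $\alpha_k$ in \eqref{Eq:alphak} is chosen so that the telescoped coefficients of $\|x^{k+1}-x^k\|^2$ and $\|z^{k+1}-z^k\|^2$ remain strictly negative under the hypothesis on $\alpha$ (resp.\ $\alpha^*$). This yields the clean one-step descent
\[
{\cal E}_{\mathrm{meal}}^{k+1} - {\cal E}_{\mathrm{meal}}^k \leq -c_1\|x^{k+1}-x^k\|^2 - c_2\|z^{k+1}-z^k\|^2
\]
for some $c_1,c_2>0$ in part (a); for part (b) an additional $\mathcal{O}(\alpha_k)$ constant appears on the right-hand side. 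Telescoping and using the assumed lower boundedness of the Lyapunov sequence gives $\sum_{k\ge 0}\bigl(\|x^{k+1}-x^k\|^2+\|z^{k+1}-z^k\|^2\bigr)<\infty$ in part (a), and $\sum_{k=0}^{K-1}(\cdot)\leq C$ in part (b) where the $\mathcal{O}(K\alpha_k)=\mathcal{O}(\alpha^*)$ contribution is kept bounded precisely by the choice $\alpha_k\equiv\alpha^*/K$.

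Finally I would translate these sums into rates for $\xi_{\mathrm{meal}}^k$. From \eqref{Eq:stationary-MEAL}, $\|\nabla\phi_{\beta_k}(z^k,\lambda^k)\|$ is controlled by $\|z^{k+1}-z^k\|$ and $\beta_k^{-1}\|\lambda^{k+1}-\lambda^k\|$, the latter being dominated by $\|x^{k+1}-x^k\|$ and $\|z^k-z^{k-1}\|$ via the dual-increment lemma. In part (a), a standard monotonicity-plus-summability argument (a summable nonnegative sequence satisfies $\min_{t\le k}a_t=o(1/k)$) gives $\xi_{\mathrm{meal}}^k=o(1/\sqrt{k})$; in part (b) the finite sum bound of order $K$ directly yields $\min_{0\le t\le K}(\cdot)\le \tilde c_1/\sqrt{K}$. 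The main obstacle I anticipate is the careful bookkeeping when $\beta_k$ varies with $k$: the transition terms $(\beta_{k+1}-\beta_k)\|Ax^{k+1}-b\|^2/2$ and the $\alpha_k\to\alpha_{k+1}$ mismatch must be absorbed into the already-negative coefficients, which is exactly why the precise formulas \eqref{Eq:alphak} and the upper bound on $\alpha$ in the theorem's hypothesis appear, and why part (b) requires $\alpha_k\equiv\alpha^*/K$ rather than a summable choice.
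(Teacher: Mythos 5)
Your proposal is correct and follows essentially the same route as the paper: a one-step progress estimate for ${\cal P}_{\beta_k}$ decomposed over the $x$-, $z$-, and $\lambda$-updates, a ``dual increment controlled by primal increments'' lemma obtained from the Moreau-envelope representation $x^{k+1}=\mathrm{Prox}_{\gamma,f}(z^k-\gamma A^T\lambda^{k+1})$ together with $\lambda^{k+1}-\lambda^k\in\mathrm{Im}(A)$, absorption of the residual $\|z^k-z^{k-1}\|^2$ into the Lyapunov sequence, and the summability arguments for the two rates. The only cosmetic difference is that the paper injects the $\|\nabla\phi_{\beta_k}(z^k,\lambda^k)\|^2$ term directly into the one-step inequality via the identity $\|\nabla\phi_{\beta_k}\|^2=(\eta\gamma)^{-2}\|z^{k+1}-z^k\|^2+\beta_k^{-2}\|\lambda^{k+1}-\lambda^k\|^2$, whereas you convert at the end; both are equivalent.
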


Section \ref{sc:discussion-boundedness} provides conditions sufficient for the lower-boundedness assumptions. Let us interpret the theorem.
To achieve an $\varepsilon$-accurate stationary point, the iteration complexity of MEAL is  $o(\varepsilon^{-2})$ assuming the implicit Lipschitz subgradient property and ${\cal O}(\varepsilon^{-2})$ assuming the implicit bounded subgradient property.
Both iteration complexities are consistent with the existing results of ${\cal O}(\varepsilon^{-2})$ in~\cite{Hajinezhad-Hong19,Hong17-Prox-PDA,Xie-Wright19,Zhang-Luo20}.
The established results of MEAL also hold for proximal ALM by setting $\eta = 1$.
We note that it is not our goal to pursue any better complexity (e.g., using momentum) in this paper.

\begin{remark}
Let $\bar{\alpha}:= \min\left\{\frac{1-\gamma \rho}{4\gamma(1+\gamma L_f)^2},\frac{1}{8\gamma}(\frac{2}{\eta}-1)\right\}$. By \eqref{Eq:alpha}, the requirement $0<\alpha < \bar{\alpha}$ in Theorem \ref{Theorem:Convergence-MEAL}(a) is met by setting
\begin{align}
\label{Eq:cond-beta-MEAL-S1}
\beta > \frac{1+\sqrt{1+\eta(2-\eta)\gamma c_{\gamma,A}\bar{\alpha}}}{2c_{\gamma,A}\bar{\alpha}}.
\end{align}
Similarly, %we can derive a sufficient condition on $\beta_k$ to guarantee
the assumption $\alpha_k = \frac{\alpha^*}{K}$ in Theorem \ref{Theorem:Convergence-MEAL}(b) is met by setting
\begin{align}
\label{Eq:cond-beta-MEAL-S2}
\beta_k = \frac{K\left(1+\sqrt{1+\eta(2-\eta)\gamma c_{\gamma,A}\alpha^*/K}\right)}{2c_{\gamma,A}\alpha^*}, \ k=1,\ldots, K.
\end{align}
\end{remark}

Next, we establish global convergence (whole sequence convergence regardless of initial points) and its rate for MEAL under the K{\L} inequality (Definition \ref{Def-KLProp}).
Let
$\hat{z}^k := z^{k-1}$, $y^k:= (x^k,z^k,\lambda^k,\hat{z}^k), \ \forall k\geq 1,$
$y:= (x,z,\lambda,\hat{z}) \in \mathbb{R}^n \times \mathbb{R}^n \times \mathbb{R}^m \times \mathbb{R}^n,$ and
\begin{align}
\label{Eq:Lyapunov-fun-MEAL}
{\cal P}_{\mathrm{meal}}(y) := {\cal P}_{\beta}(x,z,\lambda) + 3 \alpha \|z-\hat{z}\|^2
\end{align}
where $\alpha$ is defined in \eqref{Eq:alpha}.

\begin{proposition}[Global convergence and rate of MEAL]
\label{Proposition:globalconv-MEAL}
Suppose that the assumptions required for Theorem \ref{Theorem:Convergence-MEAL}(a) hold and that $\{(x^k,z^k,\lambda^k)\}$ generated by MEAL \eqref{alg:MEAL-reformulation} is bounded.
If ${\cal P}_{\mathrm{meal}}$ satisfies the K{\L} property at some point $y^*:= (x^*,x^*,\lambda^*,x^*)$ with an exponent of $\theta \in [0,1)$, where $(x^*,\lambda^*)$ is a limit point of $\{(x^k,\lambda^k)\}$, then
\begin{enumerate}
\item[(a)] the whole sequence $\{\hat{y}^k:=(x^k,z^k,\lambda^k)\}$ converges to $\hat{y}^*:=(x^*,x^*,\lambda^*)$; and

\item[(b)] the following rate-of-convergence results hold: (1) if $\theta =0$, then $\{\hat{y}^k\}$ converges within a finite number of iterations; (2) if $\theta \in (0,\frac{1}{2}]$, then $\|\hat{y}^k- \hat{y}^*\| \leq c \tau^k$ for all $k\geq k_0$, for certain $k_0>0, c>0, \tau \in (0,1)$; and (3) if $\theta \in (\frac{1}{2},1)$, then $\|\hat{y}^k - \hat{y}^*\| \leq c k^{-\frac{1-\theta}{2\theta-1}}$ for all $k\geq k_0$, for certain $k_0>0, c>0$.
\end{enumerate}
\end{proposition}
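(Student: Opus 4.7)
The plan is to apply the standard Kurdyka–Łojasiewicz framework of Attouch–Bolte–Svaiter to the augmented iterate $y^k = (x^k,z^k,\lambda^k,\hat z^k)$ with Lyapunov function ${\cal P}_{\mathrm{meal}}$. Concretely, I would verify three ingredients on the augmented sequence: (i) a sufficient-decrease estimate ${\cal P}_{\mathrm{meal}}(y^{k+1}) - {\cal P}_{\mathrm{meal}}(y^k) \leq -c_1\bigl(\|x^{k+1}-x^k\|^2 + \|z^{k+1}-z^k\|^2\bigr)$ for some $c_1>0$; (ii) a relative error bound $\mathrm{dist}(0,\partial{\cal P}_{\mathrm{meal}}(y^{k+1})) \leq c_2\|y^{k+1}-y^k\|$; and (iii) continuity of ${\cal P}_{\mathrm{meal}}$ along convergent subsequences of $\{y^k\}$. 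Combined with boundedness of the iterates and the KL inequality at the limit $y^*$, these imply summability of $\{\|y^{k+1}-y^k\|\}$, hence convergence of the whole sequence; the rate then follows from the standard Łojasiewicz recursion driven by the exponent $\theta$.

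For (i), I would reuse the one-step descent already underlying Theorem \ref{Theorem:Convergence-MEAL}(a): the parameter condition $\alpha<\bar\alpha$ forces ${\cal E}_{\mathrm{meal}}^{k+1}-{\cal E}_{\mathrm{meal}}^k$ to control a full quadratic in the increments, and rewriting $\hat z^k=z^{k-1}$ transports the same estimate to ${\cal P}_{\mathrm{meal}}$. For (ii), I would assemble the four components of $\partial{\cal P}_{\mathrm{meal}}(y^{k+1})$: the $x$-component, from the optimality of the prox subproblem, is of order $\|z^{k+1}-z^k\|+\|\lambda^{k+1}-\lambda^k\|$; the $z$-component is $\gamma^{-1}(z^{k+1}-x^{k+1})+6\alpha(z^{k+1}-\hat z^{k+1})$, which by the $z$-update equals $\bigl(\gamma^{-1}(1-\eta^{-1})+6\alpha\bigr)(z^{k+1}-z^k)$; the $\lambda$-component is $Ax^{k+1}-b=\beta^{-1}(\lambda^{k+1}-\lambda^k)$; and the $\hat z$-component is $-6\alpha(z^{k+1}-z^k)$. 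The implicit Lipschitz subgradient property then bounds $\|\lambda^{k+1}-\lambda^k\|$ by a multiple of $\|x^{k+1}-x^k\|+\|z^{k+1}-z^k\|$ (this is the mechanism by which dual increments get absorbed), giving the required bound. For (iii), at any limit point $z^{k_j}\to z^\ast$ one has $z^\ast=x^\ast$ since $z^{k+1}-z^k=\eta(x^{k+1}-z^k)\to 0$ by the summability from (i); continuity of the quadratic terms is immediate, and $f(x^{k_j})\to f(x^\ast)$ follows by combining the lower semicontinuity of $f$ with the upper bound obtained from the prox-minimization optimality of $x^{k_j}$ tested against $x^\ast$.

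The main obstacle is ingredient (ii), specifically bounding $\|\lambda^{k+1}-\lambda^k\|$ by the primal increments without invoking any full-rank condition on $A$. This is where the implicit Lipschitz subgradient hypothesis does essential work, through the identity $A^T\lambda^{k+1}=-\nabla{\cal M}_{\gamma,f}(w^{k+1})$ (for an appropriate $w^{k+1}$ in the preimage) plus the $\tilde\sigma_{\min}(A^TA)$ factor inside $c_{\gamma,A}$; tracking the constants is precisely what forces $\alpha<\bar\alpha$. Once (i)--(iii) are in place, I would apply the uniformized KL inequality in a neighborhood of $y^\ast$, use concavity of $\varphi(s)=cs^{1-\theta}$ to write $\varphi'({\cal P}_{\mathrm{meal}}(y^k)-{\cal P}_{\mathrm{meal}}(y^\ast))\cdot c_2\|y^{k+1}-y^k\|\geq 1$, and combine with (i) to derive $\|y^{k+1}-y^k\|\leq C\bigl(\varphi({\cal P}_{\mathrm{meal}}(y^k)-{\cal P}_{\mathrm{meal}}(y^\ast))-\varphi({\cal P}_{\mathrm{meal}}(y^{k+1})-{\cal P}_{\mathrm{meal}}(y^\ast))\bigr)^{1/2}\|y^{k+1}-y^k\|^{1/2}$; Young's inequality and telescoping then yield $\sum_k\|y^{k+1}-y^k\|<\infty$, proving part (a). For part (b), I would apply the Łojasiewicz recursion on the tail sum $r_k:=\sum_{j\geq k}\|y^{j+1}-y^j\|$: the case $\theta=0$ forces the KL inequality to fail unless $y^k$ is eventually stationary, giving finite termination; $\theta\in(0,\tfrac12]$ yields $r_{k+1}\leq q\,r_k$ for some $q\in(0,1)$, i.e., linear convergence; and $\theta\in(\tfrac12,1)$ yields $r_{k+1}\leq r_k - c r_k^{2\theta/(1-\theta)\cdot?}$, whose integration produces the sublinear rate $k^{-(1-\theta)/(2\theta-1)}$, exactly matching the stated bounds.
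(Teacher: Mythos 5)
Your proposal follows essentially the same route as the paper: verify the sufficient-decrease, relative-error (bounded subgradient), and continuity conditions for ${\cal P}_{\mathrm{meal}}$ along the augmented iterate $y^k=(x^k,z^k,\lambda^k,\hat z^k)$, then invoke the Attouch--Bolte--Svaiter K{\L} machinery (the paper's Lemma \ref{Lemma:existing-global-converg}), with the subgradient assembly and the dual-increment control via Lemma \ref{Lemma:dual-control-primal-MEAL} matching the paper's computations. Two cosmetic points to tighten: your decrease estimate in (i) should retain residual terms in $\|\lambda^{k+1}-\lambda^k\|^2$ and $\|\hat z^{k+1}-\hat z^k\|^2=\|z^k-z^{k-1}\|^2$ so that it controls the full $\|y^{k+1}-y^k\|^2$ required by (P1) (the paper achieves this by keeping $a\beta^{-1}\|\lambda^{k+1}-\lambda^k\|^2$ and $\alpha\|z^k-z^{k-1}\|^2$ in the descent), and the bound on $\|\lambda^{k+1}-\lambda^k\|$ involves the \emph{previous} increment $\|z^k-z^{k-1}\|$ (equivalently $\|\hat z^{k+1}-\hat z^k\|$) rather than $\|z^{k+1}-z^k\|$, while the identity for $A^T\lambda^{k+1}$ also carries the term $-\gamma^{-1}(x^{k+1}-z^k)$.
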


In Proposition \ref{Proposition:globalconv-MEAL}, the K{\L} property of ${\cal P}_{\mathrm{meal}}$ defined in \eqref{Eq:Lyapunov-fun-MEAL}
plays a central role in the establishment of global convergence of MEAL. The K{\L} exponent determines the convergence speed of MEAL; particularly, the exponent $\theta=1/2$ implies linear convergence so it is most desirable. Below we give some results on $\theta$, which are obtainable from~\cite[page 43]{Shiota1997}, \cite[Theorem 3.1]{Bolte-KL2007a}, \cite[Lemma 5]{Zeng-BCD19}, and \cite[Theorem 3.6 and Corollary 5.2]{Li-Pong-KLexponent18}.

\begin{proposition}
\label{Propos:KL-property-Lyapunov}
The following claims hold:
\begin{enumerate}
\item[(a)] If $f$ is subanalytic with a closed domain and continuous on its domain, then ${\cal P}_{\mathrm{meal}}$ defined in \eqref{Eq:Lyapunov-fun-MEAL}  is a K{\L} function;
\item[(b)] If ${\cal L}_{\beta}(x,\lambda)$ defined in \eqref{Eq:augmented-Lagrangian} has the K{\L} property at some point $(x^*,\lambda^*)$ with exponent $\theta \in [1/2,1)$, then ${\cal P}_{\mathrm{meal}}$ has the K{\L} property at $(x^*,x^*,\lambda^*,x^*)$ with exponent $\theta$;
\item[(c)] If $f$ has the following form: %(see, \cite[Eq. (35)]{Li-Pong-KLexponent18}):
\begin{align}
\label{Eq:f-KL-1/2}
f(x) = \min_{1\leq i\leq r}
\left\{ \frac{1}{2}x^TM_ix+u_i^Tx + c_i + P_i(x)\right\},
\end{align}
where $P_i$ are proper closed polyhedral functions, $M_i$ are symmetric matrices of size $n$, $u_i \in \mathbb{R}^n$ and $c_i \in \mathbb{R}$ for $i=1,\ldots,r$, then ${\cal L}_{\beta}$ is a K{\L} function with an exponent of $\theta =1/2$.
\end{enumerate}
\end{proposition}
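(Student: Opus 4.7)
The structural observation I would use throughout is the explicit decomposition
\[
{\cal P}_{\mathrm{meal}}(y) = {\cal L}_\beta(x,\lambda) + \tfrac{1}{2\gamma}\|x-z\|^2 + 3\alpha\|z-\hat z\|^2,
\]
so that at the distinguished point $y^* = (x^*,x^*,\lambda^*,x^*)$ both added quadratic terms and their gradients vanish. Consequently $y^*$ is a critical point of ${\cal P}_{\mathrm{meal}}$ exactly when $(x^*,\lambda^*)$ is critical for ${\cal L}_\beta$, and the local behavior of ${\cal P}_{\mathrm{meal}}$ near $y^*$ is that of ${\cal L}_\beta$ modulo two smooth quadratic couplings.

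Part (a) is the shortest. Subanalyticity is preserved under finite sums with polynomials; the coupling terms are polynomials and $f$ is subanalytic with closed domain; hence ${\cal P}_{\mathrm{meal}}$ is subanalytic and continuous on its closed domain, and \cite[Theorem 3.1]{Bolte-KL2007a} (together with the facts collected in \cite[page 43]{Shiota1997}) gives the K{\L} property at every point of $\mathrm{dom}(\partial {\cal P}_{\mathrm{meal}})$. For part (c) I would unfold the minimum defining $f$:
\[
{\cal L}_\beta(x,\lambda) = \min_{1\leq i\leq r}\left\{\tfrac{1}{2}x^T M_i x + u_i^T x + c_i + P_i(x) + \langle\lambda,Ax-b\rangle + \tfrac{\beta}{2}\|Ax-b\|^2\right\},
\]
which exhibits ${\cal L}_\beta$ as a pointwise minimum of functions that, in the joint variable $(x,\lambda)$, are quadratic (with a symmetric Hessian containing off-diagonal $A^T$ and $A$ blocks) plus a polyhedral summand $P_i$ (still polyhedral after trivial extension in $\lambda$). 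This places ${\cal L}_\beta$ in the class of \cite[Corollary 5.2]{Li-Pong-KLexponent18}, giving K{\L} exponent $1/2$ everywhere.

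Part (b) requires the actual work: lift the K{\L} inequality for ${\cal L}_\beta$ at $(x^*,\lambda^*)$ with exponent $\theta \in [1/2,1)$ to one for ${\cal P}_{\mathrm{meal}}$ at $y^*$ with the same exponent. I would compute the subdifferential block by block,
\[
\partial_x {\cal P}_{\mathrm{meal}} = \partial_x {\cal L}_\beta + \gamma^{-1}(x-z),\qquad \nabla_\lambda {\cal P}_{\mathrm{meal}} = \nabla_\lambda {\cal L}_\beta,
\]
\[
\nabla_z {\cal P}_{\mathrm{meal}} = \gamma^{-1}(z-x) + 6\alpha(z-\hat z),\qquad \nabla_{\hat z} {\cal P}_{\mathrm{meal}} = -6\alpha(z-\hat z),
\]
and split ${\cal P}_{\mathrm{meal}}(y) - {\cal P}_{\mathrm{meal}}(y^*)$ into the ${\cal L}_\beta$-difference plus the two quadratic penalties. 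The quadratic penalties are analytic and each individually satisfies the K{\L} inequality with exponent $1/2$; since $\theta \geq 1/2$, this is exactly the regime in the K{\L} exponent calculus for sums of K{\L} functions coupled through separated variable groups, \cite[Theorem 3.6]{Li-Pong-KLexponent18} (see also \cite[Lemma 5]{Zeng-BCD19}). Applying that rule to the triple $({\cal L}_\beta, \tfrac{1}{2\gamma}\|\cdot-\cdot\|^2, 3\alpha\|\cdot-\cdot\|^2)$ yields the desired K{\L} inequality for ${\cal P}_{\mathrm{meal}}$ at $y^*$ with exponent $\theta$.

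The main obstacle I anticipate is precisely in part (b): the two added quadratics share the variable $z$, so the sum is not strictly block-separable across the partition $\{(x,\lambda),z,\hat z\}$ and a verbatim appeal to a sum-rule may be delicate. My fallback is a hands-on argument: apply the K{\L} inequality for ${\cal L}_\beta$ to bound $({\cal L}_\beta - {\cal L}_\beta(x^*,\lambda^*))^\theta$ by a multiple of $\mathrm{dist}(0,\partial {\cal L}_\beta)$, bound each quadratic by its own gradient (K{\L} exponent $1/2 \leq \theta$), combine the three pieces via $(a+b+c)^{1-\theta}\leq 3^{\theta}(a^{1-\theta}+b^{1-\theta}+c^{1-\theta})$, and triangle-bound the resulting three gradient norms by $\mathrm{dist}(0,\partial {\cal P}_{\mathrm{meal}})$ using the blockwise formulas above. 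The hypothesis $\theta \geq 1/2$ is exactly what makes the quadratic contributions absorbable without raising the exponent.
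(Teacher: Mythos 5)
Your proposal follows essentially the same route as the paper's (very terse) proof: part (a) via closure of subanalytic functions with closed domain under addition of polynomial terms together with \cite[Theorem 3.1]{Bolte-KL2007a}, part (b) by invoking \cite[Theorem 3.6]{Li-Pong-KLexponent18}, and part (c) by placing ${\cal L}_\beta$ in the class of pointwise minima of quadratic-plus-polyhedral functions in the joint variable $(x,\lambda)$ and citing \cite[Corollary 5.2]{Li-Pong-KLexponent18} (the paper makes the joint quadratic structure explicit by completing the square, ${\cal L}_{\beta}(x,\lambda) = \frac{\beta}{2} \|Ax+\beta^{-1}\lambda-b\|^2 + f(x) - \frac{1}{2\beta}\|\lambda\|^2$, rather than unfolding the minimum as you do, but this is cosmetic). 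Your additional hands-on fallback for (b) is not in the paper and correctly pinpoints why $\theta \ge 1/2$ is needed to absorb the quadratic couplings.
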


Claim (a) can be obtained as follows.
The terms in ${\cal P}_{\mathrm{meal}}$
besides $f$ are polynomial functions, which are both real analytic and semialgebraic
\cite{Bochnak-semialgebraic1998}.
Since $f$ is subanalytic with a closed domain and continuous on its domain, by \cite[Lemma 5]{Zeng-BCD19}, ${\cal P}_{\mathrm{meal}}$ is also subanalytic with a closed domain and continuous on its domain.
By \cite[Theorem 3.1]{Bolte-KL2007a}, ${\cal P}_{\mathrm{meal}}$ is a K{\L} function.
Claim (b) can be verified by applying \cite[Theorem 3.6]{Li-Pong-KLexponent18} to ${\cal P}_{\mathrm{meal}}$.
Claim (c) can be established as follows.
The class of functions $f$ defined by \eqref{Eq:f-KL-1/2} are weakly convex with a modulus $\rho = 2 \max_{1\leq i\leq r} \|M_i\|$. According to \cite[Sec. 5.2]{Li-Pong-KLexponent18}, this class covers many nonconvex functions such as SCAD \cite{Fan-SCAD} and MCP \cite{Zhang-MCP} in statistical learning.
The function ${\cal L}_{\beta}(x,\lambda) = \frac{\beta}{2} \|Ax+\beta^{-1}\lambda-b\|^2 + (f(x) - \frac{1}{2\beta}\|\lambda\|^2)$.
according to \cite[Corollary 5.2]{Li-Pong-KLexponent18}, is a K{\L} function with an exponent of $1/2$.
More results on the K{\L} functions with exponent $1/2$ can be found in~\cite{Li-Pong-KLexponent18,Yu-Li-Pong-KLexponent21} and the references therein.

\subsection{Convergence of iMEAL}

When considering iMEAL, the Lyapunov functions need to be slightly modified into
\begin{align}
\label{Eq:Lyapunov-seq-iMEAL-S1}
{\cal E}_{\mathrm{imeal}}^k:= {\cal P}_{\beta_k}(x^k,z^k,\lambda^k)+ 3\alpha_k \|z^k-z^{k-1}\|^2, \ \forall k\geq 1,
\end{align}
associated with the implicit Lipschitz subgradient assumption,
and
\begin{align}
\label{Eq:Lyapunov-seq-iMEAL-S2}
\tilde{\cal E}_{\mathrm{imeal}}^k:= {\cal P}_{\beta_k}(x^k,z^k,\lambda^k)+ 4\alpha_k \|z^k-z^{k-1}\|^2, \ \forall k\geq 1,
\end{align}
associated with the implicit bounded subgradient assumption, where $\alpha_k$ is defined in \eqref{Eq:alphak}.

\begin{theorem}[Iteration Complexity of iMEAL]
\label{Theorem:Convergence-iMEAL}
Let Assumptions \ref{Assump:feasibleset} and \ref{Assump:MEAL}(a) hold, $\gamma \in (0,\rho^{-1})$, and $\eta \in (0,2)$.
Let $\{(x^k,z^k,\lambda^k)\}$ be a sequence generated by iMEAL \eqref{alg:iMEAL} with $\sum_{k=0}^{\infty} \epsilon_k^2 <\infty$.
The following claims hold:
\begin{enumerate}
\item[(a)]
Set $\beta$
sufficiently large
such that in \eqref{Eq:alpha},
$\alpha < \min\left\{\frac{1-\gamma \rho}{6\gamma(1+\gamma L_f)^2},  \frac{1}{12\gamma}(\frac{2}{\eta}-1)\right\}
$.
Under Assumption \ref{Assump:MEAL}(b), if $\{{\cal E}_{\mathrm{imeal}}^k\}$ is lower bounded, then  $\xi^k_{\mathrm{meal}} = o(1/\sqrt{k})$ (cf. \eqref{Eq:measure-MEAL}).

\item[(b)] Pick $K\geq 1$. Set $\{\beta_k\}$ such that in \eqref{Eq:alphak}, $\alpha_k\equiv\frac{\hat{\alpha}^*}{K}$ for some positive constant $\hat{\alpha}^* \leq \min\left\{ \frac{1-\rho \gamma}{8\gamma}, \frac{1}{16\gamma}(\frac{2}{\eta}-1) \right\}$.
Under Assumption \ref{Assump:MEAL}(c), if $\{\tilde{\cal E}_{\mathrm{imeal}}^k\}$ is lower bounded, then
$\xi_{\mathrm{meal}}^K \leq \tilde{c}_2/\sqrt{K}$ for some constant $\tilde{c}_2>0$.
\end{enumerate}
\end{theorem}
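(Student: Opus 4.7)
The strategy is to mirror the MEAL analysis of Theorem \ref{Theorem:Convergence-MEAL} while carrying the inexactness residual $s^k$ from \eqref{iMealCond} through every step. The inflated Lyapunov prefactors ($3\alpha_k$ vs.\ $2\alpha_k$ for part (a), $4\alpha_k$ vs.\ $3\alpha_k$ for part (b)) are designed to absorb the cross-terms in $s^k$ via Young's inequality, at the cost of tightening the admissible range of $\alpha$ (the denominators jump from $4$ to $6$ and from $8$ to $12$ in (a), and analogously from $6$ to $8$ and from $12$ to $16$ in (b)).

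First, I would record the approximate primal stationarity identity that replaces the exact one used for MEAL. Combining \eqref{iMealCond} with the $z$- and $\lambda$-updates gives
\begin{equation*}
(\eta\gamma)^{-1}(z^k - z^{k+1}) + s^k \in \partial f(x^{k+1}) + A^{T}\lambda^{k+1},
\end{equation*}
which is the source of all $O(\epsilon_k)$ perturbations. Next I would derive the one-step descent inequality
\begin{equation*}
{\cal E}^{k+1}_{\mathrm{imeal}} \le {\cal E}^{k}_{\mathrm{imeal}} - c_1 \|z^{k+1}-z^k\|^2 - c_2 \|\lambda^{k+1}-\lambda^k\|^2 + C\epsilon_k^2,
\end{equation*}
by following the MEAL template: use the three-point inequality for the (now inexact) proximal step, bound $\|\lambda^{k+1}-\lambda^k\|^2$ via the primal movement using either the implicit Lipschitz property (part (a)) or the implicit bounded property (part (b)), and absorb $\langle s^k, x^{k+1}-z^k\rangle$-type cross-terms into the additional buffer $\alpha_k\|z^k-z^{k-1}\|^2$ introduced in \eqref{Eq:Lyapunov-seq-iMEAL-S1}--\eqref{Eq:Lyapunov-seq-iMEAL-S2}.

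Once the descent inequality is in place, I would telescope. Under the lower-boundedness of $\{{\cal E}^k_{\mathrm{imeal}}\}$ together with $\sum_k \epsilon_k^2<\infty$, I get
\begin{equation*}
\sum_{k=0}^{\infty}\bigl(\|z^{k+1}-z^k\|^2 + \|\lambda^{k+1}-\lambda^k\|^2\bigr) < \infty.
\end{equation*}
To transfer summability to $\xi^k_{\mathrm{meal}}$, I exploit that the subproblem objective is $(\gamma^{-1}-\rho)$-strongly convex, so the inexact $x^{k+1}$ lies within $\epsilon_k/(\gamma^{-1}-\rho)$ of the exact proximal point, yielding
\begin{equation*}
\|\nabla\phi_{\beta_k}(z^k,\lambda^k)\|^2 \le C'\bigl(\|z^{k+1}-z^k\|^2 + \|\lambda^{k+1}-\lambda^k\|^2 + \epsilon_k^2\bigr).
\end{equation*}
Summability plus the standard minimum-over-tail argument delivers $\xi^k_{\mathrm{meal}} = o(1/\sqrt{k})$ for part (a). For part (b), I would sum the analogous descent inequality for $\{\tilde{\cal E}^k_{\mathrm{imeal}}\}$ over $k=1,\dots,K$ with $\alpha_k\equiv\hat\alpha^*/K$ (so that $\beta_k$ is chosen on the order of $K$); the cumulative bound is $O(1)$, hence $\min_{k\le K}\|\cdot\|^2=O(1/K)$, giving the claimed $O(1/\sqrt{K})$ rate.

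The main obstacle is Step~2: matching constants. Each Young split used to absorb an $s^k$-perturbation eats into $c_1$ or $c_2$, while additional room must remain to dominate the new $\alpha_k\|z^{k+1}-z^k\|^2$ contribution introduced by the enlarged Lyapunov function. The tightened thresholds on $\alpha$ and $\hat\alpha^*$ in the hypotheses are exactly the budgets needed to keep $c_1,c_2>0$; careful bookkeeping—rather than any new conceptual ingredient—will be required to verify this, and to confirm the constant $C$ in front of $\epsilon_k^2$ is independent of $k$ so that $\sum_k \epsilon_k^2<\infty$ propagates cleanly.
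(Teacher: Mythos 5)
Your proposal follows essentially the same route as the paper: combine the one-step progress inequality for the inexact proximal step with the dual-controlled-by-primal bounds (under either implicit subgradient assumption), absorb the $\langle s^k,\cdot\rangle$ cross-term by Young's inequality using exactly the tightened $\alpha$-thresholds and the inflated Lyapunov prefactors, then telescope and invoke the standard summability lemmas to get $o(1/\sqrt{k})$ and ${\cal O}(1/\sqrt{K})$. The only cosmetic differences are that the paper keeps $\|\nabla\phi_{\beta_k}(z^k,\lambda^k)\|^2$ directly on the right-hand side of the descent inequality (rather than converting from $\|z^{k+1}-z^k\|^2+\|\lambda^{k+1}-\lambda^k\|^2$ afterward) and that its perturbation terms appear as $(\epsilon_k+\epsilon_{k-1})^2$, which is still square-summable, so your version is equivalent.
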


By Theorem \ref{Theorem:Convergence-iMEAL}, the iteration complexity of iMEAL is the same as that of MEAL and also consistent with that of inexact proximal ALM~\cite{Xie-Wright19} (when the stationary accuracy $\epsilon_k$ is square summable). % in \citep{Xie-Wright19}.
Moreover, if the condition on $\epsilon_k$ is strengthened to be $\sum_{k=0}^{\infty} \epsilon_k <+\infty$ as required in the literature~\cite{Rockafellar76-PALM,Wang19}, then following a proof similar for Proposition \ref{Proposition:globalconv-MEAL}, global convergence and similar rates of MEAL also hold for iMEAL under the assumptions required for Theorem \ref{Theorem:Convergence-iMEAL}(a) and the K{\L} property.

\section{Convergence of LiMEAL for Composite Objective}
\label{sc:LiMEAL}

This section presents the convergence results of LiMEAL \eqref{alg:LiMEAL} for the constrained problem with a composite objective \eqref{Eq:problem-CP}. The proofs are postponed to Section \ref{sc:main-proofs} below.
Similar to Assumption \ref{Assump:MEAL}, we make the following assumptions.
\begin{assumption}
\label{Assump:LiMEAL}
The objective $f(x)=h(x)+g(x)$ in problem \eqref{Eq:problem-CP} satisfies:
\begin{enumerate}
\item[(a)] $h$ is differentiable and $\nabla h$ is Lipschitz continuous with a constant $L_h>0$;
\item[(b)] $g$ is proper lower-semicontinuous and $\rho_g$-weakly convex; and \textbf{either}
\item[(c)] $g$ has the \textbf{implicit Lipschitz subgradient} property with a constant $L_g>0$; \textbf{or}
\item[(d)] $g$ has the \textbf{implicit bounded subgradient} property with a constant $\hat{L}_g>0$.
\end{enumerate}
In (c) and (d), $L_g$ and $\hat{L}_g$ may depend on $\gamma$.
\end{assumption}

By the update \eqref{alg:LiMEAL} of LiMEAL, some simple derivations show that
\begin{align}
\label{Eq:xk+1-proxform-LiMEAL}
x^{k+1} = \mathrm{Prox}_{\gamma,g}(z^k - \gamma (\nabla h(x^k)+A^T\lambda^{k+1}))
\end{align}
and
\begin{align}
\label{Eq:stationary-LiMEAL}
g_{\mathrm{limeal}}^k:=
\left(
\begin{array}{c}
\gamma^{-1} (z^k - x^{k+1}) + (\nabla h(x^{k+1})-\nabla h(x^k))\\
\beta_k^{-1}(\lambda^{k+1}-\lambda^k)
\end{array}
\right)
\in
\left(
\begin{array}{c}
\partial f(x^{k+1}) + A^T\lambda^{k+1}\\
Ax^{k+1}-b
\end{array}
\right).
\end{align}
Actually, the term $\gamma^{-1}(z^k-x^{k+1})$ represents some \textit{prox-gradient sequence} frequently used in the analysis of algorithms for the unconstrained composite optimization (e.g., \cite{Davis-Drusvyatskiy19}).
Thus, let
\begin{align}
\label{Eq:measure-LiMEAL}
\xi_{\mathrm{limeal}}^k := \min_{0\leq t \leq k} \|g_{\mathrm{limeal}}^t\|, \ \forall k\in \mathbb{N},
\end{align}
which can be taken as an effective stationarity measure of LiMEAL for problem \eqref{Eq:problem-CP}.

In the following, we present the iteration complexity of LiMEAL for problem \eqref{Eq:problem-CP}. Since the prox-linear scheme is adopted in the update of $x^{k+1}$ in LiMEAL as described in \eqref{alg:LiMEAL}, thus, the proximal term (i.e., $\|x^k-x^{k-1}\|^2$) should be generally included in the associated Lyapunov functions of LiMEAL, shown as follows:
\begin{align}
\label{Eq:Lyapunov-seq-LiMEAL-S1}
{\cal E}^k_{\mathrm{limeal}}
&:= {\cal P}_{\beta_k}(x^k,z^k,\lambda^k) + 3\alpha_k (\gamma^2L_h^2 \|x^k-x^{k-1}\|^2 + \|z^k - z^{k-1}\|^2)
\end{align}
associated with the \textit{implicit Lipschitz gradient} assumption, and
\begin{align}
\label{Eq:Lyapunov-seq-LiMEAL-S2}
\tilde{\cal E}_{\mathrm{limeal}}^k:= {\cal P}_{\beta_k}(x^k,z^k,\lambda^k) + 4{\alpha}_{k}(\gamma^2L_h^2 \|x^k-x^{k-1}\|^2 + \|z^{k}-z^{k-1}\|^2),
\end{align}
associated with the \textit{implicit bounded subgradient} assumption,
where $\alpha_k$ is defined in \eqref{Eq:alphak}.

The iteration complexity of MEAL can be similarly generalized to LiMEAL as follows.

\begin{theorem}[Iteration Complexity of LiMEAL]
\label{Theorem:Convergence-LiMEAL}
Take Assumptions \ref{Assump:feasibleset} and \ref{Assump:LiMEAL}(a)-(b). Pick $\eta \in (0,2)$ and $0<\gamma<\frac{2}{(\rho_g+L_h)\left(1+\sqrt{1+\frac{2(2-\eta)\eta L_h^2}{(\rho_g+L_h)^2}} \right)}$.
Let $\{(x^k,z^k,\lambda^k)\}$ be a sequence generated by LiMEAL \eqref{alg:LiMEAL}.
The following claims hold:
\begin{enumerate}
\item[(a)] Set $\beta$
sufficiently large such that $\alpha < \min\left\{\frac{1}{12\gamma}(\frac{2}{\eta}-1), \frac{1-\gamma(\rho_g+L_h) - \eta(1-\eta/2)\gamma^2L_h^2}{6\gamma \left((1+\gamma L_g)^2 + \gamma^2 L_h^2 \right)}\right\}$.
Under Assumption \ref{Assump:LiMEAL}(c), if $\{{\cal E}^k_{\mathrm{limeal}}\}$ is lower bounded,
then
$\xi^k_{\mathrm{limeal}} = o(1/\sqrt{k})$.

\item[(b)]
Pick $K\geq 1$. Set $\{\beta_k\}$ such that $\alpha_k \equiv \frac{\bar{\alpha}^*}{K}$ for some positive constant $\bar{\alpha}^*\leq\min\Big\{\frac{1-\gamma\left(\rho_g+L_h)-\eta(1-\eta/2)\gamma^2 L_h^2 \right)}{8\gamma(1+\gamma^2L_h^2)} $, $\frac{1}{16\gamma}\left(\frac{2}{\eta} -1 \right)\Big\}$.
Under Assumption \ref{Assump:LiMEAL}(d), if $\{\tilde{\cal E}^k_{\mathrm{limeal}}\}$ is lower bounded,
then $\xi_{\mathrm{limeal}}^K \leq \tilde{c}_3/\sqrt{K}$ for some constant $\tilde{c}_3>0$.
\end{enumerate}
\end{theorem}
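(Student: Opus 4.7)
The plan is to mirror the Lyapunov-descent analysis used for MEAL in Theorem~\ref{Theorem:Convergence-MEAL}, modified to absorb the error from replacing $h$ with its first-order model $h(x^k)+\langle\nabla h(x^k),\cdot-x^k\rangle$ inside the subproblem. Two new effects arise: (i) in the per-step descent of ${\cal P}_{\beta_k}$, the bound is polluted by the quadratic gap $\tfrac{L_h}{2}\|x^{k+1}-x^k\|^2$ coming from the descent lemma for $\nabla h$; and (ii) the dual residual carries an extra term $\nabla h(x^{k-1})-\nabla h(x^k)$, which by Lipschitz continuity produces $L_h\|x^k-x^{k-1}\|$. Both effects rationalize augmenting the Lyapunov functions \eqref{Eq:Lyapunov-seq-LiMEAL-S1}/\eqref{Eq:Lyapunov-seq-LiMEAL-S2} by a multiple of $\gamma^2L_h^2\|x^k-x^{k-1}\|^2$, so that the excess at step $k{+}1$ is paid by the telescoping decrement at step $k$.

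First I would establish a per-step descent of ${\cal P}_{\beta_k}$. The optimality condition for $x^{k+1}$ in \eqref{alg:LiMEAL}, after folding in $\lambda^{k+1}=\lambda^k+\beta_k(Ax^{k+1}-b)$, reads
\[
0\in \nabla h(x^k)+\partial g(x^{k+1})+A^T\lambda^{k+1}+\gamma^{-1}(x^{k+1}-z^k).
\]
Combining this with $\rho_g$-weak convexity of $g$ and the descent lemma for $h$, I obtain an ``effective strong-convexity'' bound on the $x$-step with modulus $\gamma^{-1}-\rho_g-L_h$, yielding the decrement $-\tfrac{1}{2}(\gamma^{-1}-\rho_g-L_h)\|x^{k+1}-x^k\|^2$. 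The $z$-step contributes the standard $-\tfrac{1}{\gamma\eta}(1-\eta/2)\|z^{k+1}-z^k\|^2$, while the $\lambda$-step and any change in $\beta_k$ contribute an increment $\tfrac{\beta_k+\beta_{k+1}}{2\beta_k^2}\|\lambda^{k+1}-\lambda^k\|^2$.

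Next I would control the dual increment. Subtracting the optimality conditions at steps $k$ and $k{+}1$, using the identification $\xi^{k+1}=\nabla {\cal M}_{\gamma,g}\bigl(z^k-\gamma(\nabla h(x^k)+A^T\lambda^{k+1})\bigr)\in\partial g(x^{k+1})$ obtained from \eqref{Eq:xk+1-proxform-LiMEAL}, and invoking the implicit Lipschitz subgradient (Assumption~\ref{Assump:LiMEAL}(c)) to bound $\|\xi^{k+1}-\xi^k\|\le L_g\|x^{k+1}-x^k\|$, I get
\[
A^T(\lambda^{k+1}-\lambda^k)=\bigl(\nabla h(x^{k-1})-\nabla h(x^k)\bigr)+(\xi^k-\xi^{k+1})+\gamma^{-1}(z^k-z^{k-1})-\gamma^{-1}(x^{k+1}-x^k).
\]
Since $b\in\mathrm{Im}(A)$ by Assumption~\ref{Assump:feasibleset} forces $\lambda^{k+1}-\lambda^k\in\mathrm{Im}(A)$, applying $\|A^Tv\|^2\ge (c_{\gamma,A}/\gamma^2)\|v\|^2$ on $\mathrm{Im}(A)$ yields
\[
\|\lambda^{k+1}-\lambda^k\|^2\le \frac{3}{c_{\gamma,A}}\bigl(\gamma^2L_h^2\|x^k-x^{k-1}\|^2+(1+\gamma L_g)^2\|x^{k+1}-x^k\|^2+\|z^k-z^{k-1}\|^2\bigr).
\]
Under Assumption~\ref{Assump:LiMEAL}(d), the middle factor $(1+\gamma L_g)^2\|x^{k+1}-x^k\|^2$ is replaced by a constant multiple of $\hat L_g^2$. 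The definition of $\alpha_k$ in \eqref{Eq:alphak} is engineered precisely so that $\tfrac{\beta_k+\beta_{k+1}}{2\beta_k^2}$ times the dual bound telescopes cleanly against the extra $3\alpha_{k+1}\gamma^2L_h^2\|x^{k+1}-x^k\|^2$ and $3\alpha_{k+1}\|z^{k+1}-z^k\|^2$ sitting in ${\cal E}_{\mathrm{limeal}}^{k+1}$.

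Summing the resulting inequality from $0$ to $K$ and using lower-boundedness of $\{{\cal E}_{\mathrm{limeal}}^k\}$ (resp.\ $\{\tilde{\cal E}_{\mathrm{limeal}}^k\}$) yields $\sum_k(\|x^{k+1}-x^k\|^2+\|z^{k+1}-z^k\|^2)<\infty$ in case~(a), hence $\|x^{k+1}-x^k\|^2+\|z^{k+1}-z^k\|^2=o(1/k)$; substituting back into \eqref{Eq:stationary-LiMEAL} delivers $\xi^k_{\mathrm{limeal}}=o(1/\sqrt k)$. In case~(b) the same inequality, summed over $k=0,\dots,K{-}1$ with the persistent $\hat L_g^2$-error absorbed by $K\alpha_k=\bar\alpha^*$ (constant), gives $\min_{k\le K}(\|x^{k+1}-x^k\|^2+\|z^{k+1}-z^k\|^2)={\cal O}(1/K)$ and thus $\xi^K_{\mathrm{limeal}}\le \tilde c_3/\sqrt K$. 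The main obstacle will be simultaneously tuning $\gamma$ and $\alpha$ (or $\bar\alpha^*$) so that (i) the effective modulus $1-\gamma(\rho_g+L_h)-\eta(1-\eta/2)\gamma^2L_h^2$ stays positive (this is exactly the stated upper bound on $\gamma$, arising from the extra $\eta(1-\eta/2)\gamma L_h^2$ picked up when the $z$-step interacts with the dual bound), and (ii) the net coefficient of $\|x^{k+1}-x^k\|^2$, namely the effective modulus offset by $3\alpha[(1+\gamma L_g)^2+\gamma^2L_h^2]$, and the net coefficient of $\|z^{k+1}-z^k\|^2$, namely $\tfrac{1}{\gamma\eta}(1-\eta/2)-3\alpha$, are both strictly negative; these two requirements produce the precise thresholds on $\alpha$ and $\bar\alpha^*$ in (a) and (b).
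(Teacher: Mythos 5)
Your proposal is correct and follows essentially the same route as the paper: the one-step descent with effective modulus $\gamma^{-1}-\rho_g-L_h$, the dual-increment bound carrying the lag term $\gamma^2L_h^2\|x^k-x^{k-1}\|^2$, the augmented Lyapunov functions, and the telescoping/summability argument coincide with Lemmas~\ref{Lemma:dual-control-primal-LiMEAL} and~\ref{Lemma:1-step-progress-LiMEAL} and the paper's proof, and your thresholds on $\gamma$, $\alpha$, $\bar{\alpha}^*$ match exactly. Two wording slips that do not affect the argument: the net coefficients of $\|x^{k+1}-x^k\|^2$ and $\|z^{k+1}-z^k\|^2$ must be kept \emph{nonnegative} (not ``strictly negative''), and summability yields $o(1/k)$ only for the running minimum of the squared increments rather than for the sequence itself---which suffices here because $\xi^k_{\mathrm{limeal}}$ is defined as a running minimum.
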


Similar to the discussions following Theorem \ref{Theorem:Convergence-MEAL}, to yield an $\varepsilon$-accurate first-order stationary point, the iteration complexity of LiMEAL is $o(\varepsilon^{-2})$ under the \textit{implicit Lipschitz subgradient} assumption and ${\cal O}(\varepsilon^{-2})$ under the \textit{implicit bounded subgradient} assumption,
as demonstrated by Theorem \ref{Theorem:Convergence-LiMEAL}. The conditions on $\beta$ and $\beta_k$ in these two cases can be derived similarly to \eqref{Eq:cond-beta-MEAL-S1} and \eqref{Eq:cond-beta-MEAL-S2}, respectively.

In the following, we establish the global convergence and rates of LiMEAL under assumptions required for Theorem \ref{Theorem:Convergence-LiMEAL}(a) and the K{\L} property.
Specifically, let
$
\hat{x}^k:= x^{k-1}, \ \hat{z}^k := z^{k-1}, \ {y}^k:= (x^k,z^k,\lambda^k,\hat{x}^k,\hat{z}^k), \ \forall k\geq 1,
$
${y}:= (x,z,\lambda,\hat{x},\hat{z}) \in \mathbb{R}^n \times \mathbb{R}^n \times \mathbb{R}^m \times \mathbb{R}^n\times \mathbb{R}^n,$ and
\begin{align}
\label{Eq:Lyapunov-fun-LiMEAL}
{\cal P}_{\mathrm{limeal}}({y}) := {\cal P}_{\beta}(x,z,\lambda) + 4\alpha \left(\|z-\hat{z}\|^2 + \gamma^2 L_h^2 \|x-\hat{x}\|^2\right).
\end{align}

\begin{proposition}[Global convergence and rates of LiMEAL]
\label{Proposition:globalconv-LiMEAL}
Suppose that Assumptions \ref{Assump:feasibleset} and \ref{Assump:LiMEAL}(a)-(c) hold and that the sequence $\{(x^k,z^k,\lambda^k)\}$ generated by LiMEAL \eqref{alg:LiMEAL} is bounded.
If $\gamma \in (0,\frac{1}{\rho_g+L_h})$, $\eta \in (0,2)$, $0<\alpha < \min \left\{\frac{1}{8\gamma}\left(\frac{2}{\eta}-1\right), \frac{1-\gamma(\rho_g+L_h)}{8\gamma \left((1+\gamma L_g)^2+\gamma^2 L_h^2 \right)} \right\}$,
and ${\cal P}_{\mathrm{limeal}}$ satisfies the K{\L} property at some point $y^*:= (x^*,x^*,\lambda^*,x^*,x^*)$ with an exponent of $\theta \in [0,1)$, where $(x^*,\lambda^*)$ is a limit point of $\{(x^k,\lambda^k)\}$, then
\begin{enumerate}
\item[(a)] the whole sequence $\{\hat{y}^k:=(x^k,z^k,\lambda^k)\}$ converges to $\hat{y}^*:=(x^*,x^*,\lambda^*)$; and

\item[(b)] all the rates of convergence results in Proposition \ref{Proposition:globalconv-MEAL}(b) also hold for LiMEAL.
\end{enumerate}
\end{proposition}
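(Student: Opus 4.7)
\smallskip

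\noindent\textbf{Proof plan for Proposition \ref{Proposition:globalconv-LiMEAL}.}
The plan is to follow the classical three-ingredient K{\L} recipe of \cite{Attouch13,Bolte2014} adapted to the augmented variable $y^k=(x^k,z^k,\lambda^k,\hat{x}^k,\hat{z}^k)$ and to the Lyapunov function ${\cal P}_{\mathrm{limeal}}$ defined in \eqref{Eq:Lyapunov-fun-LiMEAL}, mirroring the strategy used for Proposition \ref{Proposition:globalconv-MEAL} but paying extra attention to the linearization term $\nabla h(x^k)$ that appears in LiMEAL's $x$-update. The three ingredients are (i) a sufficient-decrease inequality of the form
\begin{equation*}
{\cal P}_{\mathrm{limeal}}(y^{k+1})\le {\cal P}_{\mathrm{limeal}}(y^{k}) - c_1\bigl(\|x^{k+1}-x^k\|^2+\|z^{k+1}-z^k\|^2\bigr),
\end{equation*}
(ii) a relative-error bound $\mathrm{dist}(0,\partial {\cal P}_{\mathrm{limeal}}(y^{k+1}))\le c_2(\|x^{k+1}-x^k\|+\|x^k-x^{k-1}\|+\|z^{k+1}-z^k\|+\|z^k-z^{k-1}\|)$, and (iii) a continuity/limit-point statement showing that every cluster point $\hat{y}^*=(x^*,x^*,\lambda^*)$ of $\{\hat{y}^k\}$ satisfies $\hat{y}^k\to\hat{y}^*$ along a subsequence with ${\cal P}_{\mathrm{limeal}}(y^k)\to{\cal P}_{\mathrm{limeal}}(y^*)$, so that the feasibility $Ax^*=b$ and $z^*=x^*$ identifications used in the statement of the proposition hold.

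First I would extract the sufficient-decrease inequality directly from the one-step energy estimate underlying Theorem~\ref{Theorem:Convergence-LiMEAL}(a); the chosen bounds on $\gamma$, $\eta$, and $\alpha$ in the present proposition are exactly those needed to make the coefficients of $\|x^{k+1}-x^k\|^2$ and $\|z^{k+1}-z^k\|^2$ in that estimate strictly positive, after the $\alpha\gamma^2 L_h^2\|x^k-\hat x^k\|^2$ and $\alpha\|z^k-\hat z^k\|^2$ terms have been absorbed using the telescoping trick from the MEAL analysis. Summing this inequality over $k$ and using that $\{(x^k,z^k,\lambda^k)\}$ is bounded (so that ${\cal P}_{\mathrm{limeal}}(y^k)$ is bounded below) yields $\sum_k(\|x^{k+1}-x^k\|^2+\|z^{k+1}-z^k\|^2)<\infty$; the dual bound $\|\lambda^{k+1}-\lambda^k\|\le C\|x^{k+1}-x^k\|$ is then inherited, as in Theorem \ref{Theorem:Convergence-LiMEAL}, from the $\lambda$-update and the first-order optimality of the $x$-subproblem (using that $\{\lambda^k-\lambda^{k-1}\}\in \mathrm{Im}(A)$ and $\tilde\sigma_{\min}(A^TA)>0$).

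Second, for the relative-error step I would compute $\partial {\cal P}_{\mathrm{limeal}}(y^{k+1})$ block by block: the $x$-component uses the optimality condition for the prox-linear update, which reads $0\in\partial g(x^{k+1})+\nabla h(x^k)+A^T\lambda^{k+1}+\gamma^{-1}(x^{k+1}-z^k)+\beta(A^Tb+\text{linear terms})$; replacing $\nabla h(x^k)$ by $\nabla h(x^{k+1})$ introduces an error of size $L_h\|x^{k+1}-x^k\|$, which is exactly why the extra $\|x-\hat x\|^2$ term was put into ${\cal P}_{\mathrm{limeal}}$. The $z$-, $\lambda$-, $\hat x$-, $\hat z$-components are handled by elementary gradient computations, and combining them gives the desired bound.

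Finally, with (i)–(iii) in hand and the assumed K{\L} property of ${\cal P}_{\mathrm{limeal}}$ at $y^*$, I would invoke the standard abstract convergence theorem (e.g.\ \cite[Theorem 2.9]{Attouch13} or \cite[Theorem 1]{Bolte2014}) to deduce $\sum_k\|y^{k+1}-y^k\|<\infty$, which yields the whole-sequence convergence claim (a); the rate dichotomy in (b) then follows verbatim from the K{\L}-exponent analysis used in Proposition \ref{Proposition:globalconv-MEAL}(b). The main obstacle I anticipate is the sufficient-decrease step: ensuring that the coefficients of the descent quadratic form remain strictly positive after absorbing the linearization error $\tfrac{L_h}{2}\|x^{k+1}-x^k\|^2$ and the $\alpha\gamma^2L_h^2\|x^k-\hat x^k\|^2$ carryover is what dictates the upper bound on $\gamma$ and the upper bound on $\alpha$ stated in the proposition, and verifying that these exactly match requires a careful bookkeeping of the constants from the proof of Theorem \ref{Theorem:Convergence-LiMEAL}.
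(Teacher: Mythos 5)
Your plan is correct and follows essentially the same route as the paper: the paper's proof verifies exactly the three conditions (P1)--(P3) of its Lemma \ref{Lemma:existing-global-converg} (the Attouch et al.\ framework), obtaining sufficient decrease of ${\cal P}_{\mathrm{limeal}}$ by substituting the dual-control bound \eqref{Eq:lambda-LiMEAL} into the one-step estimate \eqref{Eq:primal-descent-LiMEAL}, bounding $\mathrm{dist}(0,\partial{\cal P}_{\mathrm{limeal}}(y^{k+1}))$ block by block from the prox-linear optimality condition (with the $\nabla h(x^k)$-versus-$\nabla h(x^{k+1})$ discrepancy handled exactly as you describe), and getting continuity from boundedness. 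The only bookkeeping point to note is that (P1) must deliver decrease in the full augmented variable $y^k$, including $\|\lambda^{k+1}-\lambda^k\|^2$ and the hat-variable differences, which the paper achieves by retaining an $a\beta^{-1}\|\lambda^{k+1}-\lambda^k\|^2$ surplus rather than discarding the dual term entirely.
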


\begin{remark}
The established results in this section is more general than those in \cite{Zhang-Luo18} and done under weaker assumptions on $h$ and for more general class of $g$. Specifically, as discussed in Section \ref{sc:relation-existing-methods}, the algorithm studied in \cite{Zhang-Luo18} is a prox-linear version of LiMEAL with $g$ being an indicator function of a box constraint set.
In~\cite{Zhang-Luo18}, global convergence and a linear rate of {proximal inexact ALM} were proved for quadratic programming, where that the augmented Lagrangian satisfies the K{\L} inequality with exponent $1/2$.
Besides, the strict complementarity condition required in \cite{Zhang-Luo18} is also removed in this paper for LiMEAL.
\end{remark}

\section{Main Proofs}
\label{sc:main-proofs}

In this section, we first prove some lemmas and then present the proofs of our main convergence results.

\subsection{Preliminary Lemmas}
\label{sc:preliminary-lemmas}

\subsubsection{Lemmas on Iteration Complexity and Global Convergence}

The first lemma concerns the convergence speed of a nonenegative sequence $\{\xi_k\}$ satisfying the following relation
\begin{align}
\label{Eq:sequence-fixed}
\tilde{\eta} \xi_k^2 \leq ({\cal E}_k - {\cal E}_{k+1}) + \tilde{\epsilon}_k^2, \ \forall k\in \mathbb{N},
\end{align}
where $\tilde{\eta}>0$, $\{{\cal E}_k\}$ and $\{\tilde{\epsilon}_k\}$ are two nonnegative sequences, and $\sum_{k=1}^{\infty} \tilde{\epsilon}_k^2 < +\infty$.

\begin{lemma}
\label{Lemma:sequence-fixed}
For any sequence $\{\xi_k\}$ satisfying \eqref{Eq:sequence-fixed}, $\tilde{\xi}_k := \min_{1\leq t\leq k} \xi_t = o(1/\sqrt{k})$.
\end{lemma}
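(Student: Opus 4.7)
The plan is to reduce the claim to the standard fact that if a nonnegative series converges, then its running minimum decays strictly faster than $1/k$. First, I would sum the recursion \eqref{Eq:sequence-fixed} from $t=1$ to $k$, producing the telescoping bound
\[
\tilde{\eta}\sum_{t=1}^{k}\xi_t^2 \;\leq\; \mathcal{E}_1 - \mathcal{E}_{k+1} + \sum_{t=1}^{k}\tilde{\epsilon}_t^2 \;\leq\; \mathcal{E}_1 + \sum_{t=1}^{\infty}\tilde{\epsilon}_t^2,
\]
where the second inequality uses $\mathcal{E}_{k+1}\geq 0$. Since $\sum_{t=1}^\infty \tilde{\epsilon}_t^2 < \infty$ by hypothesis, the right-hand side is finite and independent of $k$, whence $\sum_{t=1}^{\infty}\xi_t^2 < +\infty$.

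Next, I would exploit summability to upgrade the trivial bound $\tilde{\xi}_k^2 \leq \frac{1}{k}\sum_{t=1}^k \xi_t^2$ (which only gives $\mathcal{O}(1/\sqrt{k})$) into a little-$o$ estimate. The key observation is that $\tilde{\xi}_k^2 = \min_{1\leq t\leq k}\xi_t^2$ is nonincreasing in $k$, so restricting to the second half of the index window yields
\[
\tilde{\xi}_k^2 \;\leq\; \min_{\lceil k/2 \rceil \leq t \leq k} \xi_t^2 \;\leq\; \frac{1}{k - \lceil k/2 \rceil + 1}\sum_{t=\lceil k/2\rceil}^{k}\xi_t^2.
\]
The tail sum on the right tends to $0$ as $k\to\infty$ because $\sum_{t}\xi_t^2$ converges, and hence $k\cdot \tilde{\xi}_k^2 \to 0$. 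This is exactly $\tilde{\xi}_k = o(1/\sqrt{k})$.

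There is essentially no obstacle here: the argument is a two-line combination of a telescoping sum with the ``tail of a convergent series vanishes'' principle. The only point that deserves care is the nonnegativity of $\{\mathcal{E}_k\}$ (to drop $-\mathcal{E}_{k+1}$) and the fact that $\tilde{\xi}_k$ is monotone nonincreasing (to justify the restriction to the second half of indices). Both are immediate from the lemma's hypotheses and from the definition of $\tilde{\xi}_k$.
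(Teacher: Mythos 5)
Your proof is correct and follows essentially the same route as the paper: telescoping the recursion to get $\sum_k \xi_k^2 < \infty$, then bounding $\tilde{\xi}_k^2$ by the average of $\xi_t^2$ over the second half of the index window, whose sum vanishes as a tail of a convergent series (the paper writes this as $\tfrac{k}{2}\tilde{\xi}_k^2 \leq \sum_{k/2\leq j\leq k}\xi_j^2 \to 0$, citing Deng et al.). No gaps; your explicit attention to the nonnegativity of $\{\mathcal{E}_k\}$ and the monotonicity of $\tilde{\xi}_k$ matches the hypotheses as stated.
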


\begin{proof}
Summing \eqref{Eq:sequence-fixed} over $k$ from $1$ to $K$ and letting $K\rightarrow +\infty$ yields
\begin{align*}
\sum_{k=1}^{\infty} \xi_k^2 \leq \tilde{\eta}^{-1}\left({\cal E}_1 +  \sum_{k=1}^{\infty}\tilde{\epsilon}_k^2\right) <+\infty,
\end{align*}
which implies the desired convergence speed by $\frac{k}{2} \tilde{\xi}_k^2 \leq \sum_{\frac{k}{2}\leq j \leq k} {\xi}_j^2 \rightarrow 0$ as $k\rightarrow \infty$, as proved in \cite[Lemma 1.1]{Deng-parallelADMM17}.
\end{proof}

Then we provide a lemma to show the convergence speed of a nonenegative sequence $\{\xi_k\}$ satisfying the following relation instead of \eqref{Eq:sequence-fixed}
\begin{align}
\label{Eq:sequence-varying}
\tilde{\eta} \xi_k^2 \leq ({\cal E}_k - {\cal E}_{k+1}) + \tilde{\epsilon}_k^2 + {\alpha}_k \tilde{L}, \ \forall k\in \mathbb{N},
\end{align}
where $\tilde{\eta}>0,$ $\tilde{L}>0$, $\{{\cal E}_k\}$, $\{{\alpha}_k\}$ and $\{\tilde{\epsilon}_k\}$ are nonnegative sequences, and $\sum_{k=1}^{\infty} \tilde{\epsilon}_k^2 < +\infty$.

\begin{lemma}
\label{Lemma:sequence-varying}
Pick $K \ge 1$. Let $\{\xi_k\}$ be a nonnegative sequence satisfying \eqref{Eq:sequence-varying}. Set $\alpha_k \equiv \frac{\tilde{\alpha}}{K}$ for some $\tilde{\alpha}>0$. Then
$\tilde{\xi}_K := \min_{1\leq k\leq K} \xi_k \leq \tilde{c}/\sqrt{K}$ for some constant $\tilde{c}>0$.
\end{lemma}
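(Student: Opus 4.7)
The plan is to mimic the telescoping/min-argument used in the proof of Lemma \ref{Lemma:sequence-fixed}, but to track carefully the extra $\alpha_k \tilde{L}$ term and exploit the fact that $\alpha_k$ has been tuned to $\tilde{\alpha}/K$ so that its cumulative contribution over $K$ steps is \emph{independent of $K$}.

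First I would sum the recursion \eqref{Eq:sequence-varying} over $k=1,\dots,K$. The ${\cal E}$-differences telescope to ${\cal E}_1 - {\cal E}_{K+1} \le {\cal E}_1$ by nonnegativity of $\{{\cal E}_k\}$; the $\tilde{\epsilon}_k^2$ terms are controlled by the tail bound $\sum_{k=1}^{\infty}\tilde{\epsilon}_k^2 < +\infty$; and the $\alpha_k \tilde{L}$ terms collapse to $\sum_{k=1}^{K}\frac{\tilde{\alpha}}{K}\tilde{L} = \tilde{\alpha}\tilde{L}$. Thus
\[
\tilde{\eta}\sum_{k=1}^{K}\xi_k^2 \;\le\; {\cal E}_1 + \sum_{k=1}^{\infty}\tilde{\epsilon}_k^2 + \tilde{\alpha}\tilde{L} \;=:\; C,
\]
where $C$ does not depend on $K$.

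Next I would use the elementary observation that $K\cdot \tilde{\xi}_K^2 = K\cdot \min_{1\le k\le K}\xi_k^2 \le \sum_{k=1}^{K}\xi_k^2$, which together with the bound above yields $\tilde{\xi}_K^2 \le C/(\tilde{\eta} K)$, i.e., $\tilde{\xi}_K \le \tilde{c}/\sqrt{K}$ with $\tilde{c} := \sqrt{C/\tilde{\eta}}$. This produces the claimed $\mathcal{O}(1/\sqrt{K})$ rate.

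The only subtlety, and the place where this argument departs from Lemma \ref{Lemma:sequence-fixed}, is that we cannot hope for an $o(1/\sqrt{k})$ statement here: the residual term $\alpha_k \tilde{L}$ is not summable over $k \in \mathbb{N}$ in general, so we genuinely need the choice $\alpha_k \equiv \tilde{\alpha}/K$ tied to a prescribed horizon $K$ to keep the aggregate contribution $\tilde{\alpha}\tilde{L}$ constant. No deeper obstacle arises; the proof is essentially a one-line telescoping followed by the ``min $\le$ average'' inequality.
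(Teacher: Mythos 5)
Your proof is correct and follows essentially the same route as the paper's: sum the recursion over $k=1,\dots,K$, telescope the ${\cal E}_k$-differences, note $\sum_{k=1}^K\alpha_k=\tilde{\alpha}$, and apply the ``min $\le$ average'' inequality to get the $\tilde{c}/\sqrt{K}$ bound with the same constant. Your closing remark about why the $o(1/\sqrt{k})$ rate is unattainable here is a sound observation but not needed for the claim.
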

\begin{proof}
Summing \eqref{Eq:sequence-varying} over $k$ from $1$ to $K$ yields
\begin{align*}
 \sum_{k=1}^K  \xi_k^2 \leq \frac{{\cal E}_1 + \sum_{k=1}^K \tilde{\epsilon}_k^2 + \tilde{L}\sum_{k=1}^K \alpha_k}{ \tilde{\eta}}.
\end{align*}
From $\sum_{k=1}^{\infty} \tilde{\epsilon}_k^2 < +\infty$ and  $\sum_{k=1}^K{\alpha}_k=\tilde{\alpha}$, we get $K\tilde{\xi}_K^2\le \sum_{k=1}^K  \xi_k^2\le\frac{{\cal E}_1 + \sum_{k=1}^\infty \tilde{\epsilon}_k^2 + \tilde{L}\tilde{\alpha}}{\tilde{\eta}}<+\infty$.
The result follows with $\tilde{c} := \sqrt{{\cal E}_1 + \sum_{k=1}^\infty \tilde{\epsilon}_k^2 + \tilde{L}\tilde{\alpha}}/\sqrt{{\tilde{\eta}}}$.
\end{proof}

In both Lemmas \ref{Lemma:sequence-fixed} and \ref{Lemma:sequence-varying}, the nonnegative assumption on the sequence $\{{\cal E}_k\}$ can be relaxed to its lower boundedness.

The following lemma presents the global convergence and rate of a sequence generated by some algorithm for the nonconvex optimization problem, based on the Kurdyka-{\L}ojasiewicz inequality, where the global convergence result is from \cite[Theorem 2.9]{Attouch13} while the rate results are from \cite[Theorem 5]{Attouch-Bolte09}.
\begin{lemma}[Existing global convergence and rate]
\label{Lemma:existing-global-converg}
Let ${\cal L}$ be a proper, lower semicontinuous function, and $\{u^k\}$ be a sequence that satisfies the following three conditions:
\begin{enumerate}
\item[(P1)]
(\textit{Sufficient decrease condition}) there exists a constant $a_1>0$ such that
${\cal L}(u^{k+1}) + a_1 \|u^{k+1}-u^k\|^2 \leq {\cal L}(u^{k}), \ \forall k\in \mathbb{N};$

\item[(P2)] (\textit{Bounded subgradient condition}) for each $k\in \mathbb{N}$, there exists $v^{k+1} \in \partial {\cal L}(u^{k+1})$ such that $\|v^{k+1}\| \leq a_2\|u^{k+1}-u^k\|$ for some constant $a_2>0$;

\item[(P3)] (\textit{Continuity condition}) there exist a subsequence $\{u^{k_j}\}$ and $\tilde{u}$ such that $u^{k_j} \rightarrow \tilde{u}$ and ${\cal L}(u^{k_j}) \rightarrow {\cal L}(\tilde{u})$ as $j\rightarrow \infty$.
\end{enumerate}
If ${\cal L}$ satisfies the K{\L} inequality at $\tilde{u}$ with an exponent of $\theta$, then
\begin{enumerate}
\item[(1)] $\{u^k\}$ converges to $\tilde{u}$; and

\item[(2)] depending on $\theta$,
(i) if $\theta =0$, then $\{u^k\}$ converges within a finite number of iterations; (ii) if $\theta \in (0,\frac{1}{2}]$, then $\|u^k- \tilde{u}\| \leq c \tau^k$ for all $k\geq k_0$, for certain $k_0>0, c>0, \tau \in (0,1)$; and (iii) if $\theta \in (\frac{1}{2},1)$, then $\|u^k - \tilde{u}\| \leq c k^{-\frac{1-\theta}{2\theta-1}}$ for all $k\geq k_0$, for certain $k_0>0, c>0$.
\end{enumerate}
\end{lemma}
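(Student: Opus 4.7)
The plan is to treat this as an instance of the classical Attouch--Bolte framework for descent methods under K{\L}; in fact the author very likely just invokes the two cited theorems, but a self-contained sketch is as follows. First I would normalize by assuming $\mathcal{L}(\tilde{u})=0$ and observe that P1 makes $\{\mathcal{L}(u^k)\}$ nonincreasing, while P3 supplies a subsequential limit; by monotonicity the full sequence $\mathcal{L}(u^k)\downarrow 0$. If $\mathcal{L}(u^{k_0})=0$ for some finite $k_0$, then P1 forces $u^{k}=u^{k_0}$ for all $k\ge k_0$, giving finite termination (which handles the $\theta=0$ case after the usual observation that for $\varphi(s)=cs$ the K{\L} inequality forces $\mathcal{L}$ to drop to its limit in finitely many steps via P1+P2). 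Otherwise $\mathcal{L}(u^k)>0$ for every $k$, and I may assume after reindexing that $u^k$ lies in the K{\L} neighborhood $\mathcal{U}$ of $\tilde{u}$ --- this localization is standard but slightly delicate and will be where the argument needs the most care.

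Once inside $\mathcal{U}$, the core telescoping argument goes through. Set $\Delta_k:=\varphi(\mathcal{L}(u^k))$. Concavity of $\varphi$ gives
\begin{equation*}
\Delta_k-\Delta_{k+1}\ \ge\ \varphi'(\mathcal{L}(u^k))\bigl(\mathcal{L}(u^k)-\mathcal{L}(u^{k+1})\bigr)\ \ge\ \frac{\mathcal{L}(u^k)-\mathcal{L}(u^{k+1})}{\mathrm{dist}(0,\partial\mathcal{L}(u^k))},
\end{equation*}
where the second inequality uses the K{\L} inequality \eqref{Eq:KLIneq}. Plugging in the sufficient decrease from P1 and the subgradient bound from P2, this rearranges to
\begin{equation*}
a_2(\Delta_k-\Delta_{k+1})\,\|u^k-u^{k-1}\|\ \ge\ a_1\,\|u^{k+1}-u^k\|^2,
\end{equation*}
so by the AM--GM inequality $\|u^{k+1}-u^k\|\le \tfrac{1}{2}\|u^k-u^{k-1}\|+\tfrac{a_2}{2a_1}(\Delta_k-\Delta_{k+1})$. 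Summing over $k$ telescopes the $\Delta_k$ terms and yields $\sum_k\|u^{k+1}-u^k\|<\infty$; hence $\{u^k\}$ is Cauchy and, by P3, its limit must be $\tilde{u}$, proving claim~(1).

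For the rate claim~(2), I would substitute $\varphi(s)=cs^{1-\theta}$ so that the K{\L} inequality reads $\mathrm{dist}(0,\partial\mathcal{L}(u^k))\ge c^{-1}(1-\theta)^{-1}\mathcal{L}(u^k)^{\theta}$, and combine it with P1 and P2 to obtain a scalar recursion of the form $r_{k+1}\le r_k-\kappa r_k^{2\theta}$ for the tail sum $r_k:=\sum_{j\ge k}\|u^{j+1}-u^j\|$ (or, equivalently, for $\mathcal{L}(u^k)$). Elementary analysis of this recursion splits into the three regimes $\theta\in(0,\tfrac12]$ (giving a linear bound) and $\theta\in(\tfrac12,1)$ (giving the sublinear bound $k^{-(1-\theta)/(2\theta-1)}$), which is exactly the content of \cite[Theorem~5]{Attouch-Bolte09}. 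The hard part, as flagged above, is the localization step that forces the iterates eventually into $\mathcal{U}$ together with the verification that the tail-sum bound obtained from the K{\L} telescoping argument is small enough to keep them there; the remainder is bookkeeping.
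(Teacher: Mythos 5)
Your proposal is correct, and it matches the paper's treatment: the paper offers no proof of this lemma at all, simply importing the global-convergence part from \cite[Theorem 2.9]{Attouch13} and the rates from \cite[Theorem 5]{Attouch-Bolte09}, and your sketch is precisely the standard Attouch--Bolte descent argument (concavity of $\varphi$ plus K{\L} plus (P1)--(P2), AM--GM, telescoping to finite length, then the scalar recursion on the tail sums for the rates) that underlies those cited results. The one step you rightly flag as delicate --- keeping the iterates inside the K{\L} neighborhood ${\cal U}$ --- is exactly where the cited proofs spend their care, so nothing is missing beyond that acknowledged bookkeeping.
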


\subsubsection{Lemmas on controlling dual ascent by primal descent}

In the following, we establish several lemmas to show that the dual ascent quantities of proposed algorithms can be controlled by the primal descent quantities.

\begin{lemma}[MEAL: controlling dual by primal]
\label{Lemma:dual-control-primal-MEAL}
Let $\{(x^k,z^k,\lambda^k)\}$ be a sequence generated by MEAL \eqref{alg:MEAL-reformulation}. Take $\gamma \in (0,\rho^{-1})$.
\begin{enumerate}
\item[(a)]
Under Assumptions \ref{Assump:feasibleset}, \ref{Assump:MEAL}(a), and \ref{Assump:MEAL}(b), we have for any $k\geq 1$,
\begin{align}
&\|A^T(\lambda^{k+1}-\lambda^{k})\| \leq (L_f+\gamma^{-1})\|x^{k+1}-x^k\|+\gamma^{-1}\|z^k-z^{k-1}\|, \label{Eq:A-lambda-MEAL}\\
&\|\lambda^{k+1}-\lambda^{k}\|^2 \leq 2c_{\gamma,A}^{-1}\left[(\gamma L_f+1)^2\|x^{k+1}-x^k\|^2+\|z^k-z^{k-1}\|^2\right], \label{Eq:lambda-MEAL}
\end{align}
where $c_{\gamma,A} = \gamma^2 \tilde{\sigma}_{\min}(A^TA)$.

\item[(b)]
Alternatively, under Assumptions \ref{Assump:feasibleset}, \ref{Assump:MEAL}(a), and \ref{Assump:MEAL}(c), we have for any $k\geq 1$,
\begin{align}
\|\lambda^{k+1}-\lambda^{k}\|^2 \leq 3c_{\gamma,A}^{-1}\left[4\gamma^2 \hat{L}^2_f + \|x^{k+1}-x^k\|^2+\|z^k-z^{k-1}\|^2 \right]. \label{Eq:lambda-MEAL-S2}
\end{align}
\end{enumerate}
\end{lemma}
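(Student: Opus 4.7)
The plan is to turn the optimality condition of the proximal subproblem into a subgradient inclusion for $f$, then to identify that subgradient as a Moreau-envelope gradient evaluated at a carefully chosen auxiliary point, so that Assumption \ref{Assump:MEAL}(b)--(c) can be invoked. From the $x$-update in \eqref{alg:MEAL-reformulation} I first write down
\[
0 \in \partial f(x^{k+1}) + A^T\lambda^k + \beta_k A^T(Ax^{k+1}-b) + \gamma^{-1}(x^{k+1}-z^k),
\]
and use the dual update $\lambda^{k+1}=\lambda^k+\beta_k(Ax^{k+1}-b)$ to collapse the middle two terms, obtaining $v^{k+1} := \gamma^{-1}(z^k-x^{k+1}) - A^T\lambda^{k+1} \in \partial f(x^{k+1})$.

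The key step is to recognize $v^{k+1}$ as a Moreau-envelope gradient. Setting $w^{k+1} := z^k - \gamma A^T\lambda^{k+1}$, the displayed inclusion rewrites as $\gamma^{-1}(w^{k+1}-x^{k+1})\in\partial f(x^{k+1})$, which is precisely the optimality condition for $x^{k+1} = \mathrm{Prox}_{\gamma,f}(w^{k+1})$; single-valuedness (valid because $\gamma\in(0,\rho^{-1})$ and $f$ is $\rho$-weakly convex) then gives $v^{k+1}=\nabla {\cal M}_{\gamma,f}(w^{k+1})$ and $w^{k+1}\in\mathrm{Prox}_{\gamma,f}^{-1}(x^{k+1})$. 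Subtracting the analogous identity at index $k$ yields
\[
v^{k+1}-v^k = \gamma^{-1}(z^k-z^{k-1}) - \gamma^{-1}(x^{k+1}-x^k) - A^T(\lambda^{k+1}-\lambda^k),
\]
so that a bound on $\|v^{k+1}-v^k\|$ immediately produces a bound on $\|A^T(\lambda^{k+1}-\lambda^k)\|$ by the triangle inequality. Under Assumption \ref{Assump:MEAL}(b) the implicit Lipschitz subgradient property delivers $\|v^{k+1}-v^k\|\le L_f\|x^{k+1}-x^k\|$, which gives \eqref{Eq:A-lambda-MEAL} directly. Under Assumption \ref{Assump:MEAL}(c) the implicit bounded subgradient property gives $\|v^{k+1}\|, \|v^k\|\le \hat{L}_f$, hence $\|v^{k+1}-v^k\|\le 2\hat{L}_f$, which is what I would feed into the same identity in case (b).

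To pass from $\|A^T(\lambda^{k+1}-\lambda^k)\|$ to $\|\lambda^{k+1}-\lambda^k\|$ I use the observation that $\lambda^{k+1}-\lambda^k = \beta_k(Ax^{k+1}-b)$ lies in $\mathrm{Im}(A)$ (since Assumption \ref{Assump:feasibleset} forces $b\in\mathrm{Im}(A)$), so $\|A^T(\lambda^{k+1}-\lambda^k)\|^2 \ge \tilde\sigma_{\min}(A^TA)\,\|\lambda^{k+1}-\lambda^k\|^2$. Multiplying the bound on $\|A^T(\lambda^{k+1}-\lambda^k)\|$ through by $\gamma$, squaring, and applying Young's inequality $(a+b)^2\le 2a^2+2b^2$ (respectively $(a+b+c)^2\le 3a^2+3b^2+3c^2$) yields \eqref{Eq:lambda-MEAL} and \eqref{Eq:lambda-MEAL-S2} with the announced constants $2c_{\gamma,A}^{-1}$ and $3c_{\gamma,A}^{-1}$, respectively.

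The only non-routine part of the argument is the identification of $w^{k+1}=z^k-\gamma A^T\lambda^{k+1}$ as a preimage of $x^{k+1}$ under $\mathrm{Prox}_{\gamma,f}$; once this is in place, the remainder is just the triangle inequality, Young's inequality, and the Im-$A$ eigenvalue bound. I would expect this identification to be the main conceptual step worth highlighting, because it is what justifies invoking the implicit (rather than the full) regularity property of $\partial f$, and thus what makes Assumption \ref{Assump:MEAL}(b)--(c) sufficient here.
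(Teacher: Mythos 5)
Your proposal is correct and follows essentially the same route as the paper's proof: both identify $w^{k+1}=z^k-\gamma A^T\lambda^{k+1}$ as a preimage of $x^{k+1}$ under $\mathrm{Prox}_{\gamma,f}$, express $A^T\lambda^{k+1}$ via $\nabla{\cal M}_{\gamma,f}(w^{k+1})$, take differences, invoke the implicit Lipschitz/bounded subgradient property, and finish with the $\mathrm{Im}(A)$ eigenvalue bound and Young's inequality. The constants all check out, so nothing further is needed.
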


\begin{proof}
The update \eqref{alg:MEAL-reformulation} of $x^{k+1}$ implies
\begin{align*}
x^{k+1} = \argmin_x \left\{f(x)+\langle \lambda^{k}, Ax-b \rangle + \frac{\beta_k}{2}\|Ax-b\|^2 + \frac{1}{2\gamma} \|x-z^k\|^2 \right\}.
\end{align*}
Its optimality condition and the update \eqref{alg:MEAL-reformulation} of $\lambda^{k+1}$ in MEAL together give us
\begin{align}
\label{Eq:optcond-x}
0 \in \partial \left(f+\frac{1}{2\gamma}\|\cdot-(z^k-\gamma A^T\lambda^{k+1})\|^2\right)(x^{k+1}).
\end{align}
Let $w^{k+1}:= z^k-\gamma A^T\lambda^{k+1}, \ \forall k\in \mathbb{N}.$
The above inclusion implies
\begin{align}
\label{Eq:xk+1-prox-wk+1}
x^{k+1} = \mathrm{Prox}_{\gamma,f}(w^{k+1}),
\end{align}
and thus by \eqref{Eq:Moreau-gradient},
\begin{align}
\label{Eq:A-lambda}
A^T\lambda^{k+1}
&= -\nabla {\cal M}_{\gamma,f}(w^{k+1})-\gamma^{-1}(x^{k+1}-z^k),
\end{align}
which further implies
\begin{align*}
\|A^T(\lambda^{k+1}-\lambda^{k})\|
&=\|(\nabla {\cal M}_{\gamma,f}(w^{k+1})-\nabla {\cal M}_{\gamma,f}(w^{k})) + \gamma^{-1}(x^{k+1}-x^k) - \gamma^{-1}(z^k-z^{k-1})\|.
\end{align*}

\textbf{(a)}
With Assumption \ref{Assump:MEAL}(b), the above equality yields
\begin{align*}
\|A^T(\lambda^{k+1}-\lambda^{k})\| \leq (L_f+\gamma^{-1})\|x^{k+1}-x^k\|+\gamma^{-1}\|z^k-z^{k-1}\|,
\end{align*}
which leads to \eqref{Eq:A-lambda-MEAL}.
By Assumption \ref{Assump:feasibleset} and the relation $\lambda^{k+1}-\lambda^k = \beta_k(Ax^{k+1}-b)$, $(\lambda^{k+1}-\lambda^k) \in \mathrm{Im}(A)$. Thus, from the above inequality, we deduce
\begin{align*}
\|\lambda^{k+1}-\lambda^{k}\| \leq \tilde{\sigma}_{\min}^{-1/2}(A^TA)\left[(L_f+\gamma^{-1})\|x^{k+1}-x^k\|+\gamma^{-1}\|z^k-z^{k-1}\| \right],
\end{align*}
and, further by $(u+v)^2 \leq 2(u^2+v^2)$ for any $u,v\in \mathbb{R}$,
\begin{align*}
\|\lambda^{k+1}-\lambda^{k}\|^2 \leq 2\tilde{\sigma}_{\min}^{-1}(A^TA)\left[(L_f+\gamma^{-1})^2\|x^{k+1}-x^k\|^2+\gamma^{-2}\|z^k-z^{k-1}\|^2\right].
\end{align*}

\textbf{(b)}
From Assumption \ref{Assump:MEAL}(c), we have
\begin{align*}
\|A^T(\lambda^{k+1}-\lambda^{k})\| \leq 2\hat{L}_f+\gamma^{-1}(\|x^{k+1}-x^k\|+\|z^k-z^{k-1}\|),
\end{align*}
which implies
\begin{align*}
\|\lambda^{k+1}-\lambda^{k}\| \leq \tilde{\sigma}_{\min}^{-1/2}(A^TA)\left[2\hat{L}_f+\gamma^{-1}(\|x^{k+1}-x^k\|+\|z^k-z^{k-1}\|) \right],
\end{align*}
and further by $(a+c+d)^2 \leq 3(a^2+c^2+d^2)$ for any $a,c,d\in \mathbb{R}$,
\begin{align*}
\|\lambda^{k+1}-\lambda^{k}\|^2 \leq 3\tilde{\sigma}_{\min}^{-1}(A^TA)\left[4\hat{L}^2_f + \gamma^{-2}(\|x^{k+1}-x^k\|^2+\|z^k-z^{k-1}\|^2)\right].
\end{align*}
\end{proof}

The similar lemma also holds for iMEAL shown as follows.
\begin{lemma}[iMEAL: controlling dual by primal]
\label{Lemma:dual-control-primal-iMEAL}
Let $(x^k,z^k,\lambda^k)$ be a sequence generated by iMEAL \eqref{alg:iMEAL}. Take $\gamma \in (0,\rho^{-1})$.
\begin{enumerate}
\item[(a)]
Under  Assumptions \ref{Assump:feasibleset} \ref{Assump:MEAL}(a), and \ref{Assump:MEAL}(b), for any $k\geq 1$,
\begin{align*}
\|\lambda^{k+1}-\lambda^{k}\|^2 \leq 3 c_{\gamma,A}^{-1}\left[(\gamma L_f+1)^2\|x^{k+1}-x^k\|^2+\|z^k-z^{k-1}\|^2 + \gamma^2(\epsilon_k + \epsilon_{k-1})^2\right].
\end{align*}

\item[(b)]
Alternatively, under Assumptions \ref{Assump:feasibleset} \ref{Assump:MEAL}(a), and \ref{Assump:MEAL}(c), for any $k\geq 1$,
\begin{align*}
\|\lambda^{k+1}-\lambda^{k}\|^2 \leq 4c_{\gamma,A}^{-1}\left[4\gamma^2\hat{L}^2_f + \|x^{k+1}-x^k\|^2+\|z^k-z^{k-1}\|^2 + \gamma^2(\epsilon_k + \epsilon_{k-1})^2\right].
\end{align*}
\end{enumerate}
\end{lemma}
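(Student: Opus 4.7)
The plan is to mimic the proof of the MEAL version (Lemma~\ref{Lemma:dual-control-primal-MEAL}), adapting it to accommodate the inexactness residual $s^k$ from \eqref{iMealCond}. The key observation is that, although $x^{k+1}$ is not an exact minimizer of the proximal augmented Lagrangian subproblem, the condition \eqref{iMealCond} still lets me express $x^{k+1}$ as an \emph{exact} proximal point of $f$ evaluated at a perturbed argument; this will reduce the inexact case to the structure already handled by the implicit Lipschitz/bounded subgradient properties.

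Concretely, since $\partial_x {\cal L}_{\beta_k}(x^{k+1},\lambda^k) = \partial f(x^{k+1}) + A^T\lambda^{k+1}$ (using $\lambda^{k+1}=\lambda^k+\beta_k(Ax^{k+1}-b)$), condition \eqref{iMealCond} gives $s^k - A^T\lambda^{k+1} - \gamma^{-1}(x^{k+1}-z^k) \in \partial f(x^{k+1})$. Setting $\tilde w^{k+1} := z^k + \gamma s^k - \gamma A^T\lambda^{k+1}$, this inclusion is exactly the optimality condition for $x^{k+1} = \mathrm{Prox}_{\gamma,f}(\tilde w^{k+1})$, so by \eqref{Eq:Moreau-gradient} one has
\begin{equation*}
A^T\lambda^{k+1} \;=\; s^k - \nabla {\cal M}_{\gamma,f}(\tilde w^{k+1}) - \gamma^{-1}(x^{k+1}-z^k).
\end{equation*}
Subtracting the analogous identity at step $k$ and applying the triangle inequality yields
\begin{equation*}
\|A^T(\lambda^{k+1}-\lambda^k)\| \;\le\; \epsilon_k + \epsilon_{k-1} + \|\nabla {\cal M}_{\gamma,f}(\tilde w^{k+1}) - \nabla {\cal M}_{\gamma,f}(\tilde w^{k})\| + \gamma^{-1}\|x^{k+1}-x^k\| + \gamma^{-1}\|z^k-z^{k-1}\|.
\end{equation*}

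For part (a), the implicit Lipschitz subgradient property (Assumption~\ref{Assump:MEAL}(b)) bounds the Moreau gradient difference by $L_f\|x^{k+1}-x^k\|$, yielding a sum of three nonnegative terms bounded by $(\epsilon_k+\epsilon_{k-1}) + (L_f+\gamma^{-1})\|x^{k+1}-x^k\| + \gamma^{-1}\|z^k-z^{k-1}\|$; squaring with $(a+b+c)^2\le 3(a^2+b^2+c^2)$, using $\lambda^{k+1}-\lambda^k\in\mathrm{Im}(A)$ (from Assumption~\ref{Assump:feasibleset}) to pass from $\|A^T(\cdot)\|^2$ to $\tilde\sigma_{\min}(A^TA)^{-1}\|\cdot\|^2$, and factoring $\gamma^{-2}$ gives \eqref{Eq:lambda-MEAL} with the claimed $\gamma^2(\epsilon_k+\epsilon_{k-1})^2$ term and the constant $c_{\gamma,A}$. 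For part (b), Assumption~\ref{Assump:MEAL}(c) instead bounds the Moreau gradient difference by $2\hat L_f$, producing four nonnegative summands; the estimate $(a+b+c+d)^2\le 4(a^2+b^2+c^2+d^2)$ then yields the constant $4$ and the extra $4\gamma^2\hat L_f^2$ term exactly as stated.

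The one step requiring care is the identification of $\tilde w^{k+1}$: one must pick the \emph{specific} element of $\partial f(x^{k+1})$ singled out by \eqref{iMealCond} and confirm that this element coincides with $\gamma^{-1}(\tilde w^{k+1}-x^{k+1})$, so that $\tilde w^{k+1}$ legitimately lies in $\mathrm{Prox}_{\gamma,f}^{-1}(x^{k+1})$ and the implicit Lipschitz/bounded properties from Definition~\ref{Def:implicit-Lip-bounded-subgrad} apply at both $\tilde w^{k+1}$ and $\tilde w^{k}$. Apart from this bookkeeping, the argument is a straightforward perturbation of Lemma~\ref{Lemma:dual-control-primal-MEAL}, with the factor change from $2$ to $3$ (resp.\ $3$ to $4$) coming solely from absorbing the extra $\|s^k\|+\|s^{k-1}\|$ term into the sum-of-squares inequality.
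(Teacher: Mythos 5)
Your proposal is correct and follows essentially the same route as the paper: the paper's own proof simply notes that \eqref{Eq:optcond-x} becomes $0 \in \partial\bigl(f+\frac{1}{2\gamma}\|\cdot-(z^k-\gamma(A^T\lambda^{k+1}-s^k))\|^2\bigr)(x^{k+1})$ and sets $w^{k+1}:=z^k-\gamma(A^T\lambda^{k+1}-s^k)$, which is exactly your $\tilde w^{k+1}$, before repeating the Lemma \ref{Lemma:dual-control-primal-MEAL} argument. Your additional care about verifying $\tilde w^{k+1}\in\mathrm{Prox}_{\gamma,f}^{-1}(x^{k+1})$ is resolved by the strong convexity of $f+\frac{1}{2\gamma}\|\cdot-\tilde w^{k+1}\|^2$ for $\gamma\in(0,\rho^{-1})$, and the constants $3$ and $4$ arise exactly as you describe.
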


\begin{proof}
The proof is similar to that of Lemma \ref{Lemma:dual-control-primal-MEAL}, but with \eqref{Eq:optcond-x} being replaced by
\begin{align*}
0 \in \partial \left(f+\frac{1}{2\gamma}\left\|\cdot-\Big(z^k-\gamma (A^T\lambda^{k+1}-s^k)\Big)\right\|^2\right)(x^{k+1}),
\end{align*}
and thus $w^{k+1}:= z^k-\gamma (A^T\lambda^{k+1}-s^k).$
\end{proof}

\begin{lemma}[LiMEAL: controlling dual by primal]
\label{Lemma:dual-control-primal-LiMEAL}
Let $\{(x^k,z^k,\lambda^k)\}$ be a sequence generated by LiMEAL \eqref{alg:LiMEAL}.
Take $\gamma \in (0,\rho_g^{-1})$.
\begin{enumerate}
\item[(a)]
Under Assumptions \ref{Assump:feasibleset}, \ref{Assump:LiMEAL}(a)-(b), and \ref{Assump:LiMEAL}(c), for any $k\geq 1$,
\begin{align}
&\|A^T(\lambda^{k+1}-\lambda^{k})\| \label{Eq:A-lambda-LiMEAL}\\
&\leq (L_g+\gamma^{-1})\|x^{k+1}-x^k\|+L_h\|x^k-x^{k-1}\|+\gamma^{-1}\|z^k-z^{k-1}\|, \nonumber\\
&\|\lambda^{k+1}-\lambda^{k}\|^2 \label{Eq:lambda-LiMEAL}\\
&\leq 3c_{\gamma,A}^{-1}\left[(\gamma L_g+1)^2\|x^{k+1}-x^k\|^2+\gamma^2 L_h^2\|x^k-x^{k-1}\|^2+ \|z^k-z^{k-1}\|^2\right]. \nonumber
\end{align}

\item[(b)]
Alternatively, under Assumptions \ref{Assump:feasibleset}, \ref{Assump:LiMEAL}(a)-(b), and \ref{Assump:LiMEAL}(d), for any $k\geq 1$,
\begin{align}
&\|\lambda^{k+1}-\lambda^{k}\|^2 \label{Eq:lambda-LiMEAL-S2}\\
&\leq 4c_{\gamma,A}^{-1}\left[4\gamma^2\hat{L}^2_g + \|x^{k+1}-x^k\|^2+ \gamma^2 L_h^2 \|x^k-x^{k-1}\|^2 +\|z^k-z^{k-1}\|^2\right]. \nonumber
\end{align}
\end{enumerate}
\end{lemma}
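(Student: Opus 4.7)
The plan is to mimic the strategy used in Lemma \ref{Lemma:dual-control-primal-MEAL} for MEAL, but to absorb the effect of the linearization of $h$. The only real difference between LiMEAL and MEAL at the level of the $x$-subproblem is that $\nabla h(x^k)$ appears in place of an element of $\partial h(x^{k+1})$. So I expect an extra term $\nabla h(x^k)-\nabla h(x^{k-1})$ to show up in the bound for $A^T(\lambda^{k+1}-\lambda^k)$, which will be controlled by $L_h\|x^k-x^{k-1}\|$ via Assumption \ref{Assump:LiMEAL}(a). This is what explains the $\gamma^2 L_h^2\|x^k-x^{k-1}\|^2$ terms in \eqref{Eq:lambda-LiMEAL} and \eqref{Eq:lambda-LiMEAL-S2}.

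First, I would write down the optimality condition for the $x^{k+1}$-subproblem in \eqref{alg:LiMEAL}. Using the definition of $f^k$ and ${\cal L}_{\beta_k,f^k}$, together with the dual update $\lambda^{k+1}=\lambda^k+\beta_k(Ax^{k+1}-b)$, one obtains
\begin{equation*}
0 \in \partial g(x^{k+1})+\nabla h(x^k)+A^T\lambda^{k+1}+\gamma^{-1}(x^{k+1}-z^k).
\end{equation*}
Setting $w^{k+1}:=z^k-\gamma(\nabla h(x^k)+A^T\lambda^{k+1})$, this is exactly the optimality condition that identifies $x^{k+1}=\mathrm{Prox}_{\gamma,g}(w^{k+1})$, so \eqref{Eq:xk+1-proxform-LiMEAL} is recovered and, by \eqref{Eq:Moreau-gradient},
\begin{equation*}
A^T\lambda^{k+1}=-\nabla{\cal M}_{\gamma,g}(w^{k+1})-\gamma^{-1}(x^{k+1}-z^k)-\nabla h(x^k).
\end{equation*}
Subtracting the analogous identity at iteration $k-1$ yields
\begin{equation*}
A^T(\lambda^{k+1}-\lambda^k)=-\bigl[\nabla{\cal M}_{\gamma,g}(w^{k+1})-\nabla{\cal M}_{\gamma,g}(w^k)\bigr]-\gamma^{-1}\bigl[(x^{k+1}-x^k)-(z^k-z^{k-1})\bigr]-\bigl[\nabla h(x^k)-\nabla h(x^{k-1})\bigr].
\end{equation*}

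Now I would specialize. For part (a), since $w^{k+1}\in\mathrm{Prox}_{\gamma,g}^{-1}(x^{k+1})$ and $w^k\in\mathrm{Prox}_{\gamma,g}^{-1}(x^k)$, the implicit Lipschitz subgradient property of $g$ gives $\|\nabla{\cal M}_{\gamma,g}(w^{k+1})-\nabla{\cal M}_{\gamma,g}(w^k)\|\le L_g\|x^{k+1}-x^k\|$, and Assumption \ref{Assump:LiMEAL}(a) gives $\|\nabla h(x^k)-\nabla h(x^{k-1})\|\le L_h\|x^k-x^{k-1}\|$. Applying the triangle inequality to the displayed identity yields \eqref{Eq:A-lambda-LiMEAL}. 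Then, since Assumption \ref{Assump:feasibleset} implies $\lambda^{k+1}-\lambda^k=\beta_k(Ax^{k+1}-b)\in\mathrm{Im}(A)$, we get $\|\lambda^{k+1}-\lambda^k\|\le\tilde{\sigma}_{\min}^{-1/2}(A^TA)\|A^T(\lambda^{k+1}-\lambda^k)\|$; squaring and using the three-term inequality $(a+b+c)^2\le 3(a^2+b^2+c^2)$ together with $c_{\gamma,A}=\gamma^2\tilde{\sigma}_{\min}(A^TA)$ produces \eqref{Eq:lambda-LiMEAL}. For part (b), the implicit bounded subgradient property gives $\|\nabla{\cal M}_{\gamma,g}(w^{k+1})-\nabla{\cal M}_{\gamma,g}(w^k)\|\le 2\hat L_g$, so the same identity yields a four-term bound on $\|A^T(\lambda^{k+1}-\lambda^k)\|$, and the four-term inequality $(a+b+c+d)^2\le 4(a^2+b^2+c^2+d^2)$ combined with the same $\mathrm{Im}(A)$ argument delivers \eqref{Eq:lambda-LiMEAL-S2}. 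There is no real obstacle here; the only point that needs care is the bookkeeping of the extra $\nabla h(x^k)-\nabla h(x^{k-1})$ term so that the $\gamma^2 L_h^2\|x^k-x^{k-1}\|^2$ contribution appears with the correct coefficient after squaring.
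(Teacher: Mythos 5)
Your proposal is correct and follows essentially the same route as the paper: the paper's own proof simply says to repeat the argument of Lemma \ref{Lemma:dual-control-primal-MEAL} with the optimality condition modified to $0 \in \partial g(x^{k+1})+\nabla h(x^k)+A^T\lambda^{k+1}+\gamma^{-1}(x^{k+1}-z^k)$ and $w^{k+1}:=z^k-\gamma(A^T\lambda^{k+1}+\nabla h(x^k))$, which is exactly the identity you derive, and your handling of the extra $\nabla h(x^k)-\nabla h(x^{k-1})$ term, the $\mathrm{Im}(A)$ argument, and the three- and four-term Cauchy--Schwarz squarings reproduces the stated constants exactly.
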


\begin{proof}
The proof is also similar to that of Lemma \ref{Lemma:dual-control-primal-MEAL}, but \eqref{Eq:optcond-x} needs to be modified to
\begin{align*}
0 \in \partial \left(g+\frac{1}{2\gamma}\|\cdot-\Big(z^k-\gamma (A^T\lambda^{k+1}+\nabla h(x^k))\Big)\|^2\right)(x^{k+1}),
\end{align*}
and thus $w^{k+1}:= z^k-\gamma (A^T\lambda^{k+1}+\nabla h(x^k))).$
\end{proof}

\subsubsection{Lemmas on One-step Progress}

Here, we provide several lemmas to characterize the progress achieved by a single iterate of the proposed algorithms.
\begin{lemma}[MEAL: one-step progress]
\label{Lemma:1-step-progress-MEAL}
Let $\{(x^k,z^k,\lambda^k)\}$ be a sequence generated by MEAL \eqref{alg:MEAL}. Take Assumption \ref{Assump:MEAL}(a), $\gamma \in (0,\rho^{-1})$, and $\eta\in (0,2)$. Then for any $k\in \mathbb{N}$,
\begin{align}
\label{Eq:1-step-progress-MEAL}
&{\cal P}_{\beta_k}(x^{k},z^k,\lambda^k) - {\cal P}_{\beta_{k+1}}(x^{k+1},z^{k+1},\lambda^{k+1})
\geq \frac{(1-\gamma\rho)}{2\gamma}\|x^{k+1}-x^k\|^2 \\
&+ \frac{1}{4\gamma}(\frac{2}{\eta}-1)\|z^{k+1}-z^k\|^2
+\frac{1}{4}\gamma \eta(2-\eta)\|\nabla \phi_{\beta_k}(z^k,\lambda^k)\|^2
-\alpha_k c_{\gamma,A}\|\lambda^{k+1}-\lambda^k\|^2, \nonumber
\end{align}
where $\alpha_k$ is presented in \eqref{Eq:alphak}
and $c_{\gamma,A} = \gamma^2 \tilde{\sigma}_{\min}(A^TA)$.
\end{lemma}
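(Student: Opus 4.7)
The plan is to telescope the difference ${\cal P}_{\beta_k}(x^k,z^k,\lambda^k) - {\cal P}_{\beta_{k+1}}(x^{k+1},z^{k+1},\lambda^{k+1})$ across the three MEAL updates plus the penalty-parameter change, obtain four clean contributions, and rearrange them to match the stated right-hand side. For the $x$-step, under Assumption \ref{Assump:MEAL}(a) and $\gamma \in (0,\rho^{-1})$, the function $x \mapsto {\cal P}_{\beta_k}(x,z^k,\lambda^k)$ is $(1-\gamma\rho)/\gamma$-strongly convex, since the augmented Lagrangian is $\rho$-weakly convex in $x$ while the proximal term supplies $1/\gamma$-strong convexity. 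Because $x^{k+1}$ is its minimizer, strong convexity at the minimum yields ${\cal P}_{\beta_k}(x^k,z^k,\lambda^k) - {\cal P}_{\beta_k}(x^{k+1},z^k,\lambda^k) \geq \frac{1-\gamma\rho}{2\gamma}\|x^{k+1}-x^k\|^2$, giving the first positive term.

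For the $z$-step, only the proximal term depends on $z$, so ${\cal P}_{\beta_k}(x^{k+1},z^k,\lambda^k) - {\cal P}_{\beta_k}(x^{k+1},z^{k+1},\lambda^k) = \frac{1}{2\gamma}(\|x^{k+1}-z^k\|^2 - \|x^{k+1}-z^{k+1}\|^2)$. Using the identity $z^{k+1} - x^{k+1} = (1-\eta)(z^k - x^{k+1})$ coming from the $z$-update, this expression collapses to $\frac{\eta(2-\eta)}{2\gamma}\|z^k-x^{k+1}\|^2$. I would then split this quantity into two equal halves: one half rewrites as $\frac{1}{4\gamma}(\frac{2}{\eta}-1)\|z^{k+1}-z^k\|^2$ via $\|z^{k+1}-z^k\|^2 = \eta^2\|z^k-x^{k+1}\|^2$, and the other half rewrites as $\frac{\gamma\eta(2-\eta)}{4}\|\nabla_z\phi_{\beta_k}(z^k,\lambda^k)\|^2$ via the identity $\gamma\nabla_z\phi_{\beta_k}(z^k,\lambda^k) = z^k - x^{k+1}$.

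For the $\lambda$-step, the dual update gives ${\cal P}_{\beta_k}(x^{k+1},z^{k+1},\lambda^k) - {\cal P}_{\beta_k}(x^{k+1},z^{k+1},\lambda^{k+1}) = -\langle \lambda^{k+1}-\lambda^k, Ax^{k+1}-b\rangle = -\beta_k^{-1}\|\lambda^{k+1}-\lambda^k\|^2$. For the penalty-parameter change, ${\cal P}_{\beta_k}(x^{k+1},z^{k+1},\lambda^{k+1}) - {\cal P}_{\beta_{k+1}}(x^{k+1},z^{k+1},\lambda^{k+1}) = \frac{\beta_k-\beta_{k+1}}{2}\|Ax^{k+1}-b\|^2 = \frac{\beta_k-\beta_{k+1}}{2\beta_k^2}\|\lambda^{k+1}-\lambda^k\|^2$. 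Summing these two produces the combined coefficient $-\frac{\beta_k+\beta_{k+1}}{2\beta_k^2}\|\lambda^{k+1}-\lambda^k\|^2$.

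Finally, to convert the $\|\nabla_z\phi_{\beta_k}\|^2$ obtained from the $z$-step into the full $\|\nabla\phi_{\beta_k}\|^2 = \|\nabla_z\phi_{\beta_k}\|^2 + \|\nabla_\lambda\phi_{\beta_k}\|^2$ appearing in the lemma, I would add and subtract $\frac{\gamma\eta(2-\eta)}{4}\|\nabla_\lambda\phi_{\beta_k}(z^k,\lambda^k)\|^2 = \frac{\gamma\eta(2-\eta)}{4\beta_k^2}\|\lambda^{k+1}-\lambda^k\|^2$, using $\nabla_\lambda\phi_{\beta_k}(z^k,\lambda^k) = Ax^{k+1}-b$. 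Because $\gamma\eta(2-\eta)/2 = \gamma\eta(1-\eta/2)$, this extra dual-gradient contribution is exactly the slack hidden in the definition \eqref{Eq:alphak} of $\alpha_k$, so the total coefficient on $\|\lambda^{k+1}-\lambda^k\|^2$ collapses to $-\alpha_k c_{\gamma,A}$. The main obstacle is really just bookkeeping: recognizing that the $\|\nabla\phi\|^2$ term in the statement is the full gradient and that its dual component absorbs precisely the slack built into $\alpha_k$, so that no primal-by-dual control (i.e., no appeal to Lemma \ref{Lemma:dual-control-primal-MEAL}) is required at this stage.
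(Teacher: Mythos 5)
Your proposal is correct and follows essentially the same route as the paper: strong convexity of the $x$-subproblem with modulus $\gamma^{-1}-\rho$, exact evaluation of the $z$-, $\lambda$-, and penalty-change differences, and then absorbing the dual component of $\|\nabla\phi_{\beta_k}(z^k,\lambda^k)\|^2$ into the $\|\lambda^{k+1}-\lambda^k\|^2$ coefficient so that it matches $\alpha_k c_{\gamma,A}$. The only cosmetic difference is that the paper writes the $z$-step difference via the inner-product identity $2x^{k+1}-z^k-z^{k+1}=(2\eta^{-1}-1)(z^{k+1}-z^k)$ and then substitutes $(\eta\gamma)^{-2}\|z^{k+1}-z^k\|^2=\|\nabla\phi_{\beta_k}\|^2-\beta_k^{-2}\|\lambda^{k+1}-\lambda^k\|^2$ into half of that term, whereas you split $\frac{\eta(2-\eta)}{2\gamma}\|z^k-x^{k+1}\|^2$ directly; these are algebraically identical.
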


\begin{proof}
By the update \eqref{alg:MEAL-reformulation} of $x^{k+1}$ in MEAL, $x^{k+1}$ is updated via minimizing a strongly convex function ${\cal P}_{\beta_k} (x,z^k,\lambda^{k})$ with modulus at least $(\gamma^{-1}-\rho)$, we have
\begin{align}
\label{Eq:x-descent}
{\cal P}_{\beta_k}(x^k,z^k,\lambda^{k}) - {\cal P}_{\beta_k}(x^{k+1},z^k,\lambda^{k}) \geq \frac{\gamma^{-1}-\rho}{2} \|x^{k+1}-x^k\|^2. %\label{Eq:diff-P-x}
\end{align}
Next, recall in \eqref{alg:MEAL-reformulation}, $z^{k+1} = z^k + \eta(x^{k+1}-z^k)$ implies
\begin{align}
\label{Eq:zk+zk+1}
2x^{k+1}-z^k-z^{k+1} = (2\eta^{-1}-1)(z^{k+1}-z^k).
\end{align}
So we have
\begin{align*}
&{\cal P}_{\beta_k}(x^{k+1},z^k,\lambda^{k}) - {\cal P}_{\beta_k}(x^{k+1},z^{k+1},\lambda^{k})
=\frac{1}{2\gamma}(\|x^{k+1}-z^k\|^2 - \|x^{k+1}-z^{k+1}\|^2)\\
&=\frac{1}{2\gamma} \langle z^{k+1}-z^k, 2x^{k+1}-z^k-z^{k+1}\rangle
=\frac{1}{2\gamma}(\frac{2}{\eta}-1)\|z^{k+1}-z^k\|^2.
\end{align*}
Moreover, by the update $\lambda^{k+1}=\lambda^k +\beta_k(Ax^{k+1}-b)$, we have
\begin{align*}
{\cal P}_{\beta_k}(x^{k+1},z^{k+1},\lambda^k) - {\cal P}_{\beta_k}(x^{k+1},z^{k+1},\lambda^{k+1}) = -\beta_k^{-1}\|\lambda^{k+1}-\lambda^k\|^2,
\end{align*}
and
\begin{align*}
{\cal P}_{\beta_k}(x^{k+1},z^{k+1},\lambda^{k+1}) - {\cal P}_{\beta_{k+1}}(x^{k+1},z^{k+1},\lambda^{k+1}) = \frac{\beta_k-\beta_{k+1}}{2\beta_k^2}\|\lambda^{k+1}-\lambda^k\|^2.
\end{align*}
Combining the above four terms of estimates yields
\begin{align}
\label{Eq:primal-descent-MEAL}
&{\cal P}_{\beta_k}(x^{k},z^k,\lambda^k) - {\cal P}_{\beta_{k+1}}(x^{k+1},z^{k+1},\lambda^{k+1}) \\
&\geq \frac{(1-\rho\gamma)}{2\gamma}\|x^{k+1}-x^k\|^2 + \frac{1}{2\gamma}(\frac{2}{\eta}-1)\|z^{k+1}-z^k\|^2 -\frac{\beta_k+\beta_{k+1}}{2\beta_k^2}\|\lambda^{k+1}-\lambda^k\|^2. \nonumber
\end{align}
Then, we establish \eqref{Eq:1-step-progress-MEAL} from \eqref{Eq:primal-descent-MEAL}.
By the definition \eqref{Eq:stationary-MEAL} of $\nabla \phi_{\beta_k}(z^k,\lambda^k)$, we have
\begin{align*}
\|\nabla \phi_{\beta_k}(z^k,\lambda^k)\|^2 = (\eta \gamma)^{-2} \|z^k - z^{k+1}\|^2 + \beta_k^{-2} \|\lambda^{k+1}-\lambda^k\|^2,
\end{align*}
which implies
\begin{align*}
(\eta \gamma)^{-2} \|z^k - z^{k+1}\|^2 = \|\nabla \phi_{\beta_k}(z^k,\lambda^k)\|^2 - \beta_k^{-2} \|\lambda^{k+1}-\lambda^k\|^2.
\end{align*}
Substituting this into the above inequality yields
\begin{align*}
&{\cal P}_{\beta_k}(x^{k},z^k,\lambda^k) - {\cal P}_{\beta_{k+1}}(x^{k+1},z^{k+1},\lambda^{k+1})
\geq \frac{(1-\gamma\rho)}{2\gamma}\|x^{k+1}-x^k\|^2 \nonumber \\
&+ \frac{1}{4\gamma}(\frac{2}{\eta}-1)\|z^{k+1}-z^k\|^2
+\frac{1}{4}\gamma \eta(2-\eta)\|\nabla \phi_{\beta_k}(z^k,\lambda^k)\|^2
-\alpha_k c_{\gamma,A}\|\lambda^{k+1}-\lambda^k\|^2,
\end{align*}
where $\alpha_k = \frac{\beta_k+\beta_{k+1}+\gamma \eta(1-\eta/2)}{2c_{\gamma,A}\beta_k^2}$.
This finishes the proof.
\end{proof}

Next, we provide a lemma for iMEAL \eqref{alg:iMEAL}.
\begin{lemma}[iMEAL: one-step progress]
\label{Lemma:1-step-progress-iMEAL}
Let $\{(x^k,z^k,\lambda^k)\}$ be a sequence generated by iMEAL \eqref{alg:iMEAL}.
Take Assumptions \ref{Assump:MEAL}(a) and (b), $\gamma \in (0,\rho^{-1})$, and $\eta \in (0,2)$. It holds that
\begin{align}
\label{Eq:1-step-progress-iMEAL}
&{\cal P}_{\beta_k}(x^{k},z^k,\lambda^k) - {\cal P}_{\beta_{k+1}}(x^{k+1},z^{k+1},\lambda^{k+1})\\
&\geq \frac{(1-\gamma\rho)}{2\gamma}\|x^{k+1}-x^k\|^2 +\langle s^k, x^k - x^{k+1} \rangle
+ \frac{1}{4\gamma}(\frac{2}{\eta}-1)\|z^{k+1}-z^k\|^2 \nonumber \\
&+\frac{1}{2}\gamma \eta(1-\eta/2)\|\nabla \phi_{\beta_k}(z^k,\lambda^k)\|^2
-\alpha_k c_{\gamma,A}\|\lambda^{k+1}-\lambda^k\|^2, \ \forall k\in \mathbb{N}. \nonumber
\end{align}
\end{lemma}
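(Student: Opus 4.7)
The strategy is to mimic the proof of Lemma \ref{Lemma:1-step-progress-MEAL}, modifying only the $x$-descent step to account for the fact that $x^{k+1}$ is an approximate rather than exact minimizer of ${\cal P}_{\beta_k}(\cdot,z^k,\lambda^k)$. Everything else, namely the identities governing the $z$-update, the $\lambda$-update, and the change of penalty parameter, carries over verbatim.

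First, observe that by \eqref{iMealCond} we have
$s^k\in \partial_x{\cal L}_{\beta_k}(x^{k+1},\lambda^k)+\gamma^{-1}(x^{k+1}-z^k)=\partial_x{\cal P}_{\beta_k}(x^{k+1},z^k,\lambda^k).$
Since $f$ is $\rho$-weakly convex with $\gamma\in(0,\rho^{-1})$, the function $x\mapsto{\cal P}_{\beta_k}(x,z^k,\lambda^k)$ is strongly convex with modulus at least $\gamma^{-1}-\rho$. The strong-convexity subgradient inequality applied at $x^{k+1}$ (with the subgradient $s^k$) and the point $x^k$ yields
\begin{align*}
{\cal P}_{\beta_k}(x^k,z^k,\lambda^k)-{\cal P}_{\beta_k}(x^{k+1},z^k,\lambda^k)\ge \langle s^k,x^k-x^{k+1}\rangle+\tfrac{1-\gamma\rho}{2\gamma}\|x^{k+1}-x^k\|^2.
\end{align*}
This is the iMEAL analogue of \eqref{Eq:x-descent}, differing only by the linear remainder $\langle s^k,x^k-x^{k+1}\rangle$ that records the inexactness.

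Next, I would reuse the three identities from the proof of Lemma \ref{Lemma:1-step-progress-MEAL} without modification, since the $z$- and $\lambda$-updates in iMEAL \eqref{alg:iMEAL} are structurally identical to those in MEAL \eqref{alg:MEAL-reformulation}. Namely: the identity $2x^{k+1}-z^k-z^{k+1}=(2/\eta-1)(z^{k+1}-z^k)$ together with the definition of ${\cal P}_{\beta_k}$ yields
${\cal P}_{\beta_k}(x^{k+1},z^k,\lambda^k)-{\cal P}_{\beta_k}(x^{k+1},z^{k+1},\lambda^k)=\tfrac{1}{2\gamma}(\tfrac{2}{\eta}-1)\|z^{k+1}-z^k\|^2$;
the dual update gives ${\cal P}_{\beta_k}(x^{k+1},z^{k+1},\lambda^k)-{\cal P}_{\beta_k}(x^{k+1},z^{k+1},\lambda^{k+1})=-\beta_k^{-1}\|\lambda^{k+1}-\lambda^k\|^2$;
and the penalty-change step gives ${\cal P}_{\beta_k}(x^{k+1},z^{k+1},\lambda^{k+1})-{\cal P}_{\beta_{k+1}}(x^{k+1},z^{k+1},\lambda^{k+1})=\tfrac{\beta_k-\beta_{k+1}}{2\beta_k^2}\|\lambda^{k+1}-\lambda^k\|^2$. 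Adding these four estimates produces the analogue of \eqref{Eq:primal-descent-MEAL} with the extra term $\langle s^k,x^k-x^{k+1}\rangle$ on the right-hand side.

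Finally, I would convert this inequality into the claimed bound in terms of $\|\nabla\phi_{\beta_k}(z^k,\lambda^k)\|^2$. Using the identity from the proof of Lemma \ref{Lemma:1-step-progress-MEAL},
$\|\nabla\phi_{\beta_k}(z^k,\lambda^k)\|^2=(\eta\gamma)^{-2}\|z^{k+1}-z^k\|^2+\beta_k^{-2}\|\lambda^{k+1}-\lambda^k\|^2$ (noting that $z^{k+1}-z^k=-\eta(z^k-x^{k+1})=-\eta\gamma\nabla_z\phi_{\beta_k}(z^k,\lambda^k)$), I would substitute $(\eta\gamma)^{-2}\|z^{k+1}-z^k\|^2=\|\nabla\phi_{\beta_k}(z^k,\lambda^k)\|^2-\beta_k^{-2}\|\lambda^{k+1}-\lambda^k\|^2$ into the $\|z^{k+1}-z^k\|^2$ coefficient, splitting it as $\tfrac{1}{2\gamma}(\tfrac{2}{\eta}-1)=\tfrac{1}{4\gamma}(\tfrac{2}{\eta}-1)+\tfrac{1}{4\gamma}(\tfrac{2}{\eta}-1)$ so that one copy of $\|z^{k+1}-z^k\|^2$ is exchanged for $\tfrac{1}{2}\gamma\eta(1-\eta/2)\|\nabla\phi_{\beta_k}(z^k,\lambda^k)\|^2$ minus a $\|\lambda^{k+1}-\lambda^k\|^2$ correction. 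Collecting the $\|\lambda^{k+1}-\lambda^k\|^2$ coefficient gives exactly $-\alpha_kc_{\gamma,A}$, with $\alpha_k$ as in \eqref{Eq:alphak}. There is no real obstacle here; the only care needed is bookkeeping of the $\|\lambda^{k+1}-\lambda^k\|^2$ coefficient and verifying that the identity $\tfrac{1}{2}\gamma\eta(1-\eta/2)=\tfrac{1}{4}\gamma\eta(2-\eta)$ makes the gradient-norm coefficient match the form stated in \eqref{Eq:1-step-progress-iMEAL}.
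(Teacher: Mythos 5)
Your proposal is correct and follows essentially the same route as the paper: the paper likewise replaces the exact-minimizer descent inequality \eqref{Eq:x-descent} with the strong-convexity subgradient inequality at $x^{k+1}$ with subgradient $s^k$ (producing the extra term $\langle s^k, x^k-x^{k+1}\rangle$), and then reuses the remaining three identities and the $\|\nabla\phi_{\beta_k}\|^2$ substitution from the proof of Lemma \ref{Lemma:1-step-progress-MEAL} verbatim. Your bookkeeping of the $\|\lambda^{k+1}-\lambda^k\|^2$ coefficient into $-\alpha_k c_{\gamma,A}$ and the identity $\tfrac{1}{2}\gamma\eta(1-\eta/2)=\tfrac{1}{4}\gamma\eta(2-\eta)$ match the paper exactly.
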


\begin{proof}
The proof of this lemma is similar to that of Lemma \ref{Lemma:1-step-progress-MEAL} and uses the descent quantity along the update of $x^{k+1}$.
By the update \eqref{alg:iMEAL} of $x^{k+1}$ in iMEAL and noticing that ${\cal L}_{\beta_k}(x,\lambda^k)+ \frac{\|x-z^k\|}{2\gamma}$ is strongly convex with modulus at least $(\gamma^{-1}-\rho)$, we have
\begin{align*}
{\cal P}_{\beta_k}(x^k,z^k,\lambda^k)
& \geq {\cal P}_{\beta_k}(x^{k+1},z^k,\lambda^k)+\langle s^k, x^k - x^{k+1} \rangle+ \frac{\gamma^{-1}-\rho}{2}\|x^{k+1}-x^k\|^2.
\end{align*}
By replacing \eqref{Eq:x-descent} in the proof of Lemma \ref{Lemma:1-step-progress-MEAL} with the above inequality and following the rest part of its proof, we obtain the following inequality
\begin{align*}
&{\cal P}_{\beta_k}(x^{k},z^k,\lambda^k) - {\cal P}_{\beta_{k+1}}(x^{k+1},z^{k+1},\lambda^{k+1})
\geq \frac{1-\gamma\rho}{2\gamma}\|x^{k+1}-x^k\|^2 +\langle s^k, x^k - x^{k+1} \rangle \nonumber \\
&+ \frac{1}{2\gamma}(\frac{2}{\eta}-1)\|z^{k+1}-z^k\|^2 -\frac{\beta_k+\beta_{k+1}}{2\beta_k^2}\|\lambda^{k+1}-\lambda^k\|^2.
\end{align*}
We can establish \eqref{Eq:1-step-progress-iMEAL} with a derivation similar to that in the proof of Lemma \ref{Lemma:1-step-progress-MEAL}.
\end{proof}

Also, we state a similar lemma for one-step progress of LiMEAL \eqref{alg:LiMEAL} as follows.
\begin{lemma}[LiMEAL: one-step progress]
\label{Lemma:1-step-progress-LiMEAL}
Let $\{(x^k,z^k,\lambda^k)\}$ be a sequence generated by LiMEAL \eqref{alg:LiMEAL}.
Take Assumptions \ref{Assump:LiMEAL}(a) and (b), $\gamma \in (0,\rho_g^{-1})$, and $\eta \in (0,2)$. We have
\begin{align}
\label{Eq:1-step-progress-LiMEAL}
&{\cal P}_{\beta_k}(x^{k},z^k,\lambda^k) - {\cal P}_{\beta_{k+1}}(x^{k+1},z^{k+1},\lambda^{k+1})\\
&\geq \left(\frac{1-\gamma(\rho_g+L_h)}{2\gamma} - \frac{1}{4}\gamma (2-\eta)\eta L_h^2\right)\|x^{k+1}-x^k\|^2 \nonumber\\
&+ \frac{1}{4\gamma}(\frac{2}{\eta}-1)\|z^{k+1}-z^k\|^2
+ \frac{1}{4}\gamma (1-\eta_k/2)\eta \|g_{\mathrm{limeal}}^k\|^2- \alpha_k c_{\gamma,A}\|\lambda^{k+1}-\lambda^k\|^2, \ \forall k\in \mathbb{N}. \nonumber
\end{align}
\end{lemma}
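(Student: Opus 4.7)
The plan is to mirror the one-step progress proof of MEAL (Lemma~\ref{Lemma:1-step-progress-MEAL}), with two modifications to absorb the linearization of $h$. First, I would derive a primal descent in the linearized subproblem: since $g$ is $\rho_g$-weakly convex and the remaining terms of the objective ${\cal L}_{\beta_k,f^k}(\cdot,\lambda^k)+\frac{1}{2\gamma}\|\cdot-z^k\|^2$ are convex quadratics in $x$, the subproblem is $(\gamma^{-1}-\rho_g)$-strongly convex in $x$. Since $x^{k+1}$ is its global minimizer, this gives ${\cal L}_{\beta_k,f^k}(x^k,\lambda^k)+\frac{1}{2\gamma}\|x^k-z^k\|^2-{\cal L}_{\beta_k,f^k}(x^{k+1},\lambda^k)-\frac{1}{2\gamma}\|x^{k+1}-z^k\|^2\ge \frac{\gamma^{-1}-\rho_g}{2}\|x^{k+1}-x^k\|^2$. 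I would then convert this back to the un-linearized ${\cal P}_{\beta_k}$: the descent lemma (Assumption~\ref{Assump:LiMEAL}(a)) gives $f(x^{k+1})-f^k(x^{k+1})\le \frac{L_h}{2}\|x^{k+1}-x^k\|^2$, while $f(x^k)=f^k(x^k)$, yielding ${\cal P}_{\beta_k}(x^k,z^k,\lambda^k)-{\cal P}_{\beta_k}(x^{k+1},z^k,\lambda^k)\ge \frac{1-\gamma(\rho_g+L_h)}{2\gamma}\|x^{k+1}-x^k\|^2$.

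The $z$-update identity~\eqref{Eq:zk+zk+1} and the $\lambda$-update-with-varying-penalty computation are purely algebraic and carry over verbatim from the MEAL proof, contributing $\frac{1}{2\gamma}(\frac{2}{\eta}-1)\|z^{k+1}-z^k\|^2$ and $-\frac{\beta_k+\beta_{k+1}}{2\beta_k^2}\|\lambda^{k+1}-\lambda^k\|^2$ respectively. Summing the three contributions produces a preliminary descent estimate analogous to~\eqref{Eq:primal-descent-MEAL} but with $\rho$ replaced by $\rho_g+L_h$. To surface $\|g_{\mathrm{limeal}}^k\|^2$, I would use~\eqref{Eq:stationary-LiMEAL} together with the relation $z^k-x^{k+1}=\eta^{-1}(z^k-z^{k+1})$ to write the first block of $g_{\mathrm{limeal}}^k$ as $(\eta\gamma)^{-1}(z^k-z^{k+1})+(\nabla h(x^{k+1})-\nabla h(x^k))$. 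Unlike the MEAL case where one has the exact identity $\|\nabla\phi\|^2=(\eta\gamma)^{-2}\|z^{k+1}-z^k\|^2+\beta_k^{-2}\|\lambda^{k+1}-\lambda^k\|^2$, the extra $\nabla h$ gap only yields an inequality: the bound $(a+b)^2\le 2a^2+2b^2$ combined with the Lipschitz property of $\nabla h$ gives $\|g_{\mathrm{limeal}}^k\|^2\le 2(\eta\gamma)^{-2}\|z^{k+1}-z^k\|^2+2L_h^2\|x^{k+1}-x^k\|^2+\beta_k^{-2}\|\lambda^{k+1}-\lambda^k\|^2$, which rearranges to the lower bound $(\eta\gamma)^{-2}\|z^{k+1}-z^k\|^2\ge \frac{1}{2}\|g_{\mathrm{limeal}}^k\|^2-L_h^2\|x^{k+1}-x^k\|^2-\frac{1}{2}\beta_k^{-2}\|\lambda^{k+1}-\lambda^k\|^2$.

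As in the MEAL proof, I would split $\frac{1}{2\gamma}(\frac{2}{\eta}-1)\|z^{k+1}-z^k\|^2$ into two equal halves and substitute the above lower bound (after multiplication by $\eta^2\gamma^2$) into one half, producing a contribution $\frac{\gamma\eta(2-\eta)}{8}\|g_{\mathrm{limeal}}^k\|^2-\frac{\gamma\eta(2-\eta)L_h^2}{4}\|x^{k+1}-x^k\|^2-\frac{\gamma\eta(2-\eta)}{8\beta_k^2}\|\lambda^{k+1}-\lambda^k\|^2$. Collecting terms gives the claimed $x$-coefficient $\frac{1-\gamma(\rho_g+L_h)}{2\gamma}-\frac{1}{4}\gamma(2-\eta)\eta L_h^2$ and the $\|g_{\mathrm{limeal}}^k\|^2$-coefficient $\frac{1}{4}\gamma(1-\eta/2)\eta$, which is exactly half the MEAL value—the factor-of-two loss being the irreducible price of Young's inequality. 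For the dual coefficient, the combined total $\frac{\gamma\eta(2-\eta)+4(\beta_k+\beta_{k+1})}{8\beta_k^2}$ is strictly smaller than $\alpha_k c_{\gamma,A}=\frac{2\gamma\eta(2-\eta)+4(\beta_k+\beta_{k+1})}{8\beta_k^2}$, so replacing it by $\alpha_k c_{\gamma,A}$ only weakens the inequality to the form stated. The main obstacle is the careful coefficient bookkeeping: in particular, verifying that the penalty $L_h^2\|x^{k+1}-x^k\|^2$ emerging from Young's inequality combines with the linearization loss in the $x$-descent to yield precisely the stated clean coefficient, and that the $\|\lambda^{k+1}-\lambda^k\|^2$-coefficient, together with the extra $\frac{\gamma\eta(2-\eta)}{8\beta_k^2}$ dumped from the substitution, falls under $\alpha_k c_{\gamma,A}$ for exactly the value of $\alpha_k$ prescribed in~\eqref{Eq:alphak}.
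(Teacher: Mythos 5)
Your proposal is correct and follows essentially the same route as the paper's proof: strong convexity of the linearized subproblem plus the descent lemma for $h$ to get the primal decrease with modulus $\frac{1-\gamma(\rho_g+L_h)}{2\gamma}$, the same $z$- and $\lambda$-update algebra as in the MEAL lemma, and then Young's inequality on the first block of $g_{\mathrm{limeal}}^k$ to obtain $(\gamma\eta)^{-2}\|z^{k+1}-z^k\|^2 \geq \frac{1}{2}\|g_{\mathrm{limeal}}^k\|^2 - L_h^2\|x^{k+1}-x^k\|^2 - \frac{1}{2}\beta_k^{-2}\|\lambda^{k+1}-\lambda^k\|^2$, substituted into half of the $z$-term. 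Your coefficient bookkeeping (including the observation that the resulting dual coefficient is dominated by $\alpha_k c_{\gamma,A}$) matches the paper's.
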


\begin{proof}
The proof of this lemma is similar to that of Lemma \ref{Lemma:1-step-progress-MEAL}.
By the update \eqref{alg:LiMEAL} of $x^{k+1}$ in LiMEAL,
$x^{k+1}$ is updated via minimizing $(\gamma^{-1}-\rho_g)$-strongly convex ${\cal L}_{\beta_k,f^k}(x,\lambda^k)+ \frac{\|x-z^k\|}{2\gamma}$, so
\begin{align*}
{\cal L}_{\beta_k,f^k}(x^k,\lambda^k)+ \frac{\|x^k-z^k\|^2}{2\gamma}
\geq {\cal L}_{\beta_k,f^k}(x^{k+1},\lambda^k)+ \frac{\|x-z^k\|^2}{2\gamma} + \frac{\gamma^{-1}-\rho_g}{2}\|x^{k+1}-x^k\|^2.
\end{align*}
By definition, ${\cal L}_{\beta_k,f^k}(x,\lambda) = h(x^k)+\langle \nabla h(x^k),x-x^k\rangle + g(x) + \langle \lambda, Ax-b\rangle + \frac{\beta}{2}\|Ax-b\|^2$ and ${\cal P}_{\beta_k}(x,z,\lambda) = h(x)+g(x)+ \langle \lambda, Ax-b\rangle + \frac{\beta_k}{2}\|Ax-b\|^2+\frac{\|x-z\|^2}{2\gamma}$, so the above inequality implies
\begin{align*}
{\cal P}_{\beta_k}(x^k,z^k,\lambda^k)
& \geq {\cal P}_{\beta_k}(x^{k+1},z^k,\lambda^k)+ \frac{\gamma^{-1}-\rho_g}{2}\|x^{k+1}-x^k\|^2 \nonumber\\
&- (h(x^{k+1}) - h(x^k) - \langle \nabla h(x^k),x^{k+1}-x^k\rangle) \nonumber\\
&\geq {\cal P}_{\beta_k}(x^{k+1},z^k,\lambda^k)+ \frac{\gamma^{-1}-\rho_g-L_h}{2}\|x^{k+1}-x^k\|^2,
\end{align*}
where the second inequality is due to the $L_h$-Lipschitz continuity of $\nabla h$.
By replacing \eqref{Eq:x-descent} in the proof of Lemma \ref{Lemma:1-step-progress-MEAL} with the above inequality and following the rest part of that proof, we obtain
\begin{align}
&{\cal P}_{\beta_k}(x^{k},z^k,\lambda^k) - {\cal P}_{\beta_{k+1}}(x^{k+1},z^{k+1},\lambda^{k+1}) \label{Eq:primal-descent-LiMEAL}\\
&\geq \frac{1-\gamma(\rho_g+L_h)}{2\gamma}\|x^{k+1}-x^k\|^2 + \frac{1}{2\gamma}(\frac{2}{\eta}-1)\|z^{k+1}-z^k\|^2 -\frac{\beta_k+\beta_{k+1}}{2\beta_k^2}\|\lambda^{k+1}-\lambda^k\|^2. \nonumber
\end{align}

Next, based on the above inequality, we establish \eqref{Eq:1-step-progress-LiMEAL}.
By the definition \eqref{Eq:stationary-LiMEAL} of $g_{\mathrm{limeal}}^k$ and noticing that $z^k-x^{k+1} = -\eta^{-1}(z^{k+1}-z^k) $ by the update \eqref{alg:LiMEAL} of $z^{k+1}$, we have
\[
\|g_{\mathrm{limeal}}^k\|^2 \leq 2L_h^2 \|x^{k+1}-x^k\|^2 + 2 (\gamma\eta)^{-2}\|z^{k+1}-z^k\|^2 + \beta_k^{-2}\|\lambda^{k+1}-\lambda^k\|^2,
\]
which implies
\[
(\gamma\eta)^{-2}\|z^{k+1}-z^k\|^2 \geq \frac{1}{2} \|g_{\mathrm{limeal}}^k\|^2 - \frac{1}{2}\beta_k^{-2}\|\lambda^{k+1}-\lambda^k\|^2 - L_h^2 \|x^{k+1}-x^k\|^2.
\]
Substituting this inequality into \eqref{Eq:primal-descent-LiMEAL} yields
\begin{align*}
&{\cal P}_{\beta_k}(x^{k},z^k,\lambda^k) - {\cal P}_{\beta_{k+1}}(x^{k+1},z^{k+1},\lambda^{k+1})\\
&\geq \left(\frac{1-\gamma(\rho_g+L_h)}{2\gamma} - \frac{1}{4}\gamma (2-\eta)\eta L_h^2\right)\|x^{k+1}-x^k\|^2 \\
&+ \frac{1}{4\gamma}(\frac{2}{\eta}-1)\|z^{k+1}-z^k\|^2
+ \frac{1}{4}\gamma (1-\eta/2)\eta \|g_{\mathrm{limeal}}^k\|^2 - \alpha_k c_{\gamma,A}\|\lambda^{k+1}-\lambda^k\|^2.
\end{align*}
This finishes the proof of this lemma.
\end{proof}

\subsection{Proofs for Convergence of MEAL}

Based on the above lemmas, we give proofs of Theorem \ref{Theorem:Convergence-MEAL} and Proposition \ref{Proposition:globalconv-MEAL}.

\subsubsection{Proof of Theorem \ref{Theorem:Convergence-MEAL}}
\begin{proof}
We first establish the $o(1/\sqrt{k})$ rate of convergence under the \textit{implicit Lipschitz subgradient} assumption (Assumption \ref{Assump:MEAL}(b)) and then the convergence rate result under the \textit{implicit bounded subgradient} assumption (Assumption \ref{Assump:MEAL}(c)).

\textbf{(a)}
In the first case, $\beta_k = \beta$ and $\alpha_k = \alpha$.
Substituting \eqref{Eq:lambda-MEAL} into \eqref{Eq:1-step-progress-MEAL} yields
\begin{align*}
&{\cal P}_{\beta}(x^{k},z^k,\lambda^k) - {\cal P}_{\beta}(x^{k+1},z^{k+1},\lambda^{k+1})
\geq \frac{1}{2}\gamma \eta(1-\eta/2)\|\nabla \phi_{\beta}(z^k,\lambda^k)\|^2\\
&+\left(\frac{(1-\gamma\rho)}{2\gamma} - 2\alpha (1+\gamma L_f)^2  \right)\|x^{k+1}-x^k\|^2
+ \frac{1}{4\gamma}(\frac{2}{\eta}-1)\|z^{k+1}-z^k\|^2 \\
&- 2\alpha \|z^k-z^{k-1}\|^2.
\end{align*}
By the definition \eqref{Eq:Lyapunov-seq-MEAL-S1} of ${\cal E}_{\mathrm{meal}}^k$, the above inequality implies
\begin{align}
\label{Eq:descent-MEAL-S1*}
{\cal E}_{\mathrm{meal}}^k - {\cal E}_{\mathrm{meal}}^{k+1}
&\geq \frac{1}{2}\gamma \eta(1-\eta/2)\|\nabla \phi_{\beta}(z^k,\lambda^k)\|^2 + \left(\frac{1}{4\gamma}(\frac{2}{\eta}-1) -2\alpha \right)\|z^{k+1}-z^k\|^2 \nonumber\\
&+\left(\frac{1-\gamma\rho}{2\gamma} - 2\alpha (1+\gamma L_f)^2  \right)\|x^{k+1}-x^k\|^2 \\
&\geq \frac{1}{2}\gamma \eta(1-\eta/2)\|\nabla \phi_{\beta}(z^k,\lambda^k)\|^2, \nonumber
\end{align}
where the second inequality holds due to the condition on $\alpha$.
Thus, claim (a) follows from the above inequality, Lemma \ref{Lemma:sequence-fixed} with $\tilde{\epsilon}_k=0$ and the lower boundedness of $\{{\cal E}_{\mathrm{meal}}^k\}$.

\textbf{(b)}
Similarly,
substituting \eqref{Eq:lambda-MEAL-S2} into \eqref{Eq:1-step-progress-MEAL} and using the definition \eqref{Eq:Lyapunov-seq-MEAL-S2} of $\tilde{\cal E}_{\mathrm{meal}}^k$, we have
\begin{align*}
&\tilde{\cal E}_{\mathrm{meal}}^k - \tilde{\cal E}_{\mathrm{meal}}^{k+1} \geq \frac{1}{2}\gamma \eta(1-\eta/2)\|\nabla \phi_{\beta_k}(z^k,\lambda^k)\|^2 - 12 \alpha_k \gamma^2 \hat{L}_f^2\\
&+\left(\frac{1-\gamma\rho}{2\gamma} - 3\alpha_k  \right)\|x^{k+1}-x^k\|^2
+ \left(\frac{1}{4\gamma}(\frac{2}{\eta}-1) - 3\alpha_{k+1} \right)\|z^{k+1}-z^k\|^2.
\end{align*}
With $\alpha_k = \frac{\alpha^*}{K}$,
\begin{align*}
&\tilde{\cal E}_{\mathrm{meal}}^k - \tilde{\cal E}_{\mathrm{meal}}^{k+1}
\geq \frac{1}{2}\gamma (1-\eta/2)\eta\|\nabla \phi_{\beta_k}(z^k,\lambda^k)\|^2 - 12 \alpha_k \gamma^2 \hat{L}_f^2,
\end{align*}
which yields  claim (b) by Lemma \ref{Lemma:sequence-varying} with $\tilde{\epsilon}_k=0$ and the lower boundedness of $\{\tilde{\cal E}_{\mathrm{meal}}^k\}$.
\end{proof}

\subsubsection{Proof of Proposition \ref{Proposition:globalconv-MEAL}}

\begin{proof}
With Lemma \ref{Lemma:existing-global-converg}, we only need to check conditions $(P1)$-$(P3)$ hold for MEAL.

\textbf{(a) Establishing $(P1)$}: With $a:= \frac{\gamma \eta(2-\eta)}{4\beta}$, we have $\frac{1+a}{\beta c_{\gamma,A}}=\alpha$ for $\alpha$ in \eqref{Eq:alpha}.
Substituting \eqref{Eq:lambda-MEAL} into \eqref{Eq:primal-descent-MEAL} with fixed $\beta_k$ yields
\begin{align*}
&{\cal P}_{\beta}(x^k,z^k,\lambda^k) - {\cal P}_{\beta}(x^{k+1},z^{k+1},\lambda^{k+1}) \geq (\frac{1-\rho\gamma}{2\gamma}-2\alpha (\gamma L_f+1)^2)\|x^{k+1}-x^k\|^2\\
& + \frac{1}{2\gamma}(\frac{2}{\eta}-1)\|z^{k+1}-z^k\|^2 - 2\alpha \|z^k-z^{k-1}\|^2 + a\beta^{-1}\|\lambda^{k+1}-\lambda^k\|^2.
\end{align*}
For the definition \eqref{Eq:Lyapunov-fun-MEAL} of ${\cal P}_{\mathrm{meal}}$ and the assumption on $\alpha$, we deduce from the above inequality:
\begin{align}
\label{Eq:sufficient-descent-MEAL}
&{\cal P}_{\mathrm{meal}}(y^k) - {\cal P}_{\mathrm{meal}}(y^{k+1})\geq (\frac{1-\rho\gamma}{2\gamma}-2\alpha (\gamma L_f+1)^2)\|x^{k+1}-x^k\|^2 \nonumber\\
& + \left( \frac{1}{2\gamma}(\frac{2}{\eta}-1) - 3\alpha \right)\|z^{k+1}-z^k\|^2 + \alpha \|z^k-z^{k-1}\|^2 + a\beta^{-1}\|\lambda^{k+1}-\lambda^k\|^2 \nonumber\\
&\geq c_1\|y^{k+1}-y^k\|^2,
\end{align}
where $c_1:= \min\left\{\frac{1-\rho\gamma}{2\gamma}-2\alpha (\gamma L_f +1)^2, \alpha, a\beta^{-1}\right\}$ by $\frac{1}{2\gamma}(\frac{2}{\eta}-1) -3\alpha \geq \alpha$. This yields $(P1)$ for MEAL.

\textbf{(b) Establishing $(P2)$}:
Note that ${\cal P}_{\mathrm{meal}}(y) = f(x)+\langle \lambda,Ax-b\rangle + \frac{\beta}{2}\|Ax-b\|^2+\frac{1}{2\gamma}\|x-z\|^2 + 3\alpha \|z - \hat{z}\|^2$.
The optimality condition from the update of $x^{k+1}$ in \eqref{alg:MEAL-reformulation} is
\begin{align*}
0\in \partial f(x^{k+1}) + A^T\lambda^{k+1} + \gamma^{-1}(x^{k+1}-z^k),
\end{align*}
which implies $\gamma^{-1}(z^k - z^{k+1}) + A^T(\lambda^{k+1}-\lambda^{k}) \in \partial_x {\cal P}_{\mathrm{meal}}(y^{k+1}).$
From the update of $z^{k+1}$ in \eqref{alg:MEAL-reformulation}, $z^{k+1}-x^{k+1}=-(1-\eta)\eta^{-1}(z^{k+1}-z^k)$ and thus
\begin{align*}
\partial_z {\cal P}_{\mathrm{meal}}(y^{k+1})
= \gamma^{-1}(z^{k+1}-x^{k+1}) + 6\alpha (z^{k+1}-z^k)
=\left(6\alpha  - \frac{1-\eta}{\eta\gamma}\right)(z^{k+1}-z^k).
\end{align*}
The update of $\lambda^{k+1}$ in \eqref{alg:MEAL-reformulation} yields $\partial_\lambda {\cal P}_{\mathrm{meal}}(y^{k+1}) = Ax^{k+1}-b = \beta^{-1}(\lambda^{k+1}-\lambda^k).$
Moreover, it is easy to show $\partial_{\hat{z}} {\cal P}_{\mathrm{meal}}(y^{k+1}) = 6\alpha  (z^{k}-z^{k+1}).$
Thus, let
\[
v^{k+1}:=
\left(
\begin{array}{c}
\gamma^{-1}(z^k - z^{k+1}) + A^T(\lambda^{k+1}-\lambda^{k})\\
\left(6\alpha  - \frac{1-\eta}{\eta\gamma}\right)(z^{k+1}-z^k)\\
\beta^{-1}(\lambda^{k+1}-\lambda^k)\\
6\alpha  (z^{k}-z^{k+1})
\end{array}
\right),
\]
which obeys $v^{k+1} \in \partial {\cal P}_{\mathrm{meal}}(y^{k+1})$ and
\begin{align*}
\|v^{k+1}\|
&\leq \left(\gamma^{-1}+\left|6\alpha - \frac{1-\eta}{\eta\gamma} \right| + 6\alpha \right)\|z^{k+1}-z^k\| + \beta^{-1}\|\lambda^{k+1}-\lambda^k\| + \|A^T(\lambda^{k+1}-\lambda^k)\|\\
&\leq \left(\gamma^{-1}+\left|6\alpha - \frac{1-\eta}{\eta\gamma} \right| + 6\alpha \right)\|z^{k+1}-z^k\| + \beta^{-1}\|\lambda^{k+1}-\lambda^k\| \\
&+(L_f+\gamma^{-1})\|x^{k+1}-x^k\| + \gamma^{-1}\|\hat{z}^{k+1}-\hat{z}^k\|,
\end{align*}
where the second inequality is due to \eqref{Eq:A-lambda-MEAL}. This yields $(P2)$ for MEAL.

\textbf{(c) Establishing $(P3)$}: $(P3)$ follows from the boundedness assumption of $\{y^k\}$, and the convergence of $\{{\cal P}_{\mathrm{meal}}(y^k)\}$ is implied by $(P1)$.
This finishes the proof.
\end{proof}

\subsection{Proof for Convergence of iMEAL}
In this subsection, we present the proof of Theorem \ref{Theorem:Convergence-iMEAL} for iMEAL \eqref{alg:iMEAL}.
\begin{proof}[of Theorem \ref{Theorem:Convergence-iMEAL}]
We first show the $o(1/\sqrt{k})$ rate of convergence under Assumption \ref{Assump:MEAL}(b) and then the convergence rate result under Assumption \ref{Assump:MEAL}(c).

\textbf{(a)}
In this case, we use a fixed $\beta_k = \beta$ and thus $\alpha_k = \alpha$.
Substituting the inequality in Lemma \ref{Lemma:dual-control-primal-iMEAL}(a) into \eqref{Eq:1-step-progress-iMEAL} in Lemma \ref{Lemma:1-step-progress-iMEAL} yields
\begin{align}
\label{Eq:primal-descent-iMEAL-S1}
&{\cal P}_{\beta}(x^{k},z^k,\lambda^k) - {\cal P}_{\beta}(x^{k+1},z^{k+1},\lambda^{k+1})
\geq \frac{1}{2}\gamma \eta(1-\eta/2)\|\nabla \phi_{\beta}(z^k,\lambda^k)\|^2\\
&+\left(\frac{1-\gamma\rho}{2\gamma} - 3\alpha (1+\gamma L_f)^2  \right)\|x^{k+1}-x^k\|^2 +\langle s^k, x^k - x^{k+1} \rangle  - 3\alpha \gamma^2 (\epsilon_k+\epsilon_{k-1})^2 \nonumber\\
&+ \frac{1}{4\gamma}(\frac{2}{\eta}-1)\|z^{k+1}-z^k\|^2 - 3\alpha \|z^k-z^{k-1}\|^2. \nonumber
\end{align}
Let $\delta := 2\left(\frac{(1-\gamma\rho)}{2\gamma} - 3\alpha (1+\gamma L_f)^2  \right)$.
By the assumption $0<\alpha < \min\big\{\frac{1-\gamma \rho}{6\gamma(1+\gamma L_f)^2}$,   $\frac{1}{12\gamma}(\frac{2}{\eta}-1)\Big\}$, we have $\delta>0$ and further
\begin{align*}
\langle s^k, x^k - x^{k+1} \rangle \geq -\frac{\delta}{2}\|x^{k+1}-x^k\|^2 - \frac{1}{2\delta} \|s^k\|^2 \geq -\frac{\delta}{2}\|x^{k+1}-x^k\|^2 - \frac{1}{2\delta} (\epsilon_k + \epsilon_{k-1})^2.
\end{align*}
Substituting this into \eqref{Eq:primal-descent-iMEAL-S1} and
noting the definition \eqref{Eq:Lyapunov-seq-iMEAL-S1} of ${\cal E}_{\mathrm{imeal}}^k$,
we have
\begin{align*}
{\cal E}_{\mathrm{imeal}}^k - {\cal E}_{\mathrm{imeal}}^{k+1} \geq \frac{1}{2}\gamma \eta(1-\eta/2)\|\nabla \phi_{\beta}(z^k,\lambda^k)\|^2 - (3\alpha \gamma^2 + \frac{1}{2\delta})(\epsilon_k + \epsilon_{k-1})^2,
\end{align*}
which yields claim (a) by the assumption $\sum_{k=1}^{\infty} (\epsilon_k)^2 <+\infty$ and Lemma \ref{Lemma:sequence-fixed}.

\textbf{(b)} Then we establish claim (b) under Assumption \ref{Assump:MEAL}(c).
Substituting the inequality in Lemma \ref{Lemma:dual-control-primal-iMEAL}(b) into \eqref{Eq:1-step-progress-iMEAL} in Lemma \ref{Lemma:1-step-progress-iMEAL} yields
\begin{align}
\label{Eq:primal-descent-iMEAL-S2}
&{\cal P}_{\beta_k}(x^{k},z^k,\lambda^k) - {\cal P}_{\beta_{k+1}}(x^{k+1},z^{k+1},\lambda^{k+1})
\geq \frac{1}{2}\gamma \eta(1-\eta/2)\|\nabla \phi_{\beta_k}(z^k,\lambda^k)\|^2 \nonumber\\
&+\left(\frac{(1-\gamma\rho)}{2\gamma} - 4\alpha_k  \right)\|x^{k+1}-x^k\|^2 +\langle s^k, x^k - x^{k+1} \rangle  - 4\gamma^2 \alpha_k (\epsilon_k+\epsilon_{k-1})^2 \nonumber\\
&+ \frac{1}{4\gamma}(\frac{2}{\eta}-1)\|z^{k+1}-z^k\|^2 - 4\alpha_k \|z^k-z^{k-1}\|^2 - 16 \alpha_k \gamma^2 \hat{L}_f^2.
\end{align}
Let $\hat{\alpha}^*:= \min\left\{ \frac{1-\rho \gamma}{8\gamma}, \frac{1}{16\gamma}(\frac{2}{\eta}-1)\right\}$ and  $\tilde{\delta}:= 2\left(\frac{(1-\gamma\rho)}{2\gamma} - 4\hat{\alpha}^*  \right)>0$. We have
\begin{align*}
\langle s^k, x^k - x^{k+1} \rangle \geq -\frac{\tilde{\delta}}{2}\|x^{k+1}-x^k\|^2 - \frac{1}{2\tilde{\delta}} \|s^k\|^2 \geq -\frac{\tilde{\delta}}{2}\|x^{k+1}-x^k\|^2 - \frac{1}{2\tilde{\delta}} (\epsilon_k + \epsilon_{k-1})^2.
\end{align*}
Substituting this into \eqref{Eq:primal-descent-iMEAL-S2}, and by the definition \eqref{Eq:Lyapunov-seq-iMEAL-S2} of $\tilde{\cal E}_{\mathrm{imeal}}^k$
and setting of $\alpha_k$, we have
\begin{align*}
&\tilde{\cal E}_{\mathrm{imeal}}^k - \tilde{\cal E}_{\mathrm{imeal}}^{k+1} \\
&\geq \frac{1}{2}\gamma (1-\eta/2) \eta\|\nabla \phi_{\beta_k}(z^k,\lambda^k)\|^2 - (4\alpha_k \gamma^2 + \frac{1}{2\tilde{\delta}})(\epsilon_k + \epsilon_{k-1})^2 - 16 \alpha_k \gamma^2 \hat{L}_f^2,
\end{align*}
which yields claim (b) by the assumption $\sum_{k=1}^{\infty} (\epsilon_k)^2 <+\infty$ and Lemma \ref{Lemma:sequence-varying}.
\end{proof}

\subsection{Proofs for Convergence of LiMEAL}

Now, we show proofs of main convergence theorems for LiMEAL \eqref{alg:LiMEAL}.

\subsubsection{Proof of Theorem \ref{Theorem:Convergence-LiMEAL}}

\begin{proof}
We first establish claim (a) and then claim (b) under the associated assumptions.

\textbf{(a)} In this case, a fixed $\beta_k$ is used. Substituting \eqref{Eq:lambda-LiMEAL} into \eqref{Eq:1-step-progress-LiMEAL} yields
\begin{align*}
&{\cal P}_{\beta}(x^{k},z^k,\lambda^k) - {\cal P}_{\beta}(x^{k+1},z^{k+1},\lambda^{k+1})
\geq \frac{1}{4}\gamma (1-\eta/2)\eta \|g_{\mathrm{limeal}}^k\|^2\\
&+ \left(\frac{1-\gamma(\rho_g+L_h)}{2\gamma} - \frac{1}{4}\gamma (2-\eta)\eta L_h^2 - 3(1+\gamma L_g)^2 \alpha\right)\|x^{k+1}-x^k\|^2 \\
&+ \frac{1}{4\gamma}(\frac{2}{\eta}-1)\|z^{k+1}-z^k\|^2 - 3\alpha (\gamma^2L_h^2 \|x^k-x^{k-1}\|^2 + \|z^k - z^{k-1}\|^2).
\end{align*}
By the definition \eqref{Eq:Lyapunov-seq-LiMEAL-S1} of ${\cal E}^k_{\mathrm{limeal}}$, the above inequality implies
\begin{align}
\label{Eq:descent-LiMEAL-S1*}
&{\cal E}_{\mathrm{limeal}}^k - {\cal E}_{\mathrm{limeal}}^{k+1}
\geq \frac{1}{4}\gamma (1-\eta/2)\eta \|g_{\mathrm{limeal}}^k\|^2 + \left(\frac{1}{4\gamma}(\frac{2}{\eta}-1) -3\alpha \right)\|z^{k+1}-z^k\|^2\\
&+ \left(\frac{1-\gamma(\rho_g+L_h)}{2\gamma} - \frac{1}{4}\gamma (2-\eta)\eta L_h^2 - 3\alpha \left((1+\gamma L_g)^2 + \gamma^2L_h^2 \right)\right)\|x^{k+1}-x^k\|^2 \nonumber\\
& \geq  \frac{1}{4}\gamma (1-\eta/2)\eta \|g_{\mathrm{limeal}}^k\|^2, \nonumber
\end{align}
where the second inequality holds under the conditions in Theorem \ref{Theorem:Convergence-LiMEAL}(a).
This shows the claim (a) by Lemma \ref{Lemma:sequence-fixed} and the lower boundedness of $\{{\cal E}_{\mathrm{limeal}}^k\}$.

\textbf{(b)} Similarly,
substituting \eqref{Eq:lambda-LiMEAL-S2} into \eqref{Eq:1-step-progress-LiMEAL} and using the definitions of ${\alpha}_k$ in \eqref{Eq:alphak} and $\tilde{\cal E}^k_{\mathrm{limeal}}$ in \eqref{Eq:Lyapunov-seq-LiMEAL-S2}, we obtain
\begin{align*}
%\label{Eq:descent-MEAL-S2}
&\tilde{\cal E}_{\mathrm{limeal}}^k - \tilde{\cal E}_{\mathrm{limeal}}^{k+1}\\
&\geq \frac{1}{4}\gamma (1-\eta/2)\eta_k \|g_{\mathrm{limeal}}^k\|^2 - 16{\alpha}_k \gamma^2 \hat{L}_g^2 + \left(\frac{1}{4\gamma}(\frac{2}{\eta}-1) -4{\alpha}_{k+1}\right)\|z^{k+1}-z^k\|^2\\
&+\left(\frac{(1-\gamma(\rho_g+L_h))}{2\gamma} - \frac{1}{4}\gamma (2-\eta)\eta L_h^2- 4 \alpha_k - 4\gamma^2L_h^2 \alpha_{k+1} \right) \|x^{k+1}-x^k\|^2 \nonumber\\
&\geq \frac{1}{4}c\gamma \eta \|g_{\mathrm{limeal}}^k\|^2 - 16{\alpha}_k \gamma^2 \hat{L}_g^2,
\end{align*}
where the second inequality is due to the settings of parameters presented in Theorem \ref{Theorem:Convergence-LiMEAL}(b).
This inequality shows claim (b) by Lemma \ref{Lemma:sequence-varying} and the lower boundedness of $\{\tilde{\cal E}_{\mathrm{limeal}}^k\}$.
\end{proof}

\subsubsection{Proof of Proposition \ref{Proposition:globalconv-LiMEAL}}

\begin{proof}
By Lemma \ref{Lemma:existing-global-converg}, we only need to verify conditions $(P1)$-$(P3)$ hold for LiMEAL.

\textbf{(a) Establishing $(P1)$}: Similar to the proof of Theorem \ref{Theorem:Convergence-MEAL}, let $a:= \frac{\gamma \eta(2-\eta)}{4\beta}$. Then $\frac{1+a}{\beta c_{\gamma,A}}=\alpha$, where $\alpha$ is defined in \eqref{Eq:alpha}.
Substituting \eqref{Eq:lambda-LiMEAL} into \eqref{Eq:primal-descent-LiMEAL} with fixed $\beta_k$  yields
\begin{align*}
&{\cal P}_{\beta}(x^k,z^k,\lambda^k) - {\cal P}_{\beta}(x^{k+1},z^{k+1},\lambda^{k+1})\\
&\geq \left(\frac{1-\gamma(\rho_g+L_h)}{2\gamma} - 3\alpha(1+\gamma L_g)^2 \right)\|x^{k+1}-x^k\|^2 - 3\alpha \gamma^2L_h^2 \|x^k-x^{k-1}\|^2 \\
&+\frac{1}{2\gamma}(\frac{2}{\eta}-1)\|z^{k+1}-z^k\|^2 - 3\alpha \|z^k-z^{k-1}\|^2 + a\beta^{-1}\|\lambda^{k+1}-\lambda^k\|^2.
\end{align*}
By the definition \eqref{Eq:Lyapunov-fun-LiMEAL} of ${\cal P}_{\mathrm{limeal}}$, the above inequality implies
\begin{align*}
%\label{Eq:sufficient-descent-LiMEAL}
&{\cal P}_{\mathrm{limeal}}(y^k) - {\cal P}_{\mathrm{limeal}}(y^{k+1}) \\
&\geq \left(\frac{1-\gamma(\rho_g+L_h)}{2\gamma} - 4\alpha \left((1+\gamma L_g)^2 + \gamma^2 L_h^2 \right) \right) \|x^{k+1}-x^k\|^2\\
&+\left(\frac{1}{2\gamma}\left(\frac{2}{\eta} -1\right) - 4\alpha \right) \|z^{k+1}-z^k\|^2  + a\beta^{-1}\|\lambda^{k+1}-\lambda^k\|^2\\
&+\alpha \left(\gamma^2L_h^2 \|\hat{x}^{k+1}-\hat{x}^k\|^2 + \|\hat{z}^{k+1} - \hat{z}^k\|^2 \right),
\end{align*}
which, with the assumptions on the parameters, implies $(P1)$ for LiMEAL.

\textbf{(b) Establishing $(P2)$}:
Note that ${\cal P}_{\mathrm{limeal}}(y) = f(x)+\langle \lambda,Ax-b\rangle + \frac{\beta}{2}\|Ax-b\|^2+\frac{1}{2\gamma}\|x-z\|^2 + 4\alpha\gamma^2L_h^2 \|x-\hat{x}\|^2 + 4\alpha \|z - \hat{z}\|^2$.
The update of $x^{k+1}$ in \eqref{alg:LiMEAL} has the optimality condition
\begin{align*}
0\in \partial g(x^{k+1}) +\nabla h(x^k) + A^T\lambda^{k+1} + \gamma^{-1}(x^{k+1}-z^k),
\end{align*}
which implies
\begin{align*}
&(\nabla h(x^{k+1})-\nabla h(x^k)) + 8\gamma^2 L_h^2 \alpha (x^{k+1}-x^k)\\
&+\gamma^{-1}(z^k - z^{k+1}) + A^T(\lambda^{k+1}-\lambda^{k}) \in \partial_x {\cal P}_{\mathrm{limeal}}(y^{k+1}).
\end{align*}
The derivations for the other terms are straightforward and similar to those in the proof of Proposition \ref{Proposition:globalconv-MEAL}.
We directly show the final estimate: for some $v^{k+1} \in \partial {\cal P}_{\mathrm{limeal}}(y^{k+1})$,
\begin{align*}
\|v^{k+1}\|
&\leq \left(L_h+L_g+\gamma^{-1} + 16\alpha\gamma^2L_h^2\right)\|x^{k+1}-x^k\| \\
&+ \left(\gamma^{-1} + \left| 8\alpha - \frac{1-\eta}{\eta} \right| + 8\alpha \right) \|z^{k+1}-z^k\|\\
&+ \beta^{-1} \|\lambda^{k+1}-\lambda^k\| + L_h \|\hat{x}^{k+1} - \hat{x}^k\| + \gamma^{-1}\|\hat{z}^{k+1}-\hat{z}^k\|,
\end{align*}
which yields $(P2)$ for LiMEAL.

\textbf{(c) Establishing $(P3)$}: $(P3)$ follows from the boundedness assumption of $\{y^k\}$ and the convergence of $\{{\cal P}_{\mathrm{limeal}}(y^k)\}$ by $(P1)$.
This finishes the proof.
\end{proof}

%\section{Generalization and boundedness}
\section{Discussions on Boundedness and Related Work}
\label{sc:discussion}

In this section, we discuss how to ensure the bounded sequences and then compare our results to related other work. % provide some discussions on related work. %, followed by some detailed comparisons with the related work.

\subsection{Discussions on Boundedness of Sequence}
\label{sc:discussion-boundedness}

Theorem \ref{Theorem:Convergence-MEAL} imposes the condition of lower boundedness of $\{{\cal E}_{\mathrm{meal}}^k\}$ and Proposition \ref{Proposition:globalconv-MEAL} does with boundedness of the generated sequence $\{(x^k,z^k,\lambda^k)\}$.
In this section, we provide some sufficient conditions to guarantee the former and then the latter boundedness conditions.

Besides the $\rho$-weak convexity of $f$ (implying the curvature of $f$ is lower bounded by $\rho$), we impose the coerciveness on the constrained problem \eqref{Eq:problem} as follows.
%Then, we impose the coerciveness on the constrained problem \eqref{Eq:problem} as follows.

\begin{assumption}[Coercivity]
\label{Assumption:coercive}
The minimal value $f^*:= \inf_{x\in {\cal X}} f(x) $ is finite (recall ${\cal X}:=\{x: Ax=b\}$), and $f$ is coercive over the set ${\cal X}$,  that is, $f(x) \rightarrow \infty$ if $x\in {\cal X}$ and $\|x\| \rightarrow \infty$.
\end{assumption}

%\begin{assumption}[Coercivity]
%\label{Assumption:coercive}
%$f$ is coercive over the set ${\cal X}:=\left\{x: Ax=b \right\}$,  that is, $f(x) \rightarrow \infty$ if $x\in {\cal X}$ and $\|x\| \rightarrow \infty$.
%\end{assumption}

The coercive assumption is a common condition used to obtain the boundedness of the sequence, for example, used in~\cite[Assumption A1]{Wang19} for the nonconvex ADMM.
%Let $f^*:= \inf_{x\in {\cal X}} f(x) $ be the minimal value, then it is finite by the weakly convex and coercive assumptions of $f$.
Particularly, let $(x^0,z^0,\lambda^0)$ be a finite initial guess of MEAL and
\begin{align}
\label{Eq:def-E0}
{\cal E}^0:={\cal E}_{\mathrm{meal}}^1 < +\infty.
\end{align}
By  Assumption \ref{Assumption:coercive}, if $x\in {\cal X}$ and $f(x) \leq {\cal E}^0$, then there exists a positive constant ${\cal B}_0$ (possibly depending on ${\cal E}^0$) such that
$
\|x\| \leq {\cal B}_0.
$
Define another positive constant as
\begin{align}
\label{Eq:boundedness-B1}
{\cal B}_1 := {\cal B}_0 + \sqrt{2\rho^{-1} \cdot \max\{0,{\cal E}^0 - f^*\}}.
\end{align}
Given a $\gamma \in (0, 1/\rho)$ and $z\in \mathbb{R}^n$ with $\|z\|\leq {\cal B}_1$ and $u\in \mathrm{Im}(A)$, we define
\begin{align}
\label{Eq:def-x-u-z}
x(u;z):= \argmin_{\{x:Ax=u\}} \left\{f(x)+\frac{1}{2\gamma}\|x-z\|^2\right\}.
\end{align}
Since $f$ is $\rho$-weakly convex by Assumption \ref{Assump:MEAL}(a), then for any $\gamma \in (0, 1/\rho)$, the function $f(x)+\frac{1}{2\gamma}\|x-z\|^2$ is strongly convex with respect to $x$, and thus the above $x(u;z)$ is well-defined and unique for any given $z \in \mathbb{R}^n$ and $u\in \mathrm{Im}(A)$.
%Motivated by \cite[Assumption A3]{Wang19}, we impose some \textit{Lipschitz continuous} regularity on $x(u;z)$ defined in \eqref{Eq:def-x-u-z}.
Motivated by \cite[Ch 5.6.3]{Boyd04}, we impose some \textit{local stability} on $x(u;z)$ defined in \eqref{Eq:def-x-u-z}.

\begin{assumption}[Local stability]
\label{Assumption:Lipschitz-min-path}
For any given $z\in \mathbb{R}^n$ with $\|z\|\leq {\cal B}_1$, there exist a $\delta>0$ and a finite positive constant $\bar{M}$ (possibly depending on $A$, ${\cal B}_1$ and $\delta$) such that
\[
\|x(u;z)-x(b;z)\| \leq \bar{M} \|u-b\|, \ \forall u \in  \mathrm{Im}(A) \cap \{v: \|v-b\| \leq \delta\}.
\]
\end{assumption}

%\begin{assumption}[Lipschitz sub-minimization path]
%\label{Assumption:Lipschitz-min-path}
%For any given $z\in \mathbb{R}^n$ with $\|z\|\leq {\cal B}_1$, there exists a finite positive constant $\bar{M}$ (possibly depending on $A$ and ${\cal B}_1$) such that
%\[
%\|x(u;z)-x(v;z)\| \leq \bar{M} \|u-v\|, \ \forall u, v\in \mathrm{Im}(A).
%\]
%\end{assumption}

The above \textit{local stability} assumption is also related to the \textit{Lipschitz sub-minimization path} assumption suggested in \cite[Assumption A3]{Wang19}.
As discussed in \cite{Wang19}, the \textit{Lipschitz sub-minimization path} assumption relaxes the more stringent \textit{full-rank} assumption used in the literature (see the discussions in \cite[Sections 2.2 and 4.1]{Wang19} and references therein).
As $\{z\in \mathbb{R}^n: \|z\|\leq {\cal B}_1\}$ is a compact set, $\bar{M}$ can be taken as the supremum of these %Lipschit
stability constants over this compact set.
Based on Assumption \ref{Assumption:Lipschitz-min-path}, we have the following lemma.

%\begin{lemma}
%\label{Lemma:A-control}
%Let $\{(x^k,z^k,\lambda^k)\}$ be the sequence generated by MEAL \eqref{alg:MEAL-reformulation} with fixed $\beta>0$ and $\eta>0$. If $\gamma \in (0,1/\rho)$ and $\|z^k\|\leq {\cal B}_1$, there holds
%\[
%\|x^{k+1}-x(b;z^k)\| \leq  \bar{M}\|Ax^{k+1}-b\|, \ \forall k \in \mathbb{N}.
%\]
%\end{lemma}

\begin{lemma}
\label{Lemma:A-control}
Let $\{(x^k,z^k,\lambda^k)\}$ be the sequence generated by MEAL \eqref{alg:MEAL-reformulation} with fixed $\beta>0$ and $\eta>0$. If $\gamma \in (0,1/\rho)$, $\|z^k\|\leq {\cal B}_1$ and $\|Ax^{k+1}-b\|\leq \delta$, there holds
\[
\|x^{k+1}-x(b;z^k)\| \leq  \bar{M}\|Ax^{k+1}-b\|, \ \forall k \in \mathbb{N}.
\]
\end{lemma}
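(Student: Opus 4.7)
The plan is to exploit a simple but crucial identity: the MEAL iterate $x^{k+1}$ coincides with the constrained proximal minimizer $x(u;z^k)$ evaluated at $u = Ax^{k+1}$. Once this is established, the stated inequality will follow immediately from Assumption \ref{Assumption:Lipschitz-min-path} applied with $u := Ax^{k+1}$, since the hypotheses $\|z^k\| \le {\cal B}_1$ and $\|Ax^{k+1} - b\| \le \delta$ are exactly what the local stability assumption requires.

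The first step is to show that
\begin{equation*}
x^{k+1} \;=\; x(Ax^{k+1};\, z^k).
\end{equation*}
To see this, observe that for any $x$ with $Ax = u$, the augmented Lagrangian plus proximal term decomposes as
\begin{equation*}
{\cal L}_{\beta_k}(x,\lambda^k) + \tfrac{1}{2\gamma}\|x-z^k\|^2
= \bigl(f(x) + \tfrac{1}{2\gamma}\|x-z^k\|^2\bigr) + \langle \lambda^k, u-b\rangle + \tfrac{\beta_k}{2}\|u-b\|^2,
\end{equation*}
where the last two terms depend only on $u$, not on $x$. Hence, restricting the MEAL subproblem to the affine slice $\{x : Ax = Ax^{k+1}\}$ reduces it to the defining minimization of $x(Ax^{k+1}; z^k)$. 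Since $x^{k+1}$ is the unconstrained minimizer (hence also optimal over this slice) and, by the $\rho$-weak convexity of $f$ and $\gamma \in (0,1/\rho)$, the slice-restricted problem has a unique minimizer, the identity follows.

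The second step is to invoke Assumption \ref{Assumption:Lipschitz-min-path} with $z = z^k$ and $u = Ax^{k+1}$. The hypotheses of the lemma guarantee $\|z^k\| \le {\cal B}_1$ and $u \in \mathrm{Im}(A) \cap \{v : \|v-b\| \le \delta\}$, so
\begin{equation*}
\|x(Ax^{k+1}; z^k) - x(b; z^k)\| \;\le\; \bar{M}\,\|Ax^{k+1} - b\|.
\end{equation*}
Combining this with the identity from the first step yields the claim.

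The only nontrivial point is the reduction in the first step, and there is essentially no obstacle beyond recording that the penalty and multiplier terms become constants on each affine slice; the uniqueness of $x(u; z)$, already discussed just before the lemma, makes the identification rigorous. No additional estimates or Lyapunov arguments are needed for this particular statement.
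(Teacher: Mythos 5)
Your proposal is correct and follows essentially the same route as the paper: both arguments establish the identity $x^{k+1} = x(Ax^{k+1};z^k)$ by observing that on the affine slice $\{x : Ax = Ax^{k+1}\}$ the multiplier and penalty terms are constant, so the MEAL subproblem restricted to that slice coincides (up to a constant) with the defining minimization of $x(\cdot\,;z^k)$, and uniqueness from strong convexity forces the identification; the paper merely phrases this as a pair of opposing inequalities that must be equalities. The final application of Assumption \ref{Assumption:Lipschitz-min-path} is identical.
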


\begin{proof}
Let $u^{k+1} = Ax^{k+1}$. By the update of $x^{k+1}$ in \eqref{alg:MEAL-reformulation}, there holds
\[
{\cal P}_{\beta}(x^{k+1},z^k,\lambda^{k}) \leq {\cal P}_{\beta}(x(u^{k+1};z^k),z^k,\lambda^{k}).
\]
Noting that $Ax(u^{k+1};z^k) = Ax^{k+1}$ due to its definition in \eqref{Eq:def-x-u-z}, the above inequality implies
\[
f(x^{k+1}) + \frac{1}{2\gamma}\|x^{k+1}-z^k\|^2 \leq f(x(u^{k+1};z^k)) + \frac{1}{2\gamma}\|x(u^{k+1};z^k)-z^k\|^2.
\]
By the definition of $x(u^{k+1};z^k)$ in \eqref{Eq:def-x-u-z} again and noting that $Ax^{k+1} = u^{k+1}$, we have
\[
f(x^{k+1}) + \frac{1}{2\gamma}\|x^{k+1}-z^k\|^2 \geq f(x(u^{k+1};z^k)) + \frac{1}{2\gamma}\|x(u^{k+1};z^k)-z^k\|^2.
\]
These two inequalities imply
\[
f(x^{k+1}) + \frac{1}{2\gamma}\|x^{k+1}-z^k\|^2 = f(x(u^{k+1};z^k)) + \frac{1}{2\gamma}\|x(u^{k+1};z^k)-z^k\|^2,
\]
which yields
\[
x^{k+1} = x(u^{k+1};z^k) = x(Ax^{k+1};z^k)
\]
by the strong convexity of function $f(x)+\frac{1}{2\gamma}\|x-z^k\|^2$ for any $\gamma \in (0,1/\rho)$ and thus the uniqueness of $x(u^{k+1};z^k)$.
Then by Assumption \ref{Assumption:Lipschitz-min-path}, we yield the desired result.
\end{proof}

Based on the above assumptions, we establish the lower boundedness of $\{{\cal E}_{\mathrm{meal}}^k\}$ and the boundedness of $\{(x^k, z^k, \lambda^k)\}$ as follows.
%\begin{proposition}
%\label{Propos:boundedness-x-z}
%Let $\{(x^k, z^k, \lambda^k)\}_{k\in \mathbb{N}}$ be a sequence generated %by MEAL \eqref{alg:MEAL-reformulation} with a finite initial guess $(x^0,z^0,\lambda^0)$ such that $\|z^0\|\leq {\cal B}_1$, where ${\cal B}_1$ is defined in \eqref{Eq:boundedness-B1}.
%Suppose that Assumptions \ref{Assump:feasibleset}, \ref{Assump:MEAL}(a)-(b), \ref{Assumption:curvature-upper} and \ref{Assumption:coercive} hold and further Assumption \ref{Assumption:Lipschitz-min-path} holds with some $0<\bar{M}<\frac{2}{\sqrt{\sigma_{\min}(A^TA)}}$. If $\gamma \in ( 0,\rho^{-1})$, $\eta \in (0,2)$ and
%$\beta > \max\left\{\frac{1+\sqrt{1+\eta(2-\eta)\gamma c_{\gamma,A}\alpha_{\max}}}{2c_{\gamma,A}\alpha_{\max}}, \frac{a_2+\sqrt{a_2^2+4a_1a_3}}{2a_1} \right\},$
%where $\alpha_{\max} := \min\left\{\frac{1-\gamma \rho}{4\gamma(1+\gamma L_f)^2},  \frac{1}{8\gamma}(\frac{2}{\eta}-1)\right\}$, $c_{\gamma,A} = \gamma^2 \sigma_{\min}(A^TA)$, $a_1 = 4-\bar{M}^2\sigma_{\min}(A^TA)$, $a_2 = 4(\bar{L}+\gamma^{-1})\bar{M}^2 - \gamma \eta(2-\eta)$ and $a_3 = (1+\gamma \bar{L})\eta(2-\eta)\bar{M}^2$,
%then  the following hold:
%\begin{enumerate}
%\item[(a)] $\{{\cal E}_{\mathrm{meal}}^k\}$ is lower bounded;

%\item[(b)] $\{(x^k,z^k)\}$ is bounded; and

%\item[(c)] if further $\lambda^0 \in \mathrm{Null}(A^T)$ (the null space of $A^T$) and $\|\nabla {\cal M}_{\gamma,f}(w^1)\|$ is finite with $w^1 = z^0 - \gamma A^T\lambda^1$, then $\{\lambda^k\}$ is bounded.
%\end{enumerate}
%\end{proposition}

\begin{proposition}
\label{Propos:boundedness-x-z}
Let $\{(x^k, z^k, \lambda^k)\}_{k\in \mathbb{N}}$ be a sequence generated by MEAL \eqref{alg:MEAL-reformulation} with a finite initial guess $(x^0,z^0,\lambda^0)$ such that $\|z^0\|\leq {\cal B}_1$, where ${\cal B}_1$ is defined in \eqref{Eq:boundedness-B1}.
Suppose that Assumptions \ref{Assump:feasibleset}, \ref{Assump:MEAL}(a)-(b) and \ref{Assumption:coercive} hold and further Assumption \ref{Assumption:Lipschitz-min-path} holds with some $0<\bar{M}<\frac{2}{\sqrt{\sigma_{\min}(A^TA)}}$. If $\gamma \in ( 0,\rho^{-1})$, $\eta \in (0,2)$ and
$\beta > \max\left\{\frac{1+\sqrt{1+\eta(2-\eta)\gamma c_{\gamma,A}\alpha_{\max}}}{2c_{\gamma,A}\alpha_{\max}}, \frac{a_2+\sqrt{a_2^2+4a_1a_3}}{2a_1} \right\},$
where $\alpha_{\max} := \min\left\{\frac{1-\gamma \rho}{4\gamma(1+\gamma L_f)^2},  \frac{1}{8\gamma}(\frac{2}{\eta}-1)\right\}$, $c_{\gamma,A} = \gamma^2 \sigma_{\min}(A^TA)$, $a_1 = 4-\bar{M}^2\sigma_{\min}(A^TA)$, $a_2 = 4(\bar{L}+\gamma^{-1})\bar{M}^2 - \gamma \eta(2-\eta)$, $a_3 = (1+\gamma \bar{L})\eta(2-\eta)\bar{M}^2$ and $\bar{L}=\rho+2L_f$,
then  the following hold:
\begin{enumerate}
\item[(a)] $\{{\cal E}_{\mathrm{meal}}^k\}$ is lower bounded;

\item[(b)] $\{(x^k,z^k)\}$ is bounded; and

\item[(c)] if further $\lambda^0 \in \mathrm{Null}(A^T)$ (the null space of $A^T$) and $\|\nabla {\cal M}_{\gamma,f}(w^1)\|$ is finite with $w^1 = z^0 - \gamma A^T\lambda^1$, then $\{\lambda^k\}$ is bounded.
\end{enumerate}
\end{proposition}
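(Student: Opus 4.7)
The plan is to prove (a) and (b) by a joint induction on $k$ that maintains three properties: (H1) $\|z^k\| \le \mathcal{B}_1$; (H2) the monotone decrease $\mathcal{E}_{\mathrm{meal}}^{k+1} \le \mathcal{E}_{\mathrm{meal}}^k \le \mathcal{E}^0$; and (H3) the feasibility residual $\|A x^{k+1}-b\| \le \delta$, which keeps us inside the local-stability radius of Assumption \ref{Assumption:Lipschitz-min-path}. The base case follows from the standing assumption on the initial guess. Once (a) and (b) are in hand, part (c) is a short extra computation under the added hypothesis on $\lambda^0$.

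The monotone-decrease part of (H2) is essentially free: the first lower bound on $\beta$ is exactly the condition $\alpha < \alpha_{\max}$ in Theorem \ref{Theorem:Convergence-MEAL}(a), so the descent inequality \eqref{Eq:descent-MEAL-S1*} yields $\mathcal{E}^{k+1}_{\mathrm{meal}} \le \mathcal{E}^k_{\mathrm{meal}}$. For the lower bound on $\mathcal{E}_{\mathrm{meal}}^k$ (part (a)), I would attack $\mathcal{P}_\beta(x^k,z^k,\lambda^k)$ as follows. Since $b\in\mathrm{Im}(A)$ by feasibility, the cross term $\langle\lambda^k,A x^k-b\rangle$ depends only on the $\mathrm{Im}(A)$-component $\lambda_1^k$ of $\lambda^k$, and $\|\lambda_1^k\|\le\tilde\sigma_{\min}(A^TA)^{-1/2}\|A^T\lambda^k\|$. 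The identity $A^T\lambda^k=-\nabla\mathcal{M}_{\gamma,f}(w^k)-\gamma^{-1}(x^k-z^{k-1})$ derived in the proof of Lemma \ref{Lemma:dual-control-primal-MEAL}, combined with the implicit Lipschitz subgradient property, bounds $\|A^T\lambda^k\|$ in terms of $\|x^k\|$ and $\|z^{k-1}\|$. A Young-type step absorbs $\langle \lambda^k,A x^k-b\rangle$ into $\tfrac{\beta}{2}\|A x^k-b\|^2$; Lemma \ref{Lemma:A-control} then lets us write $x^k=x(b;z^{k-1})+r^k$ with $\|r^k\|\le\bar M\|A x^k-b\|\le\bar M\delta$, coercivity on $\mathcal{X}$ lower-bounds $f(x(b;z^{k-1}))$, and the $\rho$-weak-convexity inequality compares $f(x^k)$ with $f(x(b;z^{k-1}))$. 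Collecting the pieces yields a uniform lower bound on $\mathcal{P}_\beta(x^k,z^k,\lambda^k)$ depending only on $\mathcal{B}_1$, $\delta$, and problem constants.

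For (b), once (H2) gives $\mathcal{P}_\beta(x^k,z^k,\lambda^k)\le\mathcal{E}^0$, I would rerun the same chain of estimates in reverse: the upper bound on $\mathcal{P}_\beta$ yields an upper bound on $f(x^k)$ plus a quadratic in $\|x^k-x(b;z^{k-1})\|$; coercivity bounds $\|x(b;z^{k-1})\|$ uniformly (since $\|z^{k-1}\|\le\mathcal{B}_1$ by (H1)); Lemma \ref{Lemma:A-control} together with (H3) bounds $\|x^k-x(b;z^{k-1})\|$. This yields $\|x^k\|\le\mathcal{B}_0+\sqrt{2\rho^{-1}(\mathcal{E}^0-f^*)}=\mathcal{B}_1$ exactly when $\mathcal{B}_1$ is chosen as in \eqref{Eq:boundedness-B1}, and the convex combination $z^{k+1}=z^k+\eta(x^{k+1}-z^k)$ preserves this for $\eta\in(0,2)$, closing (H1). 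Part (c) follows because $\lambda^0\in\mathrm{Null}(A^T)$ and $\lambda^{k+1}-\lambda^k=\beta(A x^{k+1}-b)\in\mathrm{Im}(A)$ together imply $\lambda^k-\lambda^0\in\mathrm{Im}(A)$, so boundedness of $\lambda^k$ reduces to boundedness of $A^T\lambda^k$; the identity above plus the telescoping estimate $\|\nabla\mathcal{M}_{\gamma,f}(w^k)\|\le\|\nabla\mathcal{M}_{\gamma,f}(w^1)\|+L_f\|x^k-x^1\|$ (from implicit Lipschitz at the pair $(w^k,w^1)$ with proximal images $(x^k,x^1)$) and the boundedness of $\{x^k,z^k\}$ finish the argument.

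The main obstacle is closing the feasibility induction (H3). Lemma \ref{Lemma:dual-control-primal-MEAL}(a) combined with $\lambda^{k+1}-\lambda^k\in\mathrm{Im}(A)$ yields $\beta\sqrt{\tilde\sigma_{\min}(A^TA)}\|A x^{k+1}-b\|\le (L_f+\gamma^{-1})\|x^{k+1}-x^k\|+\gamma^{-1}\|z^k-z^{k-1}\|$, and the descent inequality plus Lemma \ref{Lemma:A-control} control the right-hand side in terms of $\|A x^{k+1}-b\|$ itself. Propagating $\|A x^{k+1}-b\|\le\delta$ through the induction requires a quadratic inequality $a_1\beta^2-a_2\beta-a_3>0$, whose leading coefficient $a_1=4-\bar M^2\sigma_{\min}(A^TA)$ is positive precisely because of the hypothesis $\bar M<2/\sqrt{\sigma_{\min}(A^TA)}$; the larger root of this quadratic is exactly the second lower bound on $\beta$ appearing in the statement. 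The delicate bookkeeping to pin down $a_1,a_2,a_3$ in the explicit form $a_2=4(\bar L+\gamma^{-1})\bar M^2-\gamma\eta(2-\eta)$, $a_3=(1+\gamma\bar L)\eta(2-\eta)\bar M^2$ with $\bar L=\rho+2L_f$ is where most of the work lies.
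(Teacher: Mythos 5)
Your overall architecture matches the paper's: you decompose around $\bar{x}^k := x(b;z^{k-1})$, use the identity $A^T\lambda^k = -\nabla{\cal M}_{\gamma,f}(w^k) - \gamma^{-1}(x^k - z^{k-1})$, compare $f(x^k)$ with $f(\bar{x}^k)$ via weak convexity plus the implicit Lipschitz subgradient property (which is where $\bar{L}=\rho+2L_f$ enters), invoke coercivity and Lemma \ref{Lemma:A-control}, and prove (c) exactly as the paper does. But there are two genuine gaps. First, you misattribute the quadratic condition $a_1\beta^2-a_2\beta-a_3>0$ to the propagation of the feasibility residual $\|Ax^{k+1}-b\|\le\delta$ through Lemma \ref{Lemma:dual-control-primal-MEAL}(a). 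In the paper that condition arises in the lower bound for ${\cal E}_{\mathrm{meal}}^k$ itself: after completing the square, the bound contains the term $\frac{1}{2}\left[\beta-\left(\frac{1}{4\alpha\gamma^2}+\bar{L}+\gamma^{-1}\right)\bar{M}^2\right]\|Ax^k-b\|^2$ (see \eqref{Eq:key-ineq4}), and requiring this bracket to be nonnegative, with $\alpha$ expressed through $\beta$ via \eqref{Eq:alpha}, is precisely $a_1\beta^2-a_2\beta-a_3\ge 0$; that is the step you describe only loosely as ``a Young-type step absorbs $\langle\lambda^k,Ax^k-b\rangle$.'' Your proposed route through (H3) would not produce constants involving $\bar{L}$ and $\frac{1}{4\alpha\gamma^2}$, and it does not obviously close: bounding $\|x^{k+1}-x^k\|$ via Lemma \ref{Lemma:A-control} leaves a term $\|x(b;z^k)-x(b;z^{k-1})\|$ that Assumption \ref{Assumption:Lipschitz-min-path} (stability in $u$ for fixed $z$) does not control.

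Second, your closing of (H1) fails. You claim the estimates give $\|x^k\|\le{\cal B}_1$ and that $z^{k+1}=z^k+\eta(x^{k+1}-z^k)$ ``preserves'' the bound; but for $\eta\in(1,2)$ this is not a convex combination, and in any case the bound actually obtainable on $\|x^k\|$ is ${\cal B}_2$ in \eqref{Eq:boundedness-B2}, which is larger than ${\cal B}_1$. The correct route, as in the paper, is to read $\|z^k\|\le{\cal B}_1$ directly off the lower bound: $f^*+\frac{1}{2\gamma}\|\bar{x}^k-z^k\|^2\le{\cal E}^0$ together with $\frac{1}{2\gamma}>\frac{\rho}{2}$ and $\|\bar{x}^k\|\le{\cal B}_0$ gives $\|z^k\|\le{\cal B}_0+\sqrt{2\rho^{-1}({\cal E}^0-f^*)}={\cal B}_1$, which is exactly why ${\cal B}_1$ is defined as in \eqref{Eq:boundedness-B1}; only afterwards does one extract the bound ${\cal B}_2$ on $\|x^k\|$ from the remaining cross term. (Relatedly, the base case of your hypothesis (H3) does not follow from $\|z^0\|\le{\cal B}_1$ alone; the paper instead starts the argument at a sufficiently large $k_0$ for which $\|Ax^k-b\|\le\delta$.)
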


\begin{proof}
In order to prove this proposition, we firstly establish the following claim for sufficiently large $k$:

\textbf{Claim A:}
\textit{If $\|z^{k-1}\|\leq {\cal B}_1, \|Ax^k-b\| \leq \delta, \forall k\geq k_0$ for some sufficiently large $k_0$, then  ${\cal E}_{\mathrm{meal}}^k \geq f^*$, and $\|z^{k}\|\leq {\cal B}_1$ and $\|x^k\| \leq {\cal B}_2$.}

By Theorem \ref{Theorem:Convergence-MEAL}(a), such $k_0$ does exist due to the lower boundedness of $\{{\cal E}_{meal}^k\}$ for all finite $k$ and thus $\xi_{\mathrm{meal}}^k \leq \hat{c}/\sqrt{k}$ for some constant $\hat{c}>0$ (implying $\|Ax^k-b\|$ is sufficiently small with a sufficiently large $k$ ).

In the next, we show \textbf{Claim A}.
By the definition \eqref{Eq:Lyapunov-seq-MEAL-S1}  of ${\cal E}_{\mathrm{meal}}^k$, we have
\begin{align*}
{\cal E}_{\mathrm{meal}}^k
&= f(x^k) + \langle \lambda^{k}, Ax^k-b \rangle + \frac{\beta}{2}\|Ax^k-b\|^2 + \frac{1}{2\gamma}\|x^k-z^k\|^2 + 2\alpha \|z^k-z^{k-1}\|^2\\
&=f(x^k) + \langle A^T\lambda^{k}, x^k-\bar{x}^k \rangle + \frac{\beta}{2}\|Ax^k-b\|^2 + \frac{1}{2\gamma}\|x^k-z^k\|^2 + 2\alpha \|z^k-z^{k-1}\|^2,
\end{align*}
where
\[\bar{x}^k:=x(b;z^{k-1})
\]
as defined in \eqref{Eq:def-x-u-z}. Let $\bar{\lambda}^k$ be the associated optimal Lagrangian multiplier of $\bar{x}^k$ and $\bar{w}^k = z^{k-1} - \gamma A^T \bar{\lambda}^k$. Then we have
\[
\bar{x}^k = \mathrm{Prox}_{\gamma,f}(\bar{w}^k),
\]
and $\nabla {\cal M}_{\gamma,f}(\bar{w}^k) \in \partial f(\bar{x}^k)$.
By \eqref{Eq:A-lambda} in the proof of Lemma \ref{Lemma:dual-control-primal-MEAL}, we have
\begin{align*}
A^T\lambda^{k} = -\nabla {\cal M}_{\gamma,f}(w^k) - \gamma^{-1}(x^k-z^{k-1}),
\end{align*}
and $\nabla {\cal M}_{\gamma,f}(w^k) \in \partial f(x^k)$, where $w^k = z^{k-1} - \gamma A^T \lambda^{k}$.
Substituting the above equation into the previous equality yields
\begin{align}
{\cal E}_{\mathrm{meal}}^k
&= f(x^k) + \langle \nabla {\cal M}_{\gamma,f}(w^k),\bar{x}^k - x^k \rangle + \frac{\beta}{2}\|Ax^k-b\|^2 \label{Eq:key-ineq1}\\
&+\gamma^{-1}\langle x^k-z^{k-1}, \bar{x}^k - x^k \rangle + \frac{1}{2\gamma}\|x^k-z^k\|^2 + 2 \alpha \|z^k - z^{k-1}\|^2. \nonumber
\end{align}
%Noting that $\nabla {\cal M}_{\gamma,f}(w^k) \in \partial f(x^k)$ and by Lemma \ref{Lemma:descent-ineq},
Noting that $\nabla {\cal M}_{\gamma,f}(\bar{w}^k) \in \partial f(\bar{x}^k)$ and by the $\rho$-weak convexity of $f$, we have
\[f(x^k) \geq f(\bar{x}^k) + \langle \nabla{\cal M}_{\gamma,f}(\bar{w}^k),x^k-\bar{x}^k\rangle - \frac{\rho}{2}\|x^k-\bar{x}^k\|^2,
\]
which implies
\begin{align*}
&f(x^k) + \langle \nabla{\cal M}_{\gamma,f}(w^k),\bar{x}^k-x^k\rangle \\
&\geq f(\bar{x}^k) - \frac{\rho}{2}\|\bar{x}^k - x^k\|^2 - \langle \nabla{\cal M}_{\gamma,f}(\bar{w}^k)-\nabla{\cal M}_{\gamma,f}(w^k),\bar{x}^k-x^k\rangle\\
&\geq f(\bar{x}^k) - \frac{\rho}{2}\|\bar{x}^k - x^k\|^2 - \|\nabla{\cal M}_{\gamma,f}(\bar{w}^k)-\nabla{\cal M}_{\gamma,f}(w^k)\|\cdot \|\bar{x}^k-x^k\|.
\end{align*}
By the implicit Lipschitz subgradient assumption (i.e., Assumption \ref{Assump:MEAL} (b)) and the definition of $\bar{L}:= \rho+2L_f$, the above inequality yields
%we have
\begin{align}
\label{Eq:key-ineq2}
f(x^k)+\langle \nabla {\cal M}_{\gamma,f}(w^k), \bar{x}^k - x^k \rangle \geq f(\bar{x}^k) - \frac{\bar{L}}{2}\|\bar{x}^k-x^k\|^2.
\end{align}
Moreover, it is easy to show that
\begin{align}
\label{Eq:key-ineq3}
&\gamma^{-1}\langle x^k - z^{k-1}, \bar{x}^k - x^k \rangle + \frac{1}{2\gamma}\|x^k-z^k\|^2 + 2 \alpha \|z^k - z^{k-1}\|^2  \\
& = \frac{1}{2\gamma}\|\bar{x}^k-z^k\|^2 - \frac{1}{2\gamma}\|\bar{x}^k-x^k\|^2 + \gamma^{-1} \langle z^k - z^{k-1}, \bar{x}^k - x^k\rangle + 2\alpha \|z^k - z^{k-1}\|^2 \nonumber\\
&=\frac{1}{2\gamma}\|\bar{x}^k-z^k\|^2 - \left(\frac{1}{2\gamma} + \frac{1}{8 \alpha \gamma^2}\right)\|\bar{x}^k-x^k\|^2
+2\alpha \left\|(z^k - z^{k-1}) + \frac{1}{4 \alpha \gamma} (\bar{x}^k - x^k)\right\|^2. \nonumber
\end{align}
Substituting \eqref{Eq:key-ineq2}-\eqref{Eq:key-ineq3} into \eqref{Eq:key-ineq1} and by Lemma \ref{Lemma:A-control}, we have
\begin{align}
{\cal E}_{\mathrm{meal}}^k
&\geq f(\bar{x}^k) + \frac{1}{2\gamma}\|\bar{x}^k - z^k\|^2
+2\alpha \left\|(z^k - z^{k-1}) + \frac{1}{4\alpha \gamma} (\bar{x}^k - x^k)\right\|^2 \nonumber\\
&+ \frac{1}{2} \left[\beta - \left(\frac{1}{4\alpha\gamma^2}+\bar{L}+\gamma^{-1} \right)\bar{M}^2\right] \|Ax^k-b\|^2 \nonumber\\
&\geq f(\bar{x}^k) + \frac{1}{2\gamma}\|\bar{x}^k - z^k\|^2
+2\alpha \left\|(z^k - z^{k-1}) + \frac{1}{4\alpha\gamma} (\bar{x}^k - x^k)\right\|^2 \label{Eq:key-ineq4}\\
&\geq f^* + \frac{1}{2\gamma}\|\bar{x}^k - z^k\|^2
+2\alpha \left\|(z^k - z^{k-1}) + \frac{1}{4 \alpha \gamma} (\bar{x}^k - x^k)\right\|^2 \label{Eq:key-ineq5}\\
&> -\infty, \label{Eq:key-ineq6}
\end{align}
where the second inequality follows from the definition of $\alpha = \frac{2\beta+\gamma \eta(1-\eta/2)}{2\gamma^2\sigma_{\min}(A^TA)\beta^2}$ and the condition on $\beta$, the third inequality holds for $\bar{x}^k:= x(b;z^{k-1})$ and thus $A\bar{x}^k = b$ and $f(\bar{x}^k) \geq f^*$, and the final inequality is due to Assumption \ref{Assumption:coercive}.
The above inequality yields the lower boundedness of $\{{\cal E}_{\mathrm{meal}}^k\}$ in \textbf{Claim A}. Thus, clam (a) in this proposition holds.

Then, we show the boundedness of $\{(x^k,z^k)\}$ in \textbf{Claim A}.
By \eqref{Eq:key-ineq4} and \eqref{Eq:descent-MEAL-S1*}, we have
\begin{align*}
f(\bar{x}^k) \leq {\cal E}^0 := {\cal E}_{\mathrm{meal}}^1,
\end{align*}
which implies $\|\bar{x}^k\| \leq {\cal B}_0$
by Assumption \ref{Assumption:coercive}. By \eqref{Eq:key-ineq5} and the condition on $\gamma \in (0, \rho^{-1})$, we have $f^* + \frac{\rho}{2}\|\bar{x}^k - z^k\|^2 \leq f^* + \frac{1}{2\gamma}\|\bar{x}^k - z^k\|^2 \leq {\cal E}^0,$
which implies
\begin{align*}
\|z^k\| \leq {\cal B}_0 + \sqrt{2({\cal E}^0-f^*)/\rho} = {\cal B}_1.
\end{align*}
By \eqref{Eq:key-ineq5} again, we have $\left\|(z^k - z^{k-1}) + \frac{1}{4 \alpha \gamma} (\bar{x}^k - x^k)\right\|^2 \leq \frac{{\cal E}^0 - f^*}{2 \alpha},$
which, together with these existing bounds $\|z^{k-1}\|\leq {\cal B}_1$, $\|z^{k}\|\leq {\cal B}_1$ and $\|\bar{x}^k\| \leq {\cal B}_0$, yields
\begin{align}
\label{Eq:boundedness-B2}
\|x^k\| \leq {\cal B}_0 + 4\alpha\gamma \left(2{\cal B}_1 + \sqrt{\frac{{\cal E}^0 - f^*}{2\alpha}} \right) =: {\cal B}_2.
\end{align}
Thus, we have shown \textbf{Claim A}. Recursively, we can show that $\{x^k\}$ and $\{z^k\}$ are respectively bounded by ${\cal B}_2$ and ${\cal B}_1$ for any $k\geq 1$, that is, claim (b) in this proposition holds.

In the following, we show claim (c) of this proposition.
By the update of $\lambda^{k+1}$ in \eqref{alg:MEAL-reformulation}, it is easy to show $\lambda^k = \lambda^0 + \hat{\lambda}^k$, where $\hat{\lambda}^k = \beta \sum_{t=1}^{k} (Ax^t-b) \in \mathrm{Im}(A)$ by Assumption \ref{Assump:feasibleset}. Furthermore, by the assumption that $\lambda^0 \in \mathrm{Null}(A^T)$, we have
\begin{align}
\label{Eq:lambda-orth}
\langle \lambda^0, \hat{\lambda}^k \rangle =0, \ \forall k \geq 1.
\end{align}
By \eqref{Eq:A-lambda}, for any $k\geq 1$, we have
\begin{align*}
A^T\lambda^{k}= -(\nabla {\cal M}_{\gamma,f}(w^{k})-\nabla {\cal M}_{\gamma,f}(w^{1}))-\nabla {\cal M}_{\gamma,f}(w^{1})-\gamma^{-1}(x^{k}-z^{k-1}),
\end{align*}
where $w^k = z^{k-1} - \gamma A^T\lambda^k$. By Assumption \ref{Assump:MEAL}(b) and the boundedness of $\{(x^k,z^k)\}$ shown before, the above equation implies
\begin{align*}
\|A^T {\lambda}^k\|
&\leq L_f \|x^k - x^1\| + \|\nabla {\cal M}_{\gamma,f}(w^{1})\| + \gamma^{-1}\|x^{k}-z^{k-1}\| \\
&\leq \gamma^{-1}{\cal B}_1 + (2L_f+\gamma^{-1}){\cal B}_2 + \|\nabla {\cal M}_{\gamma,f}(w^{1})\|  <+\infty.
\end{align*}
By the relation $\lambda^k = \lambda^0 + \hat{\lambda}^k$ and \eqref{Eq:lambda-orth}, the above inequality implies
\begin{align*}
\|A^T\hat{\lambda}^k\| \leq \gamma^{-1}{\cal B}_1 + (2L_f+\gamma^{-1}){\cal B}_2 + \|\nabla {\cal M}_{\gamma,f}(w^{1})\|.
\end{align*}
Since $\hat{\lambda}^k \in \mathrm{Im}(A)$, the above inequality implies
\begin{align*}
\|\hat{\lambda}^k\| \leq \tilde{\sigma}_{\min}^{-1/2}(A^TA) \|A^T\hat{\lambda}^k\| \leq \tilde{\sigma}_{\min}^{-1/2}(A^TA) \left[\gamma^{-1}{\cal B}_1 + (2L_f+\gamma^{-1}){\cal B}_2 + \|\nabla {\cal M}_{\gamma,f}(w^{1})\|\right],
\end{align*}
which yields the boundedness of $\{\lambda^k\}$ by the triangle inequality.
This finishes the proof.
\end{proof}

The proof idea of claim (c) of this proposition is motivated by the proof of \cite[Lemma 3.1]{Zhang-Luo18}.
Based on Proposition \ref{Propos:boundedness-x-z},
we show the lower boundedness of the Lypunov function sequence and the boundedness of the sequence generated by MEAL.
Following the similar analysis of this section, we can obtain the similar boundeness results for both iMEAL and LiMEAL.

\subsection{Discussions on Related Work}
\label{sc:related-work}

When compared to the tightly related work \cite{Hajinezhad-Hong19,Hong17-Prox-PDA,Jiang19,Rockafellar76-PALM,Xie-Wright19,Zhang-Luo20,Zhang-Luo18}, this paper provides some slightly stronger convergence results under weaker conditions. The detailed discussions and comparisons with these works are shown as follows and presented in Tables \ref{Tab:comp-alg1} and \ref{Tab:comp-alg2}.

\begin{table}
\caption{Convergence results of our and related algorithms for problem \eqref{Eq:problem}}
%\vspace{0.1cm}
\footnotesize
\begin{center}
\begin{tabular}{c|c|c|c|c}\hline
Algorithm & MEAL (our)  & iMEAL (our) & Prox-PDA \cite{Hong17-Prox-PDA} & Prox-ALM \cite{Xie-Wright19} \\\hline\hline
Assumption &\multicolumn{2}{|c|}{$f$: weakly convex,  \textit{imp-Lip} or \textit{imp-bound}}
& \multicolumn{2}{|c|}{$\nabla f$: Lipschitz}
\\\hline
{Iteration} & \textit{imp-Lip}: $o(\varepsilon^{-2})$ & \textit{imp-Lip}: $o(\varepsilon^{-2})$ & \multirow{2}*{${\cal O}(\varepsilon^{-2})$} & \multirow{2}*{${\cal O}(\varepsilon^{-2})$} \\
 complexity & \textit{imp-bound}: ${\cal O}(\varepsilon^{-2})$ & \textit{imp-bound}: ${\cal O}(\varepsilon^{-2})$ & ~ &~ \\\hline
{Global}  & \multirow{2}*{$\checkmark$ under K{\L}} & \multirow{2}*{--}  & \multirow{2}*{--} & \multirow{2}*{--} \\
Convergence & ~ & ~ & ~ &~ \\\hline
\end{tabular}
\end{center}
$\bullet$~\textit{imp-Lip}: the \textit{implicit Lipschitz subgradient} assumption \ref{Assump:MEAL}(b);\\
$\bullet$~\textit{imp-bound}: the \textit{implicit bounded subgradient} assumption \ref{Assump:MEAL}(c);\\
$\bullet$~\cite{Xie-Wright19} considers a nonlinear equality constraints $c(x)=0$ where $\nabla c$ is Lipschitz and bounded.
\label{Tab:comp-alg1}
%\vspace{-.8cm}
\end{table}

\begin{table}
\caption{Convergence results of our and related algorithms for the composite optimization problem \eqref{Eq:problem-CP}.}
%\vspace{0.1cm}
\footnotesize
\begin{center}
\begin{tabular}{c|c|c|c|c}\hline
Algorithm & LiMEAL (our)  & PProx-PDA \cite{Hajinezhad-Hong19} & Prox-iALM \cite{Zhang-Luo18} & S-prox-ALM \cite{Zhang-Luo20} \\\hline\hline
\multirow{3}*{Assumption} &$\nabla h$: Lipschitz,
& $\nabla h$: Lipschitz, & $\nabla h$: Lipschitz, & $\nabla h$: Lipschitz,
\\
~ & $g$: weakly convex, & $g$: convex, & $g:\iota_{\cal C}(x)$, & $g:\iota_{\cal P}(x)$, \\
~ & \textit{imp-Lip} or \textit{imp-bound} &$\partial g$: bounded &${\cal C}$: box constraint &${\cal P}$: polyhedral set \\\hline
{Iteration} & \textit{imp-Lip}: $o(\varepsilon^{-2})$ & \multirow{2}*{${\cal O}(\varepsilon^{-2})$} & \multirow{2}*{${\cal O}(\varepsilon^{-2})$} & \multirow{2}*{${\cal O}(\varepsilon^{-2})$} \\
 complexity & \textit{imp-bound}: ${\cal O}(\varepsilon^{-2})$ & ~ & ~ &~ \\\hline
{Global}  & \multirow{2}*{$\checkmark$ under K{\L}} & \multirow{2}*{--}  & $\checkmark$ for quadratic & \multirow{2}*{--} \\
Convergence & ~ & ~ & programming &~ \\\hline
\end{tabular}
\end{center}
\label{Tab:comp-alg2}
%\vspace{-.8cm}
\end{table}

When reduced to the case of linear constraints, the proximal ALM suggested in \cite{Rockafellar76-PALM} is a special case of MEAL with $\eta = 1$, and the Lipschitz continuity of certain fundamental mapping at the origin \cite[p. 100]{Rockafellar76-PALM} generally implies the K{\L} property of the proximal augmented Lagrangian with exponent $1/2$ at some stationary point, and thus, the linear convergence of proximal ALM can be directly yielded by Proposition \ref{Proposition:globalconv-MEAL}(b).
Moreover, the proposed algorithms still work (in terms of convergence) for some constrained problems with nonconvex objectives and a fixed penalty parameter.

In \cite{Hong17-Prox-PDA}, a proximal primal-dual algorithm (named \textit{Prox-PDA}) was proposed for the linearly constrained problem \eqref{Eq:problem} with $b=0$.
Prox-PDA is shown as follows:
\begin{align*}
\text{(Prox-PDA)} \
\left\{
\begin{array}{l}
x^{k+1} = \argmin_{x\in \mathbb{R}^n} \ \left\{ f(x)+\langle \lambda^k,Ax\rangle + \frac{\beta}{2}\|Ax\|^2 + \frac{\beta}{2}\|x-x^k\|^2_{B^TB}\right\},\\
\lambda^{k+1} = \lambda^k + \beta Ax^{k+1},
\end{array}
\right.
\end{align*}
where $B$ is chosen such that $A^TA + B^TB \succeq \mathrm{I}_n$ (the identity matrix of size $n$).
To achieve a $\sqrt{\varepsilon}$-accurate stationary point,
the iteration complexity of Prox-PDA is ${\cal O}(\varepsilon^{-1})$ under the Lipschitz differentiability of $f$ (that is, $f$ is differentiable and has Lipschitz gradient) and the assumption that there exists some $\underline{f}>-\infty$ and some $\delta>0$ such that $f(x)+\frac{\delta}{2}\|Ax\|^2 \geq \underline{f}$ for any $x\in \mathbb{R}^n$. Such iteration complexity of Prox-PDA is consistent with the order of ${\cal O}(\varepsilon^{-2})$ to achieve an $\varepsilon$-accurate stationary point.
On one hand if we take $B = \mathrm{I}_n$ in Prox-PDA, then it reduces to MEAL with $\gamma = \beta^{-1}$ and $\eta=1$.
On the other hand, by our main Theorem \ref{Theorem:Convergence-MEAL}(a), the iteration complexity of the order of $o(\varepsilon^{-2})$ is slightly better than that of Prox-PDA, under weaker conditions (see, Assumption \ref{Assump:MEAL}(a)-(b)).
Moreover, we established the global convergence and rate of MEAL under the  K{\L} inequality, while such global convergence result is missing (though obtainable) for Prox-PDA in \cite{Hong17-Prox-PDA}.

A prox-linear variant of Prox-PDA (there dubbed \textit{PProx-PDA}) was proposed in the recent paper \cite{Hajinezhad-Hong19} for the linearly constrained problem \eqref{Eq:problem-CP} with a composite objective. Besides Lipschitz differentiability of $h$, the nonsmooth function $g$ is assumed to be convex with bounded subgradients. These assumptions used in \cite{Hajinezhad-Hong19} are stronger than ours in Assumption \ref{Assump:LiMEAL}(a), (b) and (d), while the yielded iteration complexity of LiMEAL (Theorem \ref{Theorem:Convergence-LiMEAL}(b)) is consistent with that of PProx-PDA in \cite[Theorem 1]{Hajinezhad-Hong19}.
Moreover, we establish the global convergence and rate of LiMEAL (Proposition \ref{Proposition:globalconv-LiMEAL}), which is missing (though obtainable) for PProx-PDA.

In \cite{Xie-Wright19}, an ${\cal O}(\varepsilon^{-2})$-iteration complexity of proximal ALM was established for the constrained problem with nonlinear equality constraints, under assumptions that the objective is differentiable and its gradient is both Lipschitz continuous and bounded, and that the Jacobian of the constraints is also Lipschitz continuous and bounded and satisfies a \textit{full-rank} property (see \cite[Assumption 1]{Xie-Wright19}). If we reduce their setting to linear constraints, their iteration complexity is slightly worse than ours and their assumptions are stronger (of course, except for the part on nonlinear constraints).

In \cite{Zhang-Luo18}, a very related algorithm (called \textit{Proximal Inexact Augmented Lagrangian Multiplier method}, dubbed Prox-iALM) was introduced for the following linearly constrained problem
\begin{align*}
\min_{x\in \mathbb{R}^n} \ h(x) \quad \mathrm{subject \ to} \quad Ax=b, \ x\in {\cal C},
\end{align*}
where ${\cal C}$ is a box constraint set. Subsequence convergence to a stationary point was established under
the following assumptions:
(a) the origin is in the relative interior of the set $\{Ax-b: x\in {\cal C}\}$;
(b) the strict complementarity condition \cite{Nocedal99} holds for the above constrained problem;
(c) $h$ is differentiable and has Lipschitz continuous gradient.
Moreover, the global convergence and linear rate of this algorithm was established for the quadratic programming, in which case, the augmented Lagrangian satisfies the K{\L} inequality with exponent $1/2$, by noticing
%that
the connection between Luo-Tseng error bound and K{\L} inequality \cite{Li-Pong-KLexponent18}.
According to Theorem \ref{Theorem:Convergence-LiMEAL} and Proposition \ref{Proposition:globalconv-LiMEAL}, the established convergence results in this paper are more general and stronger than that in \cite{Zhang-Luo18} but under weaker assumptions. Particularly, besides the weaker assumption on $h$, the strict complementarity condition (b) is also removed in this paper for LiMEAL.

The algorithm studied in \cite{Zhang-Luo18} has been recently generalized to handle the linearly constrained problem with the polyhedral set in \cite{Zhang-Luo20} (dubbed \textit{S-prox-ALM}). Under the Lipschitz differentiability of the objective, the iteration complexity of the order ${\cal O}(\varepsilon^{-2})$ was established in \cite{Zhang-Luo20} for the S-prox-ALM algorithm. Such iteration complexity is consistent with LiMEAL as shown in Theorem \ref{Theorem:Convergence-LiMEAL}.
Besides these major differences between this paper and \cite{Zhang-Luo20,Zhang-Luo18}, the step sizes $\eta$ are more flexible for both MEAL and LiMEAL (only requiring $\eta \in (0,2)$), while the step sizes used in the algorithms in \cite{Zhang-Luo20,Zhang-Luo18} should be sufficiently small to guarantee the convergence.
Meanwhile, the Lyapunov function used in this paper is motivated by the Moreau envelope of the augmented Lagrangian, which is very different from the Lyapunov function used in \cite{Zhang-Luo20,Zhang-Luo18}.
Based on the defined Lyapunov function, our analysis is much simpler than that in \cite{Zhang-Luo20,Zhang-Luo18}.

\section{Numerical Experiments}
\label{sc:experiment}
We use two experiments to demonstrate the effectiveness of the proposed algorithms:
\begin{enumerate}
    \item The first experiment is based on a nonconvex quadratic program on which ALM with any bounded penalty parameter diverges~\cite[Proposition 1]{Wang19} but LiMEAL converges.
    \item The second experiment borrows a general quadratic program from~\cite[Sec. 6.2]{Zhang-Luo18} and LiMEAL outperforms \textit{Prox-iALM} suggested in \cite{Zhang-Luo18}.
%    \item The third experiment uses a sparse phase retrieval problem \cite{Li-phaseretrieval18}, which did not appear in the ALM literature before, to show the effectiveness of MEAD \eqref{alg:MEAD} (the alternating direction version of MEAL) over the multi-block ADMM~\cite{Wang19}.
\end{enumerate}
The source codes can be accessed at \url{https://github.com/JinshanZeng/MEAL}.

\subsection{ALM vs LiMEAL}
\label{sc:exp1}
Consider the following optimization problem from \cite[Proposition 1]{Wang19}:
\begin{align}\label{Eq:exp1}
    \min_{x,y\in \mathbb{R}} \ x^2-y^2,
    \quad \text{subject to} \  x=y, \ x\in [-1,1].
\end{align}
ALM with any bounded penalty parameter $\beta$ diverges on this problem. By Theorem \ref{Theorem:Convergence-LiMEAL} and Proposition \ref{Proposition:globalconv-LiMEAL}, LiMEAL converges exponentially fast since its augmented Lagrangian is a K{\L} function with an exponent of $1/2$. For both ALM and LiMEAL, we set the penalty parameter $\beta$ to 50. We set LiMEAL's proximal parameter $\gamma$ to $1/2$ and test three different values $\eta's$: $0.5, 1, 1.5$. The curves of objective $f(x^k,y^k)=(x^k)^2 - (y^k)^2$, constraint violation error $|x^k-y^k|$, multiplier sequences $\{\lambda^k\}$, and the norm of gradient of Moreau envelope in \eqref{Eq:stationary-LiMEAL}, which is the stationarity measure, are depicted in Fig. \ref{Fig:Exp1}.

Observe that ALM diverges: its multiplier sequence $\{\lambda^k\}$ oscillates between two distinct values (Fig. \ref{Fig:Exp1} (a)) and the constraint violation converges to a positive value (Fig. \ref{Fig:Exp1} (b)). Also observe that LiMEAL converges exponentially fast (Fig. \ref{Fig:Exp1} (c)--(e)) and achieves the optimal objective value of 0 in about 10 iterations (Fig. \ref{Fig:Exp1} (f)) with all $\eta$ values. This verifies Proposition \ref{Proposition:globalconv-LiMEAL}.

\begin{figure}[!t]
\begin{minipage}[b]{0.49\linewidth}
\centering
%\vspace{-.5cm}
\includegraphics*[scale=.43]{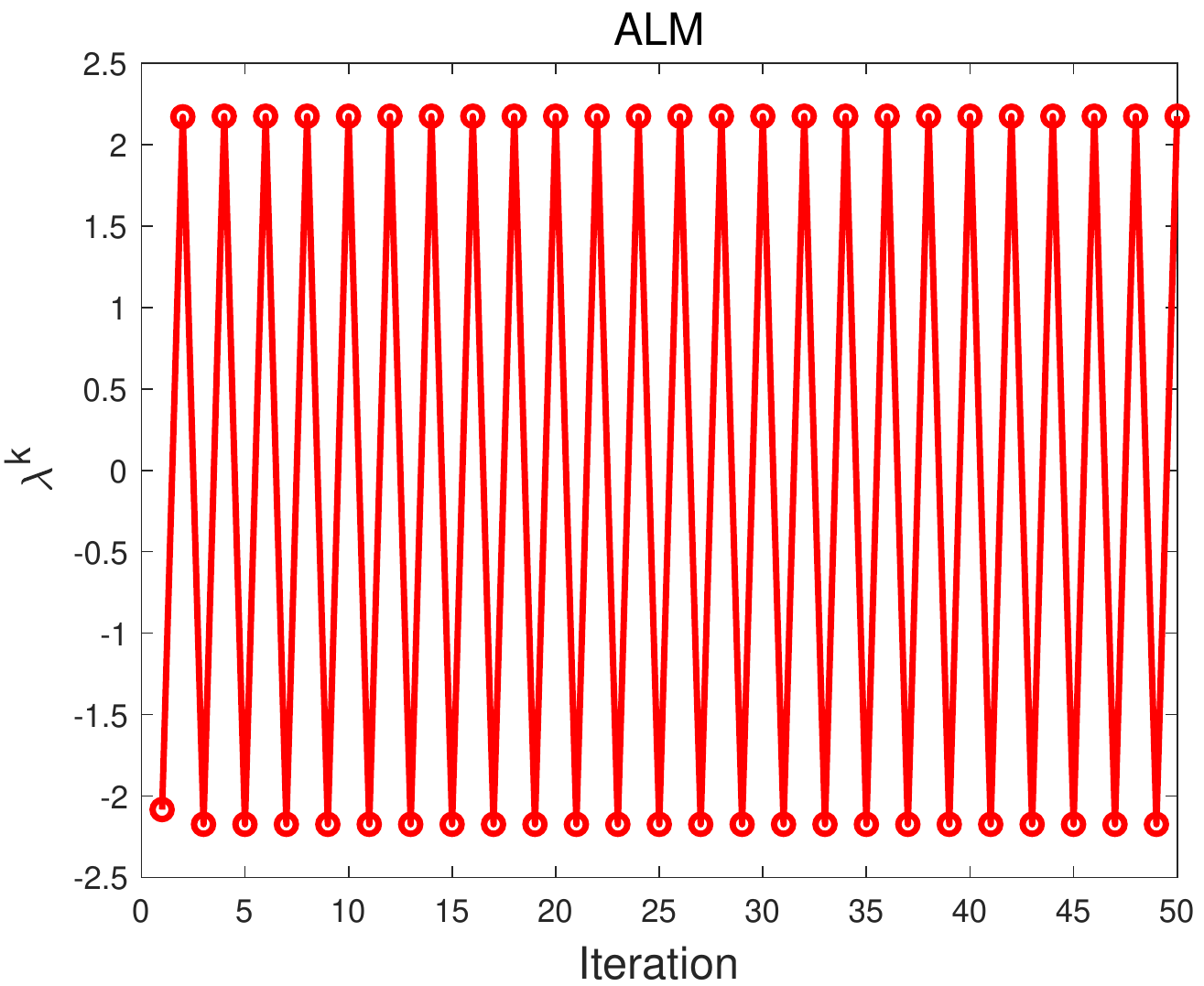}
%  \vspace{-1cm}
\centerline{{\small (a) Divergent $\{\lambda^k\}$ of ALM}}
\end{minipage}
\hfill
\begin{minipage}[b]{0.49\linewidth}
\centering
%\vspace{-.5cm}
\includegraphics*[scale=.43]{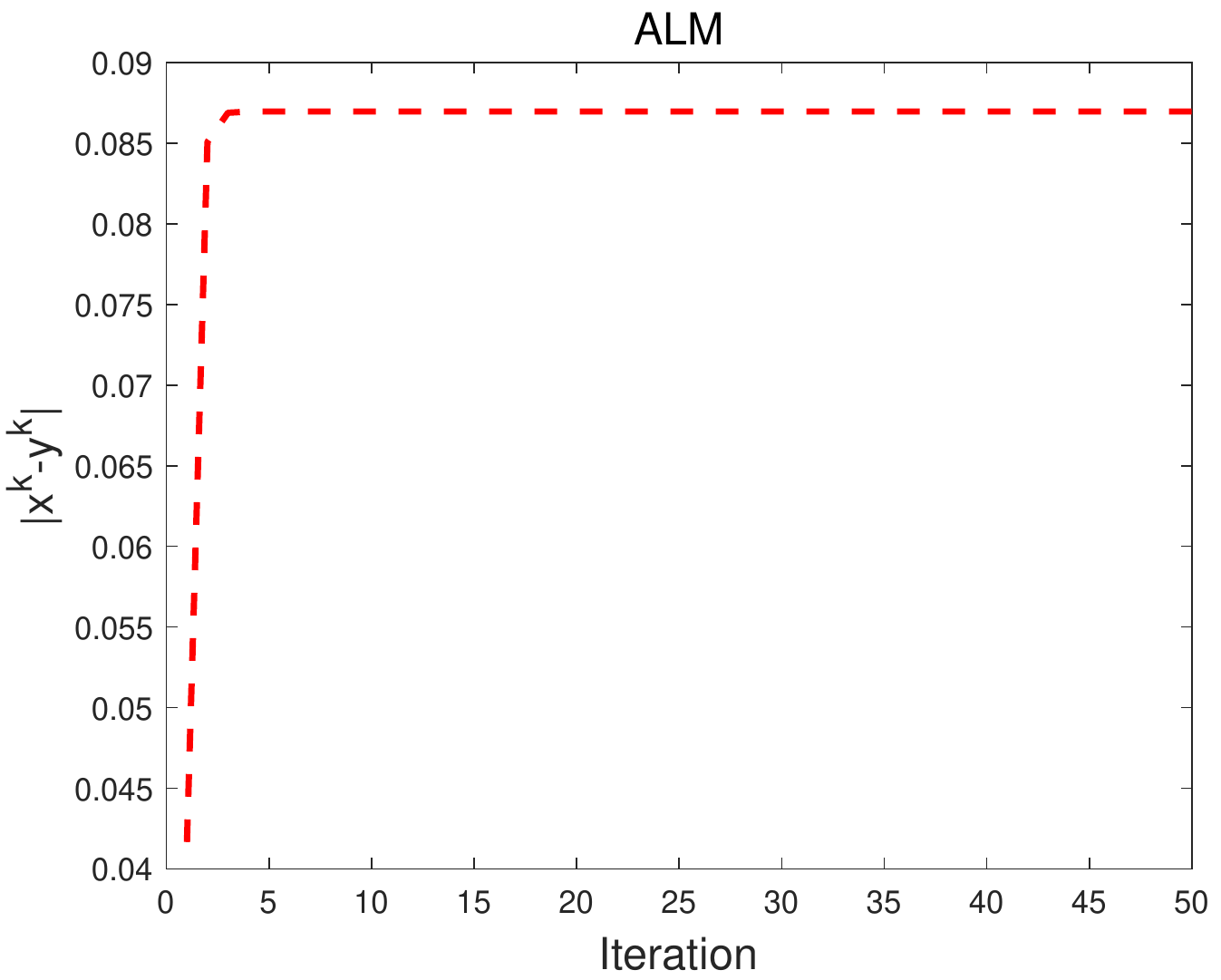}
%  \vspace{-1cm}
\centerline{{\small (b) Constraint violation of ALM}}
\end{minipage}
\hfill
\begin{minipage}[b]{0.49\linewidth}
\centering
%\vspace{-.5cm}
\includegraphics*[scale=.43]{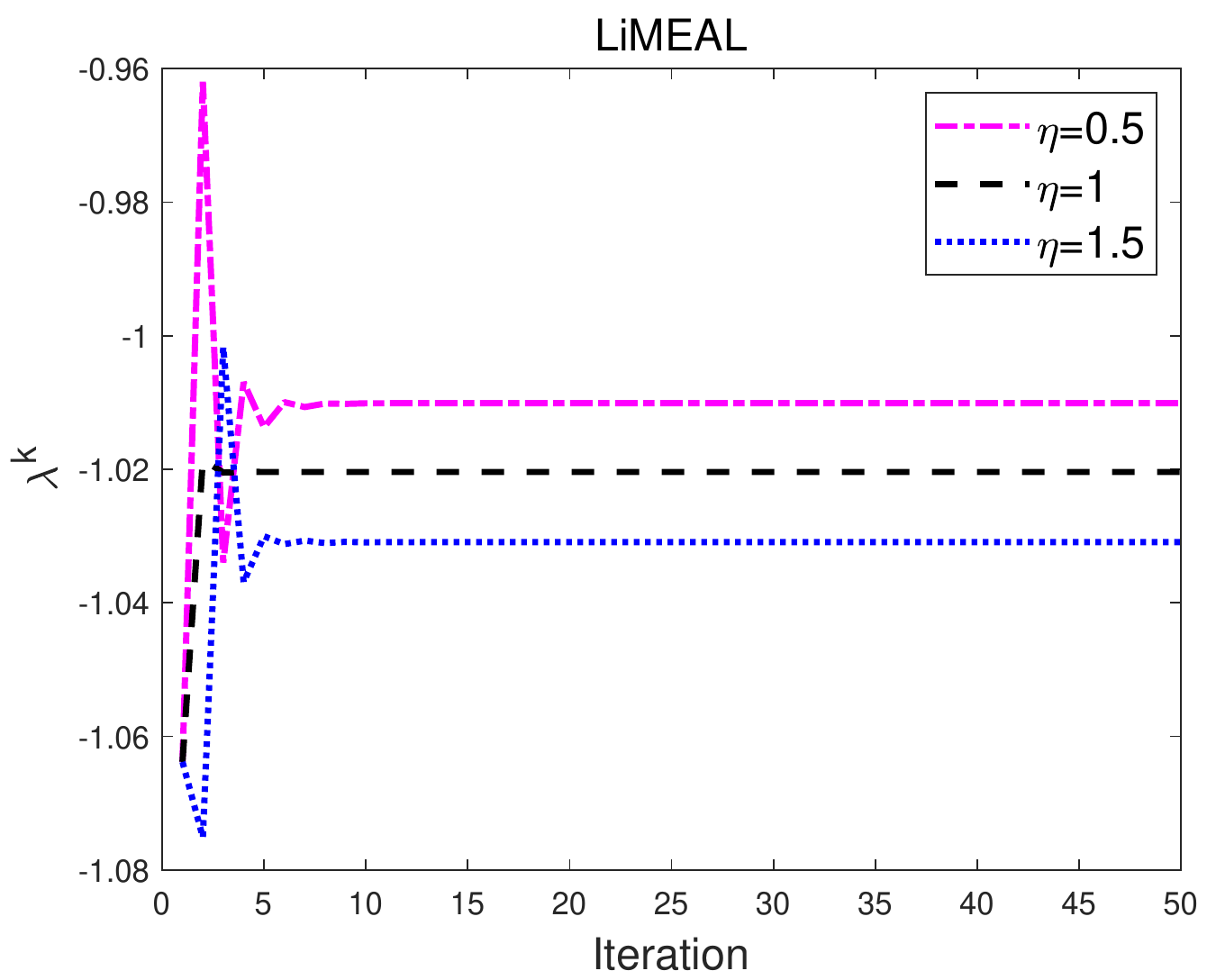}
%  \vspace{-1cm}
\centerline{{\small (c) Convergent $\{\lambda^k\}$ of LiMEAL}}
\end{minipage}
\hfill
\begin{minipage}[b]{0.49\linewidth}
\centering
%\vspace{-.5cm}
\includegraphics*[scale=.43]{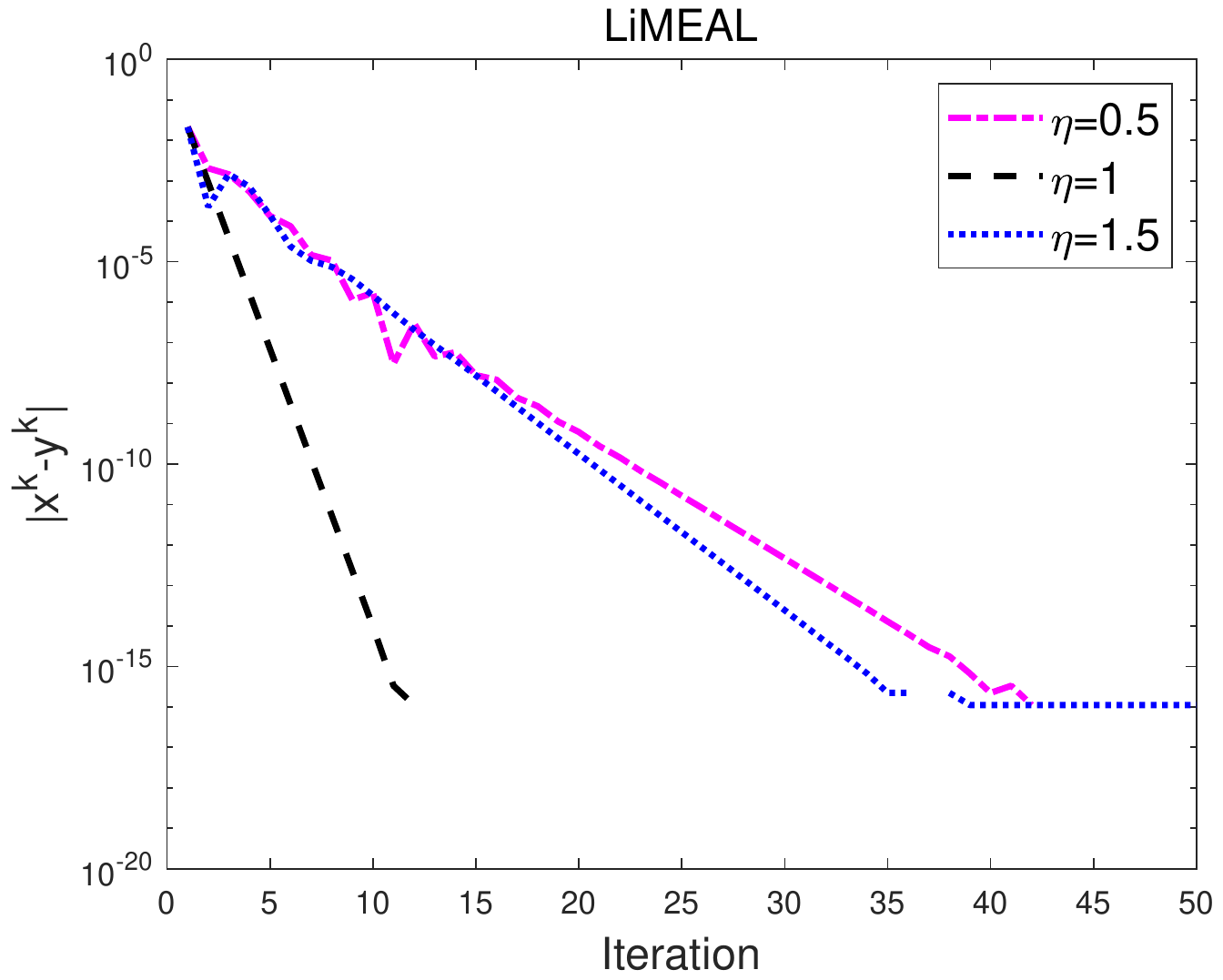}
%  \vspace{-1cm}
\centerline{{\small (d) Constraint violation of LiMEAL}}
\end{minipage}
\hfill
\begin{minipage}[b]{0.49\linewidth}
\centering
%\vspace{-.5cm}
\includegraphics*[scale=.43]{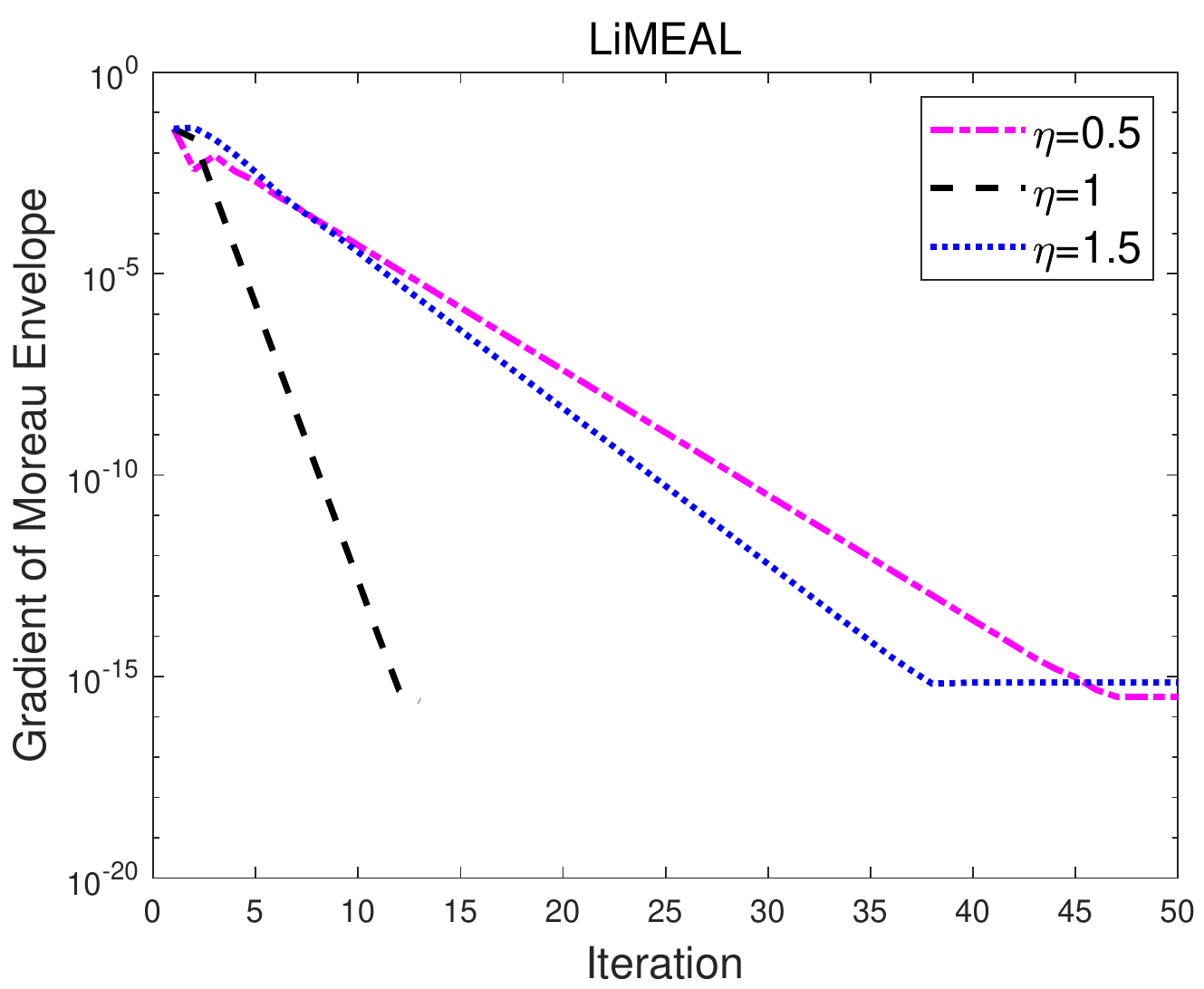}
%  \vspace{-1cm}
\centerline{{\small (e) convergence rate of LiMEAL}}
\end{minipage}
\hfill
\begin{minipage}[b]{0.49\linewidth}
\centering
%\vspace{-.5cm}
\includegraphics*[scale=.43]{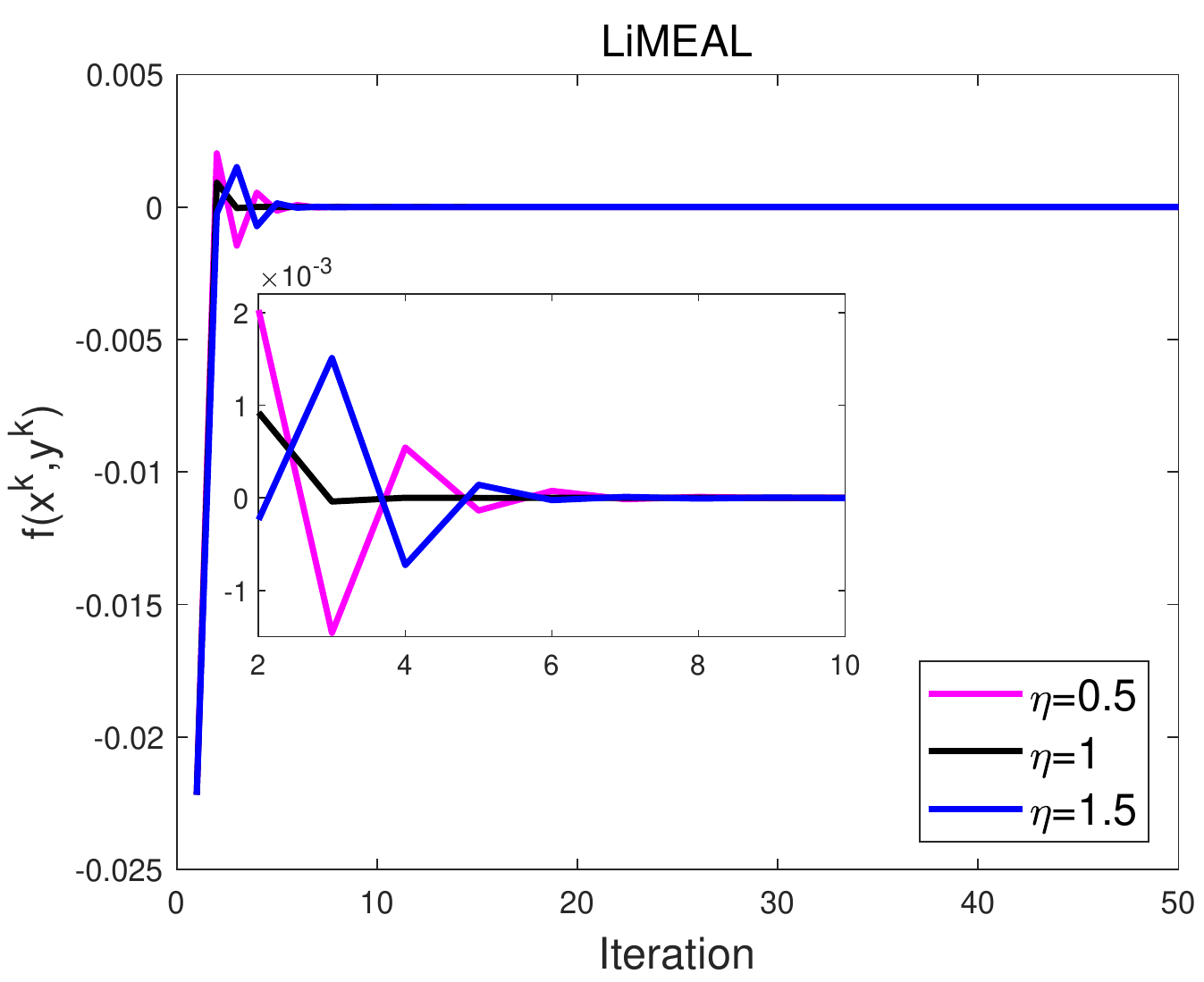}
%  \vspace{-1cm}
\centerline{{\small (f) objective sequence of LiMEAL}}
\end{minipage}
\caption{Apply ALM and LiMEAL to problem \eqref{Eq:exp1}. ALM diverges. LiMEAL quickly converges.
}
\label{Fig:Exp1}
\end{figure}

\subsection{Quadratic Programming}
\label{sc:exp2}
Consider the quadratic program with box constraints:
\begin{align}\label{Eq:exp2}
    \min_{x\in \mathbb{R}^n} \ \frac{1}{2} x^TQx + r^Tx \quad s.t. \quad Ax=b, \ \ell_i \leq x_i \leq u_i, \ i=1,\ldots,n,
\end{align}
where $Q\in \mathbb{R}^{n\times n}$, $r\in \mathbb{R}^n$, $A\in \mathbb{R}^{m\times n}$, $b\in \mathbb{R}^m$, and $\ell_i, u_i \in \mathbb{R}$, $i=1,\ldots, n$. Let ${\cal C}:=\{x: \ell_i \leq x_i \leq u_i, i=1,\ldots,n\}$.
%Following the update framework of LiMEAL \eqref{alg:LiMEAL} and with some easy computations, the specific updates of
Applying LiMEAL yields: initialize $(x^0, z^0,\lambda^0)$, $\gamma>0$, $\eta \in (0,2)$ and $\beta>0$, for $k=0,1,\ldots,$ run
\begin{equation*}
%\label{alg:LiMEAL-QP}
\mathrm{(LiMEAL)} \quad
\left\{
\begin{array}{l}
\tilde{x}^k = (\beta A^TA+\gamma^{-1}{\bf I}_n)^{-1}(\gamma^{-1}z^k+\beta A^Tb-r-Qx^k-A^T\lambda^k),\\
x^{k+1} = \mathrm{Proj}_{\cal C}(\tilde{x}^k),\\
z^{k+1} = z^k - \eta(z^k-x^{k+1}),\\
\lambda^{k+1} = \lambda^k + \beta_k (Ax^{k+1}-b).
\end{array}
\right.
\end{equation*}
%where $\mathrm{Proj}_{\cal C}(x):=\argmin_{\bar{x}\in {\cal C}} \|\bar{x}-x\|$ denotes the projection of $x$ onto the box constraint set ${\cal C}$ with the $i$-th component of the projection $[\mathrm{Proj}_{\cal C}(x)]_i$ being $\ell_i$ if $x_i\leq \ell_i$, and $u_i$ if $x_i\geq u_i$, and $x_i$ itself otherwise, ${\bf I}_n$ represents an identity matrix of size $n$.
Applying \textit{Prox-iALM} from~\cite[Algorithm 2.2]{Zhang-Luo18} yields:
%as a competitor. Following \cite[Algorithm 2.2]{Zhang-Luo18}, the update of Prox-iALM can be described as follows:
initialize $(x^0, z^0,\lambda^0)$, parameters $\beta, p, \alpha, s, \eta>0$, for $k=0,1,\ldots,$ run
\begin{equation*}
%\label{alg:Prox-iALM-QP}
\mathrm{(Prox-iALM)} \quad
\left\{
\begin{array}{l}
\bar{x}^k = (\beta A^TA+p{\bf I}_n)x^k +Qx^k+A^T\lambda^k-p z^k-(\beta A^Tb-r),\\
x^{k+1} = \mathrm{Proj}_{\cal C}(x^k-s\bar{x}^k),\\
z^{k+1} = z^k - \eta(z^k-x^{k+1}),\\
\lambda^{k+1} = \lambda^k + \beta_k (Ax^{k+1}-b).
\end{array}
\right.
\end{equation*}
When $\eta=1$, then Prox-iALM reduces to  \textit{Algorithm 2.1} in \cite{Zhang-Luo18}, which we name \textit{iALM}.

The experimental settings are similar to~\cite[Sec. 6.2]{Zhang-Luo18}: set $m=5, n=20$, generate the entries of $Q$, $A$, $b$, and $\tilde{x}$ by sampling from the uniform distribution, and set $b=A\tilde{x}$. For LiMEAL, we set $\beta = 50, \gamma=\frac{1}{2\|Q\|_2}$ and test three values of $\eta's$: $0.5, 1, 1.5$. For Prox-iALM, we use the parameter settings in \cite[Sec. 6.2]{Zhang-Luo18}: $p = 2\|Q\|_2, \beta = 50, \alpha = \frac{\beta}{4}, s = \frac{1}{2(\|Q\|_2+p+\beta \|A\|_2^2)}$. Moreover, we test two values of $\eta's$: $1$ and $0.5$ for Prox-iALM. Prox-iALM with $\eta=1$ reduces to iALM. The curves of the objective sequence, $\|Ax^k-b\|$, $\|x^{k+1}-z^k\|$ and the norm of gradient of the Moreau envelope are depicted in Fig. \ref{Fig:exp2}.
We observe that LiMEAL converges faster than both iALM and Prox-iALM. By Fig. \ref{Fig:exp2}(d), LiMEAL converges exponentially fast with all three values of $\eta's$. These results verify the results in Proposition \ref{Proposition:globalconv-LiMEAL}(b) since the augmented Lagrangian of problem \eqref{Eq:exp2} is a K{\L} function with an exponent of $1/2$. % (see, \cite{Zhang-Luo18}).

\begin{figure}[!t]
\begin{minipage}[b]{0.49\linewidth}
\centering
%\vspace{-.5cm}
\includegraphics*[scale=.43]{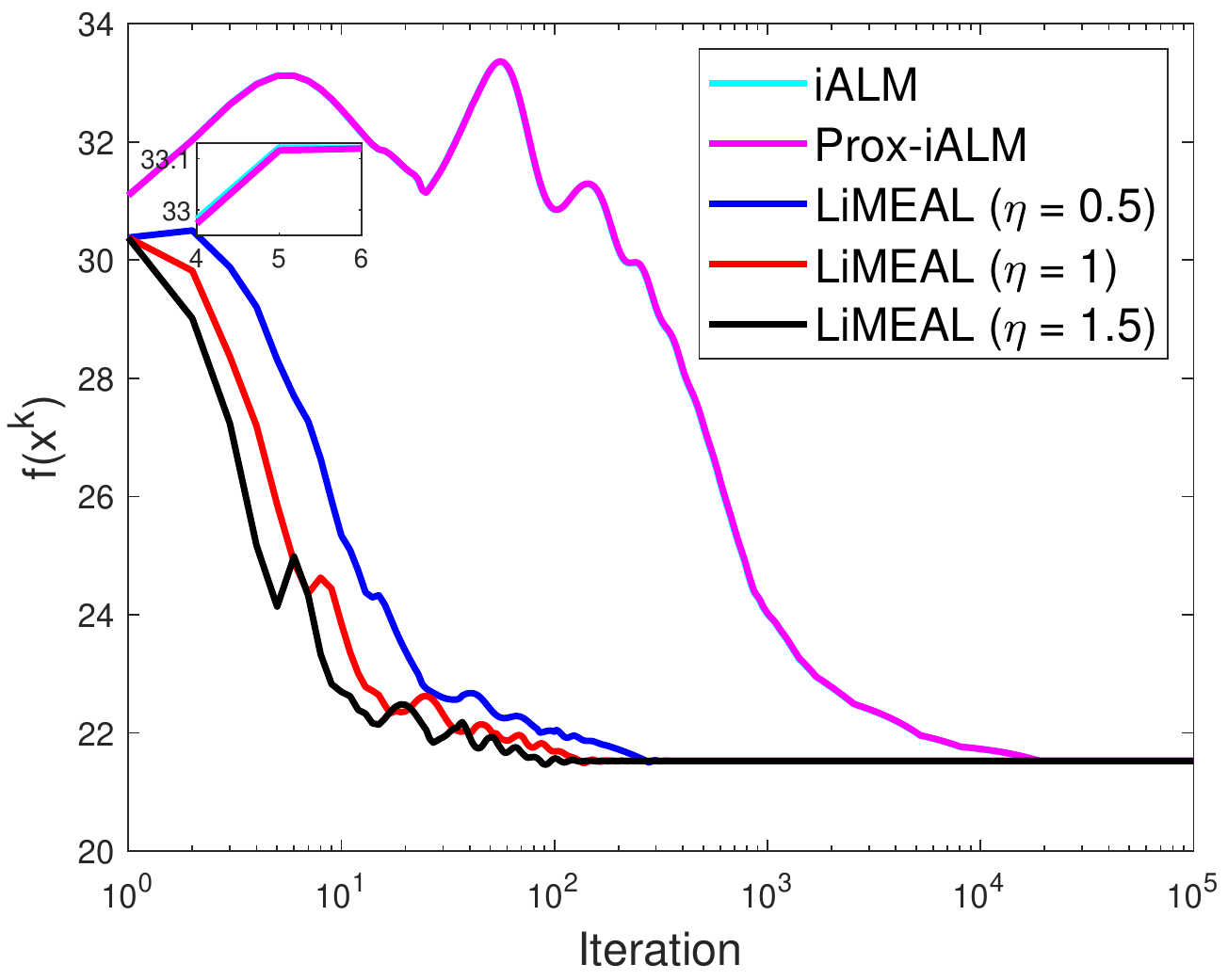}
%  \vspace{-1cm}
\centerline{{\small (a) Objective sequence}}
\end{minipage}
\hfill
\begin{minipage}[b]{0.49\linewidth}
\centering
%\vspace{-.5cm}
\includegraphics*[scale=.43]{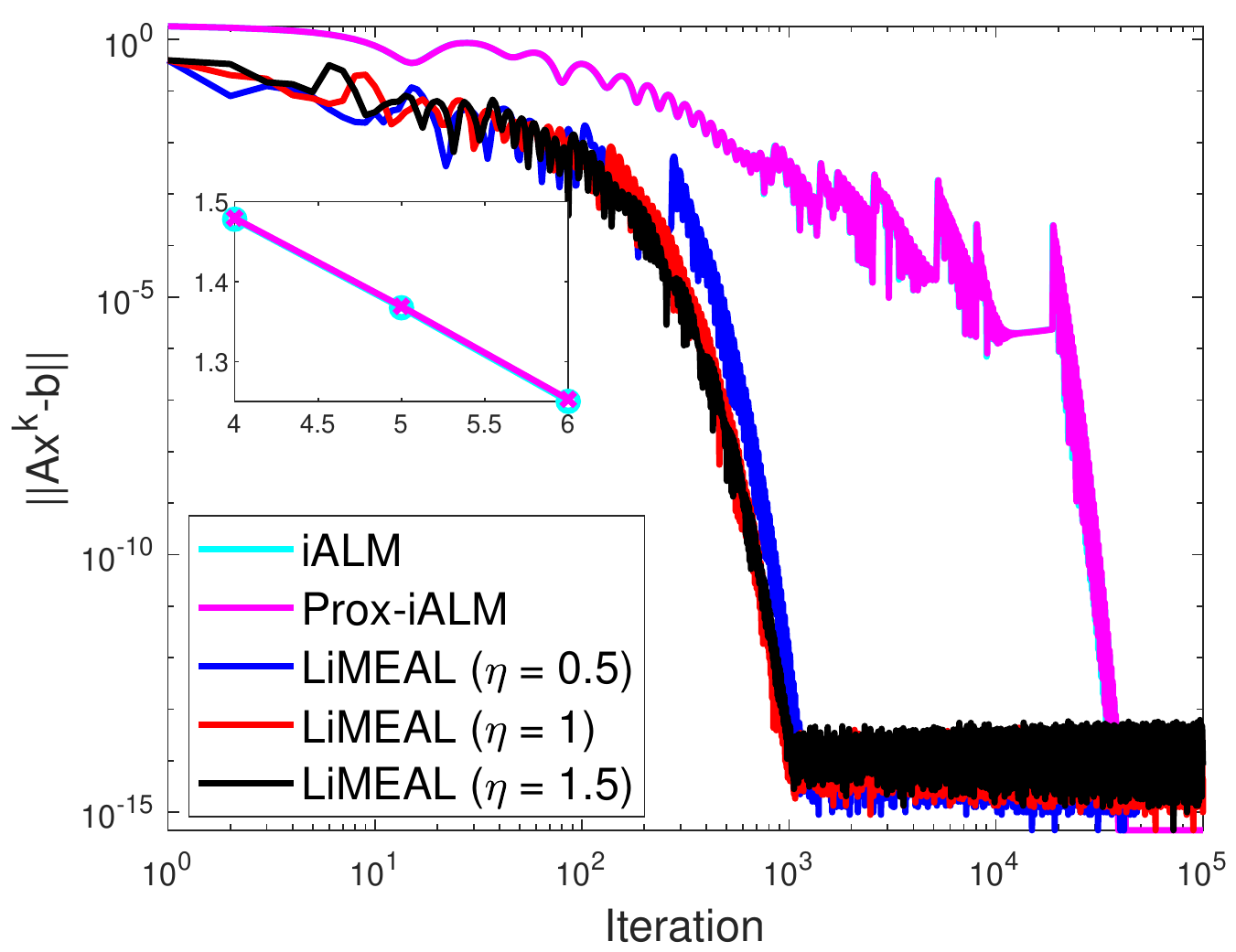}
%  \vspace{-1cm}
\centerline{{\small (b) $\|Ax^k-b\|$}}
\end{minipage}
\hfill
\begin{minipage}[b]{0.49\linewidth}
\centering
%\vspace{-.5cm}
\includegraphics*[scale=.43]{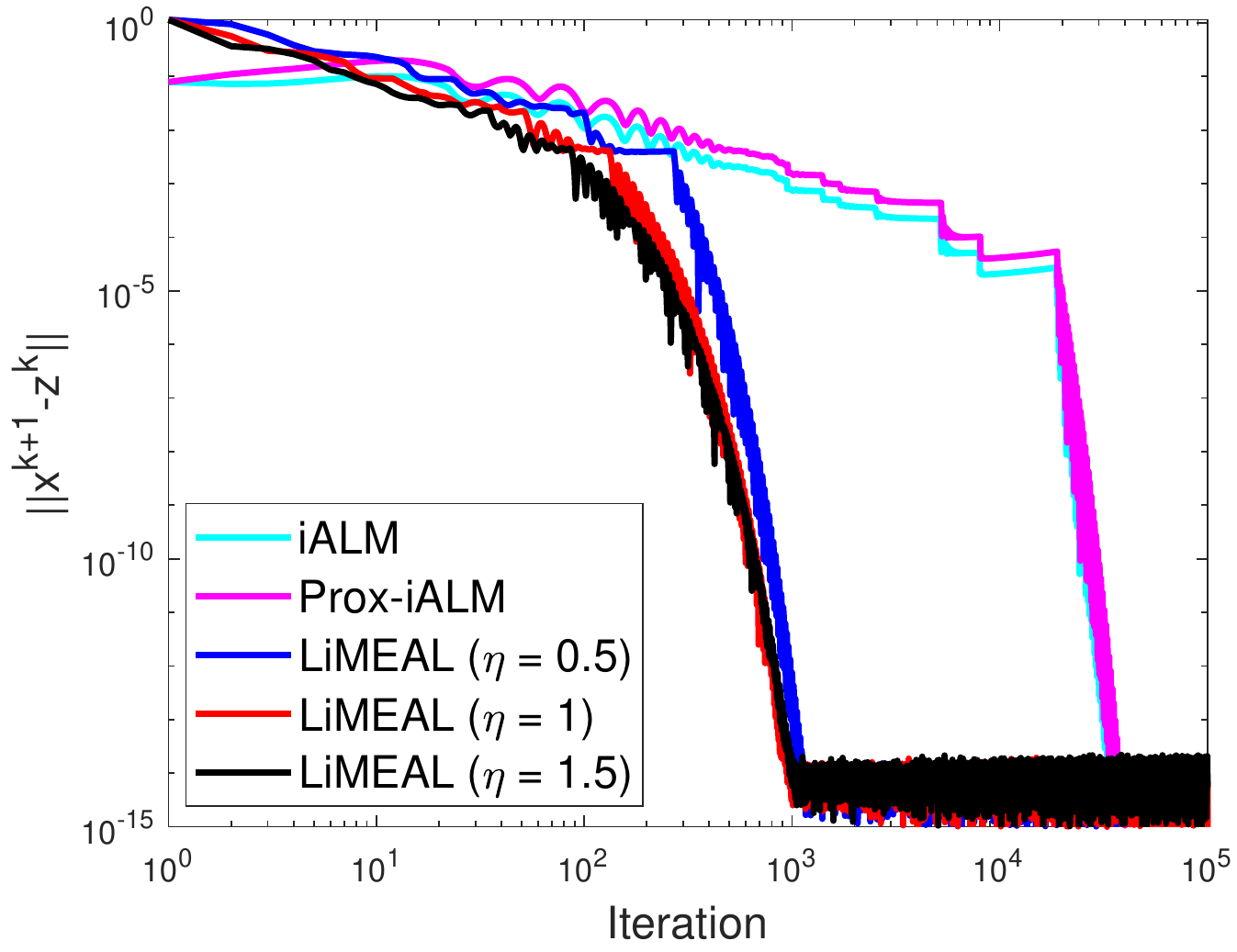}
%  \vspace{-1cm}
\centerline{{\small (c) $\|x^{k+1}-z^k\|$}}
\end{minipage}
\hfill
\begin{minipage}[b]{0.49\linewidth}
\centering
%\vspace{-.5cm}
\includegraphics*[scale=.43]{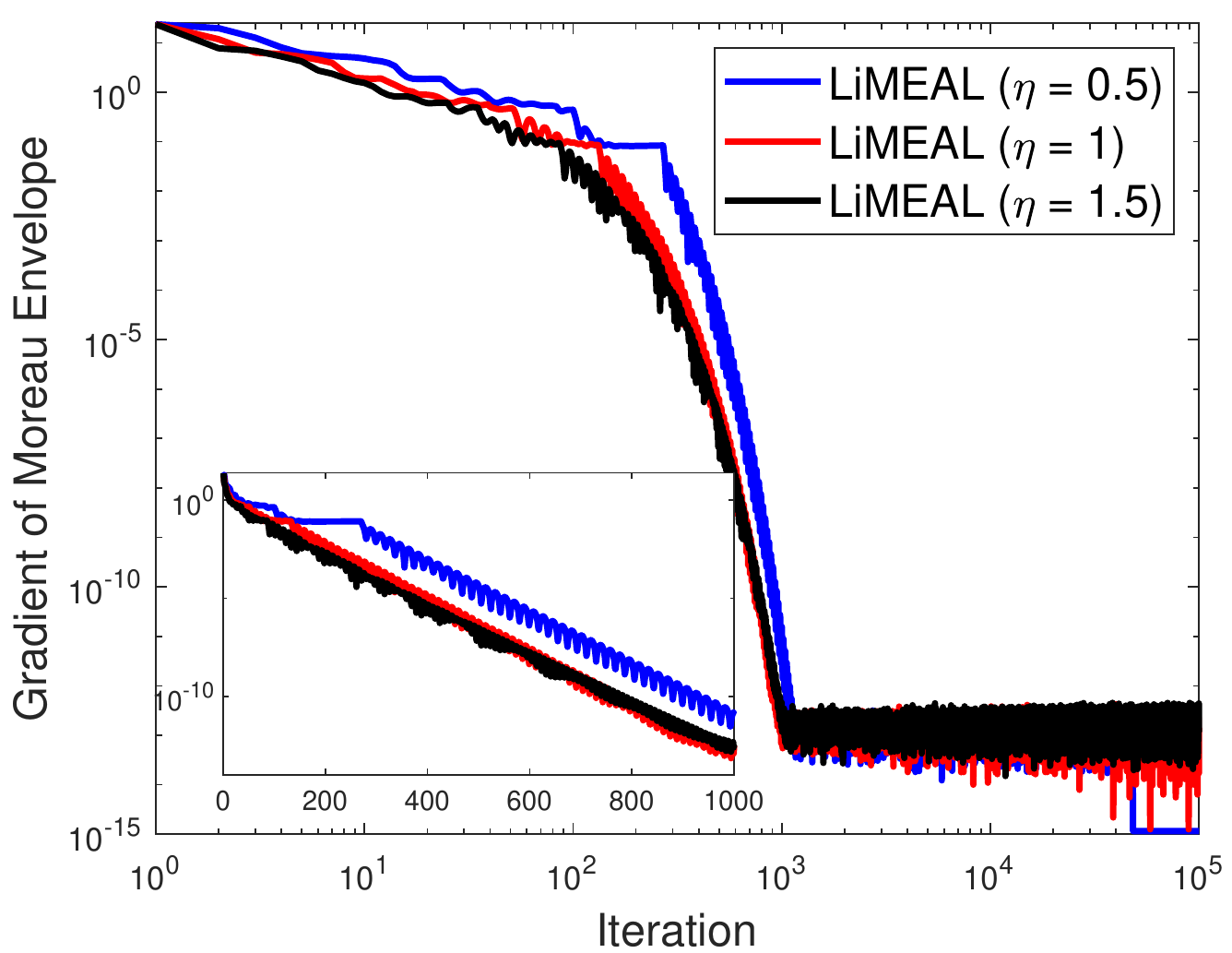}
%  \vspace{-1cm}
\centerline{{\small (d) Convergence rates of LiMEAL}}
\end{minipage}
\caption{Performance of LiMEAL and Prox-iALM for the quadratic programming problem \eqref{Eq:exp2}.
}
\label{Fig:exp2}
\end{figure}

\section{Conclusion}
\label{sc:conclusion}

This paper suggests a Moreau envelope augmented Lagrangian (MEAL) method for the linearly constrained weakly convex optimization problem.
By leveraging the \textit{implicit smoothing property} of Moreau envelope, the proposed MEAL generalizes the ALM and proximal ALM to the nonconvex and nonsmooth case.
To yield an $\varepsilon$-accurate first-order stationary point, the iteration complexity of MEAL is $o(\varepsilon^{-2})$ under the \textit{implicit Lipschitz subgradient} assumption and ${\cal O}(\varepsilon^{-2})$ under the \textit{implicit bounded subgradient} assumption.
The global convergence and rate of MEAL are also established under the further Kurdyka-{\L}ojasiewicz inequality.
Moreover, an inexact variant (called \textit{iMEAL}), and a prox-linear variant (called \textit{LiMEAL}) for the composite objective case are suggested and analyzed for different practical settings.
The convergence results established in this paper for MEAL and its variants are generally stronger than the existing ones, but under weaker assumptions.

One future direction of this paper is to get rid of the \textit{implicit Lipschitz subgradient} and \textit{implicit bounded subgradient} assumptions, which in some extent limit the applications of the suggested algorithms, though these two assumptions are respectively weaker than the \textit{Lipschitz differentiable} and \textit{bounded subgradient} assumptions commonly used in the literature.
Another direction is to generalize this work to the constrained problem with nonlinear constraints.
The third direction is to develop more practical variants of the proposed methods as well as establish their convergence results.
One possible application of our study is robustness and convergence of stochastic gradient descent in training parameters of structured deep neural networks such as deep convolutional neural networks \cite{Zhou20}, where linear constraints can be used to impose convolutional structures.
We leave them in our future work.

%\begin{acknowledgements}
%If you'd like to thank anyone, place your comments here
%and remove the percent signs.
%\end{acknowledgements}

% Authors must disclose all relationships or interests that
% could have direct or potential influence or impart bias on
% the work:
%
% \section*{Conflict of interest}
%
% The authors declare that they have no conflict of interest.

% BibTeX users please use one of
%\bibliographystyle{spbasic}      % basic style, author-year citations
\bibliographystyle{spmpsci}      % mathematics and physical sciences
\bibliography{sample}   % name your BibTeX data base

\begin{thebibliography}{10}
\providecommand{\url}[1]{{#1}}
\providecommand{\urlprefix}{URL }
\expandafter\ifx\csname urlstyle\endcsname\relax
  \providecommand{\doi}[1]{DOI~\discretionary{}{}{}#1}\else
  \providecommand{\doi}{DOI~\discretionary{}{}{}\begingroup
  \urlstyle{rm}\Url}\fi

\bibitem{Andreani07}
Andreani, R., Birgin, E.G., Martinez, J.M., Schuverdt, M.L.: On augmented
  lagrangian methods with general lower-level constraints.
\newblock SIAM J. Optim. \textbf{18}(4), 1286--1309 (2007)

\bibitem{Andreani08}
Andreani, R., Birgin, E.G., Martinez, J.M., Schuverdt, M.L.: Augmented
  lagrangian methods under the constant positive linear dependence constraint
  qualification.
\newblock Math. Program. \textbf{111}, 5--32 (2008)

\bibitem{Andreani10}
Andreani, R., Birgin, E.G., Martinez, J.M., Schuverdt, M.L.: Second-order
  negative-curvature methods for box-constrained and general constrained
  optimization.
\newblock Comput. Optim. Appl. \textbf{45}(2), 209--236 (2010)

\bibitem{Andreani19}
Andreani, R., Fazzio, N., Schuverdt, M.L., Secchin, L.: A sequential optimality
  condition related to the quasi-normality constraint qualification and its
  algorithmic consequences.
\newblock SIAM J. Optim. \textbf{29}(1), 743--766 (2019)

\bibitem{Andreani18}
Andreani, R., Secchin, L., Silva, P.: Convergence properties of a second order
  augmented lagrangian method for mathematical programs with complementarity
  constraints.
\newblock SIAM J. Optim. \textbf{28}(3), 2574--2600 (2018)

\bibitem{Armand17}
Armand, P., Omheni, R.: A globally and quadratically convergent primal-dual
  augmented lagrangian algorithm for equality constrained optimization.
\newblock Optim. Methods Softw. \textbf{32}(1), 1--21 (2017)

\bibitem{Attouch-Bolte09}
Attouch, H., Bolte, J.: On the convergence of the proximal algorithm for
  nonsmooth functions involving analytic features.
\newblock Math. Program. \textbf{116}, 5--16 (2009)

\bibitem{Attouch13}
Attouch, H., Bolte, J., Svaiter, B.F.: Convergence of descent methods for
  semi--algebraic and tame problems: proximal algorithms, forward--backward
  splitting, and regularized gauss--seidel methods.
\newblock Math. Program. \textbf{137}, 91--219 (2013)

\bibitem{Bertsekas73}
Bertsekas, D.P.: Convergence rate of penalty and multiplier methods.
\newblock In: Proc. {IEEE} Conf. on Decision and Control), pp. 260--264.
  SanDiego, California (1973)

\bibitem{Bertsekas76}
Bertsekas, D.P.: On penalty and multiplier methods for constrained
  minimization.
\newblock SIAM J. Control Optim. \textbf{14}(2), 216--235 (1976)

\bibitem{Bertsekas82}
Bertsekas, D.P.: Constrained Optimization and Lagrange Multiplier Methods.
\newblock Academic Press, London (1982)

\bibitem{Bian15}
Bian, W., Chen, X., Ye, Y.: Complexity analysis of interior point algorithms
  for non-lipschitz and nonconvex minimization.
\newblock Math. Program. \textbf{149}(1), 301--327 (2005)

\bibitem{Birgin05}
Birgin, E.G., Castillo, R., Martinez, J.M.: Numerical comparison of augmented
  lagrangian algorithms for nonconvex problems.
\newblock Comput. Optim. Appl. \textbf{31}, 31--56 (2005)

\bibitem{Birgin10}
Birgin, E.G., Floudas, C.A., Martinez, J.M.: Global minimization using an
  augmented lagrangian method with variable lower-level constraints.
\newblock Math. Program. \textbf{125}, 139--162 (2010)

\bibitem{Birgin12}
Birgin, E.G., Floudas, C.A., Martinez, J.M.: The boundedness of penalty
  parameters in an augmented lagrangian method with constrained subproblems.
\newblock Optim. Methods Softw. \textbf{27}(6), 1001--1024 (2012)

\bibitem{Birgin18}
Birgin, E.G., Haeser, G., Ramos, A.: Augmented lagrangians with constrained
  subproblems and convergence to second-order stationary points.
\newblock Comput. Optim. Appl. \textbf{69}(1), 51--75 (2018)

\bibitem{Birgin-book14}
Birgin, E.G., Martinez, J.M.: Practical Augmented Lagrangian Methods for
  Constrained Optimization, vol. vol. 10.
\newblock SIAM, Philadelphia (2014)

\bibitem{Birgin20}
Birgin, E.G., Martinez, J.M.: Complexity and performance of an augmented
  lagrangian algorithm.
\newblock Optim. Methods Softw.  (2020)

\bibitem{Bochnak-semialgebraic1998}
Bochnak, J., Coste, M., Roy, M.F.: Real algebraic geometry, vol.~36.
\newblock Springer Science \& Business Media, Berlin (1998)

\bibitem{Bolte-KL2007a}
Bolte, J., Daniilidis, A., Lewis, A.: The {\l}ojasiewicz inequality for
  nonsmooth subanalytic functions with applications to subgradient dynamical
  systems.
\newblock SIAM J. Optim. \textbf{17}(4), 1205--1223 (2007)

\bibitem{Bolte-KL2007b}
Bolte, J., Daniilidis, A., Lewis, A., Shiota, M.: Clark subgradients of
  stratifiable functions.
\newblock SIAM J. Optim. \textbf{18}(2), 556--572 (2007)

\bibitem{Bolte2014}
Bolte, J., Sabach, S., Teboulle, M.: Proximal alternating linearized
  minimization for nonconvex and nonsmooth problems.
\newblock Math. Program. \textbf{146}(1), 459--494 (2014)

\bibitem{Boyd04}
Boyd, S., Vandenberghe, L.: Convex Optimization.
\newblock Cambridge University Press, Cambridge, UK (2004)

\bibitem{Conn96}
Conn, A.R., Gould, N.I.M., Startenaer, A., Toint, P.L.: Convergence properties
  of an augmented lagrangian algorithm for optimization with a combination of
  general equality and linear constraints.
\newblock SIAM J. Optim. \textbf{6}, 674--703 (1996)

\bibitem{Conn91}
Conn, A.R., Gould, N.I.M., Toint, P.L.: A globally convergent augmented
  lagrangian algorithm for optimization with general constraints and simple
  bounds.
\newblock SIAM J. Numer. Anal. \textbf{28}, 545--572 (1991)

\bibitem{Conn00}
Conn, A.R., Gould, N.I.M., Toint, P.L.: Trust-Region Methods.
\newblock SIAM, Philadelphia (2000)

\bibitem{Curtis15}
Curtis, F.E., Jiang, H., Robinson, D.P.: An adaptive augmented lagrangian
  method for large-scale constrained optimization.
\newblock Math. Program. \textbf{152}(1), 201--245 (2015)

\bibitem{Davis-Drusvyatskiy19}
Davis, D., Drusvyatskiy, D.: Stochastic model-based minimization of weakly
  convex functions.
\newblock SIAM J. Optim. \textbf{29}(1), 207--239 (2019)

\bibitem{Deng-parallelADMM17}
Deng, W., Lai, M.J., Peng, Z., Yin, W.: Parallel multi-block admm with $o(1/k)$
  convergence.
\newblock J. Sci. Comput. \textbf{71}, 712--736 (2017)

\bibitem{Drusvyatskiy18}
Drusvyatskiy, D.: The proximal point method revisited.
\newblock SIAG/OPT Views and News \textbf{26}, 1--8 (2018)

\bibitem{Drusvyatskiy-Paquette19}
Drusvyatskiy, D., Paquette, C.: Efficiency of minimizing compositions of convex
  functions and smooth maps.
\newblock Math. Program. \textbf{178}, 503--558 (2019)

\bibitem{Fan-SCAD}
Fan, J., Li, R.: Variable selection via nonconcave penalized likelihood and its
  oracle properties.
\newblock J Am Stat Assoc \textbf{96}, 1348--1360 (2001)

\bibitem{Fernadez12}
Fernadez, D., Solodov, M.V.: Local convergence of exact and inexact augmented
  lagrangian methods under the second-order sufficient optimality condition.
\newblock SIAM J. Optim. \textbf{22}(2), 384--407 (2012)

\bibitem{Grapiglia-Yuan19}
Grapiglia, G.N., Yuan, Y.X.: {On the complexity of an augmented Lagrangian
  method for nonconvex optimization}.
\newblock ArXiv e-prints  (2019)

\bibitem{Haeser19}
Haeser, G., Liu, H., Ye, Y.: Optimality condition and complexity analysis for
  linearly-constrained optimization without differentiability on the boundary.
\newblock Math. Program. \textbf{178}, 263--299 (2019)

\bibitem{Hajinezhad-Hong19}
Hajinezhad, D., Hong, M.: Perturbed proximal primal-dual algorithm for
  nonconvex nonsmooth optimization.
\newblock Math. Program. \textbf{176}, 207--245 (2019)

\bibitem{Hestenes69}
Hestenes, M.R.: Multiplier and gradient methods.
\newblock J. Optim. Theory Appl. \textbf{4}, 303--320 (1969)

\bibitem{Hong17-Prox-PDA}
Hong, M., Hajinezhad, D., Zhao, M.M.: Prox-pda: The proximal primal-dual
  algorithm for fast dostributed nonconvex optimization and learning over
  networks.
\newblock In: Proc. of the 34th International Conference on Machine Learning
  (ICML), pp. 1529--1538. Sydney, Australia (2017)

\bibitem{Jiang19}
Jiang, B., Lin, T., Ma, S., Zhang, S.: Structured nonconvex and nonsmooth
  otpmization: algorithms and iteration complexity analysis.
\newblock Comput. Optim. Appl. \textbf{72}(1), 115--157 (2019)

\bibitem{Krantz2002-real-analytic}
Krantz, S., Parks, H.R.: A Primer of Real Analytic Functions (2nd Edition).
\newblock Birkhauser, Basel, Switzerland (2002)

\bibitem{Kurdyka-KL1998}
Kurdyka, K.: On gradients of functions definable in o-minimal structures.
\newblock Annales de l'institut Fourier \textbf{48}(3), 769--783 (1998)

\bibitem{Li-Pong-KLexponent18}
Li, G., Pong, T.K.: Calculus of the exponent of kurdyka-{\l}ojasiewicz
  inequality and its applications to linear convergence of first-order methods.
\newblock Found. Comput. Math. \textbf{18}, 1199--1232 (2018)

\bibitem{Lojasiewicz-KL1963}
{\L}ojasiewicz, S.: Une propri\'{e}t\'{e} topologique des sous-ensembles
  analytiques r\'{e}els. {I}n: {L}es \'{E}quations aux d\'{e}riv\'{e}es
  partielles.
\newblock \'{E}ditions du centre National de la Recherche Scientifique, Paris
  pp. 87--89 (1963)

\bibitem{Lojasiewicz-KL1993}
{\L}ojasiewicz, S.: Sur la geometrie semi-et sous-analytique.
\newblock Annales de l'institut Fourier \textbf{43}(5), 1575--1595 (1993)

\bibitem{Mordukhovich-2006}
Mordukhovich, B.S.: Variational analysis and generalized differentiation I:
  Basic Theory.
\newblock Springer-Verlag, New York (2006)

\bibitem{Moreau65}
Moreau, J.: Proximit\'{e} et dualit\'{e} dans un espace hilbertien.
\newblock Bull. Soc. Math. France \textbf{93}, 273--299 (1965)

\bibitem{Nocedal99}
Nocedal, J., Wright, S.J.: Numerical Optimization.
\newblock Springer-Verlag, New York (1999)

\bibitem{Nouiehed18}
Nouiehed, M., Lee, J.D., Razaviyayn, M.: {Convergence to second-order
  stationary for constrained non-convex optimization}.
\newblock ArXiv e-prints  (2018)

\bibitem{Nurminskii73}
Nurminskii, E.A.: The quasigradient method for the solving of the nonlinear
  programming problems.
\newblock Cybernetics \textbf{9}, 145--150 (1973)

\bibitem{ONeill20}
O'Neill, M., Wright, S.J.: A log-barrier newton-cg method for bound constrained
  optimization with complexity guarantees.
\newblock IMA J. Numer. Anal. \textbf{00}, 1--38 (2020)

\bibitem{Polyak-Tretyakov73}
Polyak, B.T., Tretyakov, N.V.: The method of penalty bounds for constrained
  extremum problems.
\newblock Zh. Vych Mat i Mat. Fiz, 13:34-46 = U.S.S.R. Computational
  Mathematics and Mathmatical Physics \textbf{13}, 42--58 (1973)

\bibitem{Powell69}
Powell, M.J.D.: A method for nonlinear constraints in minimization problems.
\newblock in Optimization, R. Fletcher, ed. Academic Press, London pp. 283--298
  (1969)

\bibitem{Rockafellar73-ALM}
Rockafellar, R.T.: The multiplier method of hestenes and powell applied to
  convex programming.
\newblock J. Optim. Theory Appl. \textbf{12}, 555--562 (1973)

\bibitem{Rockafellar76-PALM}
Rockafellar, R.T.: Augmented lagrangians and applications of the proximal point
  algorithm in convex programming.
\newblock Math. Oper. Res. \textbf{1}(2), 97--116 (1976)

\bibitem{Rockafellar-var97}
Rockafellar, R.T., Wets, R.J.B.: Variational Analysis.
\newblock Springer-Verlag, New York (1997)

\bibitem{Shiota1997}
Shiota, M.: Geometry of Subanalytic and Semialgebraic Sets (Progress in
  Mathematics).
\newblock Birkhauser, Basel, Switzerland (1997)

\bibitem{Tretykov73}
Tretykov, N.Y.: The method of penalty estimates of convex programming.
\newblock Economics and Mathematical Methods (Russian) \textbf{9}, 525--540
  (1973)

\bibitem{Wang19}
Wang, Y., Yin, W., Zeng, J.: Global convergence of admm in nonconvex nonsmooth
  optimization.
\newblock J. Sci. Comput. \textbf{78}, 29--63 (2019)

\bibitem{Xie-Wright19}
Xie, Y., Wright, S.J.: {Complexity of proximal augmented Lagrangian for
  nonconvex optimalization with nonlinear equality constraints}.
\newblock ArXiv e-prints  (2019)

\bibitem{Xu-Yin-BCD13}
Xu, Y., Yin, W.: A block coordinate descent method for regularized multiconvex
  optimizaton with applications to nonnegative tensor factorization and
  completion.
\newblock SIAM J. Imaging Sci. \textbf{6}(3), 1758--1789 (2013)

\bibitem{Yu-Li-Pong-KLexponent21}
Yu, P., Li, G., Pong, T.: Kurdyka-{\l}ojasiewicz exponent via inf-projection.
\newblock Found. Comput. Math.  (2021).
\newblock \doi{https://doi.org/10.1007/s10208-021-09528-6}

\bibitem{Zeng-BCD19}
Zeng, J., Lau, T.T.K., Lin, S.B., Yao, Y.: Global convergence of block
  coordinate descent in deep learning.
\newblock In: Proceedings of the 36th International Conference on Machine
  Learning (ICML). Long Beach, California, PMLR 97 (2019)

\bibitem{Zeng-ADMM19}
Zeng, J., Lin, S.B., Yao, Y., Zhou, D.X.: On admm in deep learning: Convergence
  and saturation-avoidance.
\newblock J Mach Learn Res \textbf{22}(199), 1--67 (2021)

\bibitem{Zeng-DGD18}
Zeng, J., Yin, W.: On nonconvex descentralized gradient descent.
\newblock IEEE Trans. Signal Process. \textbf{66}(11), 2834--2848 (2018)

\bibitem{Zhang-MCP}
Zhang, C.H.: Nearly unbiased variable selection under minimax concave penalty.
\newblock Ann. Stat. \textbf{38}(2), 894--942 (2010)

\bibitem{Zhang-Luo20}
Zhang, J., Luo, Z.Q.: {A global dual error bound and its application to the
  analysis of linearly constrained nonconvex optimization}.
\newblock ArXiv e-prints  (2020)

\bibitem{Zhang-Luo18}
Zhang, J., Luo, Z.Q.: A proximal alternating direction method of multiplier for
  linearly constrained nonconvex minimization.
\newblock SIAM J. Optim. \textbf{30}(3), 2272--2302 (2020)

\bibitem{Zhou20}
Zhou, D.X.: Universality of deep convolutional neural networks.
\newblock Appl. Comput. Harmonic Anal. \textbf{48}, 787--794 (2020)

\end{thebibliography}

% Non-BibTeX users please use
%\begin{thebibliography}{}
%%
%% and use \bibitem to create references. Consult the Instructions
%% for authors for reference list style.
%%
%\bibitem{RefJ}
%% Format for Journal Reference
%Author, Article title, Journal, Volume, page numbers (year)
%% Format for books
%\bibitem{RefB}
%Author, Book title, page numbers. Publisher, place (year)
%% etc
%\end{thebibliography}

\end{document}